\numberwithin{equation}{section}
\newtheorem{theorem}{Theorem}[section]
\newtheorem{proposition}[theorem]{Proposition}
\theoremstyle{definition}
\newtheorem{definition}[theorem]{Definition}
\theoremstyle{remark}
\newcommand{\Hom}{{\rm{Hom}}}
\renewcommand{\S}{{\mathcal{S}}}
\newcommand{\C}{{\mathbb{C}}}
\newcommand{\J}{{\mathcal{J}}}
\newcommand{\pdol}{\overline{\partial}}
\newcommand{\pd}{\partial}
\newcommand{\<}{{\langle}}
\renewcommand{\)}{{)}}
\renewcommand{\>}{{\rangle}}
\newcommand{\CS}{{\mathcal{S}}}
\newcommand{\CJ}{{\mathcal{J}}}
\newcommand{\CH}{{\mathcal{H}}}
\newcommand{\wedgeq}{{\wedge\kern-5pt\cdot}}
\newcommand{\tens}{\otimes}
\newcommand{\id}{{\rm id}}
\newcommand{\extd}{{\rm d}}
\newcommand{\del}{{\partial}}
\newcommand{\eps}{\epsilon}
\newcommand{\ev}{{\rm ev}}
\newcommand{\coev}{{\rm coev}}
\newcommand{\la}{{\triangleright}}
\newcommand{\dirac}{{  \slashed{D} }}
\newcommand{\Y}{{w}}
\renewcommand{\imath}{\mathrm{i}}
\begin{document}

\title{Spectral triples from bimodule connections and Chern connections} 
\keywords{}

\subjclass[2000]{Primary 81R50, 58B32, 83C57}
%\thanks{}

\author{Edwin Beggs \& Shahn Majid}
\address{Dept of Mathematics, Swansea University\\ Singleton Parc, Swansea SA2 8PP\\
{\ }+\\ Queen Mary, University of London\\
School of Mathematics, Mile End Rd, London E1 4NS, UK}

\email{s.majid@qmul.ac.uk, e.j.beggs@swansea.ac.uk (corresponding author)}
%\thanks{On sabbatical at Oxford during 2014}
%\thanks{}
%\date{February 2014}

\begin{abstract} We give a geometrical construction of Connes spectral triples or noncommutative Dirac operators $\dirac$ starting with a bimodule connection on the proposed spinor bundle. The theory is applied to the example of $M_2(\C)$, and also applies to the standard $q$-sphere and the $q$-disk with the right classical limit and all properties holding except for  $\CJ$ now being a twisted isometry.  We also describe a noncommutative 
Chern construction from holomorphic bundles which in the $q$-sphere case provides the relevant bimodule connection. \end{abstract}

\maketitle 

\section{Introduction}

A main difference between the well-known Connes approach to noncommutative geometry coming out of cyclic cohomology and the more constructive `quantum group' approach to noncommutative geometry lies in the attitude towards the Dirac operator. In Connes' approach this is defined axiomatically as an operator $\dirac$ on a Hilbert space which plays the role of Dirac operator on a spinor bundle and which is the starting point for Riemannian geometry, while in the quantum groups approach one builds up the geometry layer by layer starting with the differential algebra structure and often (but not necessarily) guided by quantum group symmetry, and arrives at $\dirac$ as an endpoint, normally after the Riemannian structure.  This approach also should contain $q$-deformed and quantum group-related examples but it is known that these may take us beyond Connes axioms if we want to have the correct classical limit. For example, for the standard $q$-sphere where the construction in \cite{DS-sphere} meets Connes axioms at some algebraic level but has spectral dimension 0 (the eigenvalues of the Dirac operator distributes in a typical manner for a zero dimensional manifold) and hence do not have the correct classical limit.

The present paper joins up these two approaches, namely we show how within the constructive approach we can naturally obtain spectral triples, at least up to issues of functional analysis, from a bimodule connection on a chosen vector bundle (thought of as a `spinor bundle'), having fixed a first order differential calculus for our space and a `Clifford action' $\la$ of its 1-forms on the bundle. The latter plays the role of the Clifford structure. Our construction is still quite general and we don't assume that the bundle is associated to a quantum frame bundle and connection induced by a quantum `spin' connection on it  as per the classical case, although that will be the case in the $q$-sphere example. 

An outline of the paper is as follows.  In Section~2.1, we recall Connes' axioms \cite{Con,ConMar} for a real spectral triple. Then in Section~2.2 we provide our main result, Theorem~\ref{sptripres}, which constructs examples of these from bimodule connections at an algebraic level, i.e.\ before worrying about adjoints. Section~2.3 establishes further constraints on the bimodule connection and inner product data to have $\dirac$ hermitian and $\J$ an (antilinear) isometry. Section~2.4 completes the general theory with an explanation of how varying the bimodule connection amounts to an inner fluctuation of the spectral triple in the sense of Connes\cite{ConMar}. 

One of the first ingredients in Section~2.2 is that the `commutativity condition' in Connes' axioms (see (4) in our recap below) can be seen as making the Hilbert space $\CS$ a bimodule \cite{LordDirac}, see also \cite{Barrmatrix}. However, our notion of bimodule connection means a single (say, left) connection $\nabla$ which admits a modified right-connection rule via a generalised braiding \cite{Mou,DV1,DV2,MMM,Sitarz,BegMa3,BegMa4,BegMa5,MaTao}. This allows for connections on tensor products of bimodules which will be critical for what follows and is very different from what is meant by `bimodule connection' in \cite{LordDirac}, which comes from \cite{CuntzQuillen} and uses two unrelated connections, one left and one right, on a bimodule. Classically, the latter reduces to defining two unrelated connections on the same bundle and is not what we need. Specifically, the lack of relation between the left and right structures means that the antilinear $\J$ operator for the reality condition for Connes' definition of Dirac operator could not be studied. In the context of what we mean by bimodule connections, another  main tool in Section~2.2 is a conjugate bimodule whereby
the antilinear map  $\J:\CS\to \CS$ is formulated in terms of a linear map 
 $j:\CS\to \overline{\CS}$. We use our previous work \cite{BegMa3} for the conjugate bimodule connection and related matters.  Although one could view the use of bar categories here as a bookkeeping device to keep explicit track of anti/linearity, it is essential for tensor product operations like $\id\tens j$ to make sense.  In the context of general monoidal categories, the idea of bar category can be less trivial \cite{BegMa2},  but it  is very useful even in the present case of complex vector spaces and someantilinear maps. 

Section~3 shows how the theory works on three examples. Section~3.1 covers the finite geometry of $2\times 2$ matrices $M_2(\C)$ as `coordinate algebra'. This is of course very well studied and we refer to \cite{Barrmatrix} for a recent treatment of spectral triples here. In our approach we start with a natural $*$-differential calculus $\Omega^1$ which is 2-dimensional over the algebra. As it happens we take the same bimodule for $\CS$, i.e.\ 2-spinors. We take a natural choice of $\la$ in this context and fixing this data we find a unique bimodule connection that meets our requirements of Section~2. This results in a single spectral triple which we compute as $\dirac={1\over 2}\gamma^2\tens[\gamma^2,\ ]-{1\over 2}\gamma^1\tens[\gamma^1,\ ]$ where $\gamma^i=\imath\sigma^i$ in terms of Pauli matrices. The commutators are inner derivations or `vector fields' on $M_2(\C)$ and uniqueness means that fluctuations of this would entail a change of either the differential structure or the Clifford structure.  

Section~3.2 covers the $q$-sphere $\C_q[S^2]$ with the geometrically correct spin bundle $\CS=\CS_+\oplus\CS_-$ given by $q$-monopole sections of charges $\pm 1$ as used in \cite{Ma:rieq}. This uses the standard 2D differential calculus coming from the 3D one\cite{Wor} on $\C_q[SU_2]$, 
 a Clifford action $\la$ given by the holomorphic structure introduced in \cite{Ma:rieq} and a $q$-monopole  principal connection \cite{BrzMaj:gau}, all of which led to a $q$-deformed $\dirac$ in a quantum frame bundle approach. Our new result is that the relevant covariant derivative on $\CS$ is in fact a bimodule connection and we find a $\CJ$ and inner product (given by the Haar integral) so that all the axioms (1)-(6) of a real spectral triple of dimension 2 are satisfied at the pre-functional analysis level except for one: we find that $\CJ$ is necessarily not an isometry but some kind of twisted $q$-isometry in the sense
\[ \<\!\<\J(\phi),\J(\psi)\>\!\>=q^{\pm 1} \<\!\<\varsigma^{-1}(\psi),\phi\> \!\>,\quad\forall \phi,\psi\in \CS_\pm\]
where the brackets are the Hilbert space inner product and $\varsigma$ is the automorphism that makes the Haar integral a twisted trace in the sense of \cite{Murphy}. We identified $\CS_\pm$ with degree $\mp1$ subspaces of $\C_q[SU_2]$ under the $U(1)$ action of the quantum principal bundle. More precisely, we obtain a 1-parameter family of $\dirac$ where a parameter $\beta$ extends  the Clifford action from the canonical choice $\beta=1$  in \cite{Ma:rieq}. Our construction is different from another attempt at the $q$-sphere Dirac operator with 2D spinor space \cite{DLPS-sphere}, where the `first order condition' (see (6) in our recap below) had to be weakened to hold up to compact operators, which is not our case. Section~3.3 is our final example, the quantum disk $\C_q[D]$ as in \cite{klilesdisk}, where we find again that everything works up to completions to give a dimension 2 spectral triple, for our choice of bimodule connection, aside from $\CJ$ being required to be a twisted isometry. The Clifford structure is similar the the $q$-sphere case and we again obtain a moduli space of examples as we vary a real parameter. 

Section~4 returns to the general theory with a noncommutative  framework for  holomorphic bundles  and their associated Chern connections along the lines of \cite{Cherncomplex}. At least in the nice case of the $q$-sphere this provides a more direct geometric route to the bimodule connection that we used for the $\dirac$ operator as well as the Levi-Civita connection in \cite{Ma:rieq} (i.e. without going through the frame bundle theory). This is computed along with the other examples in Section~5. 

We also note \cite{RieffelResist} which has a similar starting point of a differential algebra equipped with a metric and which contains some steps towards a more analytic treatment.

\section{Connes spectral triple}

This section starts with a short recap of Connes' axioms of a spectral triple and then proceeds to our main results about the construction of these from bimodule connections. 

\subsection{Real spectral triples at an algebraic level}

At the pre-functional analysis level, a real spectral triple in dimension $n$ mod 8 consists of certain data \cite{ConMar} which we list as follows:

(1a) A  Hilbert space $\CH$, with inner product $\<\!\<\ ,\ \>\!\>$ antilinear in the 1st argument. A faithful representation of the a $*$-algebra $A$ on $\CH$ such that
for all $a\in A$ and $\phi,\psi\in \CH$,
\[ \<\!\<a^*.\psi,\phi\>\!\>=\<\!\<\psi, a.\phi\>\!\>\]

(1b) Operators $\gamma, \dirac$ (both linear) and $\J$ (antilinear) on $\CH$ obeying $\<\!\<\J\psi, \J\phi\>\!\>=\<\!\<\phi,\psi\>\!\>$, $\gamma^*=\gamma$ and $\dirac^*=\dirac$. (For odd dimension we may take $\gamma$ to be the identity.) 

(2)  $\J^2=\eps$, $\J\gamma=\eps''\gamma \J$,  $\gamma^2=1$, $[\gamma, a]=0$

(3) $\dirac\gamma=(-1)^{n-1}\gamma \dirac$. 

(4) $[a, \J b\J^{-1}]=0$ for all $a,b\in A$

(5)  $\J \dirac=\eps' \dirac \J$

(6) $[[\dirac,a],\J b\J^{-1}]=0$ for all $a,b\in A$

\medskip

The signs $\eps,\eps',\eps''$ in $\{+1,-1\}$ are taken from a table according to $n$ mod 8:

\begin{center}
{\renewcommand{\arraystretch}{1.3}
\begin{tabular}{|c|c|c|c|c|c|c|c|c|}
\hline $n$ & 0 & 1 & 2 & 3 & 4 & 5 & 6 & 7 \\ 
\hline $\epsilon$ & 1 & 1 & $-1$ & $-1$ & $-1$ & $-1$ & 1 & 1 \\ 
\hline $\epsilon'$ & 1 & $-1$ & 1 & 1 & 1 & $-1$ & 1 & 1 \\ 
\hline $\epsilon''$ & 1 &  & $-1$ &  & 1 &  & $-1$ &  \\ 
\hline 
\end{tabular} 
}
\end{center}

We have grouped the axioms here into (1) that relate to the Hilbert space structure and ultimately to functional analysis, and the remainder which are
more algebraic.

\subsection{Construction of spectral triples from connections}
Take a star algebra $A$ with a star differential calculus
$(\Omega,\extd,\wedge)$, and a left $A$-module $\CS$. The first proposition is also the starting point of \cite{LordDirac}. 

\begin{proposition} \label{condd1}
Suppose we are given an antilinear map $\J:\CS\to \CS$ and a linear map $\gamma:\CS\to \CS$ satisfying properties (2) and (4). Then there is a bimodule structure on $\CS$, with right action,  for $\psi\in\CS$ and $a\in A$
\[ \psi.a= \J a^*\J^{-1}.\psi\ ,\]
and $\gamma$ is a bimodule map. 
\end{proposition}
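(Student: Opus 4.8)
The plan is to check in turn that the stated formula defines a right action, that this right action is compatible with the given left action so that $\CS$ becomes a bimodule, and finally that $\gamma$ intertwines both structures. Throughout I write $a$ also for the operator of left multiplication by $a$ on $\CS$, so that $\psi.a=\J a^*\J^{-1}.\psi$ means the composite operator $\J\circ a^*\circ\J^{-1}$ applied to $\psi$. The first thing I would record is that $\J$ is invertible: from $\J^2=\eps$ with $\eps=\pm1$ we get $\J^{-1}=\eps\J$, and since $\J$ is antilinear so is $\J^{-1}$. Hence $\J a^*\J^{-1}$, being antilinear followed by linear followed by antilinear, is a genuine $\C$-linear operator on $\CS$, which makes $\psi.a$ additive and $\C$-linear in $\psi$.

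Next I would verify the right-action axioms. Unitality $\psi.1=\psi$ is immediate from $1^*=1$ and $\J\J^{-1}=\id$. For associativity I would compute
\[(\psi.a).b=\J b^*\J^{-1}\J a^*\J^{-1}.\psi=\J b^*a^*\J^{-1}.\psi=\J(ab)^*\J^{-1}.\psi=\psi.(ab),\]
where the inner $\J^{-1}\J$ cancels, the left action is an algebra representation, and $(ab)^*=b^*a^*$ because $*$ is an antihomomorphism. Additivity and $\C$-linearity in $a$ follow in the same way from additivity of $\J$, with the conjugation introduced by $*$ being undone by the antilinearity of $\J$.

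For the bimodule property I would expand both sides of $(a.\psi).b=a.(\psi.b)$ to
\[(a.\psi).b=\J b^*\J^{-1}a.\psi,\qquad a.(\psi.b)=a\J b^*\J^{-1}.\psi,\]
so that equality for all $\psi$ is exactly $[a,\J b^*\J^{-1}]=0$. As $b$ ranges over $A$ so does $b^*$, and this is precisely hypothesis (4). This is the single step where an assumption beyond the formal algebra of $\J$ and $*$ enters, and it is the conceptual core of the statement: Connes' commutation axiom (4) is what guarantees the induced right action commutes with the left one.

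Finally, $\gamma$ is a left module map directly from $[\gamma,a]=0$ in (2); in odd dimension $\gamma=\id$ and the right-module claim is trivial, so assume $n$ even and $\eps''$ defined. For the right action I would use $\J\gamma=\eps''\gamma\J$, which also yields $\gamma\J=\eps''\J\gamma$ and $\gamma\J^{-1}=\eps''\J^{-1}\gamma$, together with $[\gamma,a^*]=0$ and $(\eps'')^2=1$, to slide $\gamma$ across:
\[\gamma\,\J a^*\J^{-1}=\eps''\J\gamma a^*\J^{-1}=\eps''\J a^*\gamma\J^{-1}=(\eps'')^2\J a^*\J^{-1}\gamma=\J a^*\J^{-1}\gamma,\]
whence $\gamma(\psi.a)=(\gamma\psi).a$. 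I do not anticipate a serious obstacle here, since the content is elementary algebra of $\J$, $\gamma$ and $*$. The only points demanding care are the bookkeeping of antilinearity—treating $\J a^*\J^{-1}$ as a linear operator and checking that the conjugations from $\J$, $\J^{-1}$ and $*$ cancel—and the invertibility of $\J$, both of which are settled at the outset.
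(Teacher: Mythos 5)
Your proof is correct and follows essentially the same route as the paper: axiom (4) gives commutation of the left and right actions, $[\gamma,a]=0$ gives the left module property, and the sliding computation $\gamma\,\J a^*\J^{-1}=\eps''\J\gamma a^*\J^{-1}=\eps''\J a^*\gamma\J^{-1}=\J a^*\J^{-1}\gamma$ is word-for-word the paper's argument for the right module property. The only difference is that you spell out the routine verification that $\psi.a=\J a^*\J^{-1}.\psi$ is a unital associative right action, which the paper leaves implicit.
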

\begin{proof} 
Property (4) states that the left and right actions commute. The condition $[\gamma,a]=0$ shows that $\gamma$ is a left module map. Then
\[
\gamma\J a^*\J^{-1}=\epsilon''\,\J \gamma a^*\J^{-1}=\epsilon''\,\J\,a^*\, \gamma \J^{-1}=\J\,a^*\J^{-1} \gamma \ ,
\]
so $[\gamma,\J a^*\J^{-1}]=0$, so $\gamma$ is a right module map.
\end{proof}

Now we assume the conditions for Proposition~\ref{condd1}, and examine some of the other conditions, given a particular construction for $\dirac$.
However first, we need to define a bimodule connection.

\begin{definition}
A left connection $\nabla_\CS:\CS\to\Omega^1\tens_A \CS$ on $\CS$ is a  linear map obeying the left Leibniz rule
\[
\nabla_\CS(a.\phi)=\extd a\tens\phi+a.\nabla_\CS(\phi)\ ,
\]
for $a\in A$ and $\phi\in \CS$. A left bimodule connection is a pair $(\nabla_\CS,\sigma_\CS)$ where $\nabla_\CS$ is a left connection and 
$\sigma_\CS:\CS\tens_A \Omega^1\to\Omega^1\tens_A \CS$ is a bimodule map obeying
\[
\sigma_\CS(\phi\tens\extd a)=\nabla_\CS(\phi.a)-\nabla_\CS(\phi).a\ .
\]
\end{definition}

Note that we have a single connection, with a left Leibniz rule and a modified right Leibniz rule. This is the definition of bimodule connection used in  \cite{Mou,DV1,DV2,MMM,Sitarz,BegMa3,BegMa4,BegMa5,MaTao} among others, and is defined in that manner so as to enable the tensor product of connections.

\begin{proposition}\label{Diracnabla}
Suppose that $(\nabla_\CS,\sigma_\CS)$ is a left bimodule connection on $\CS$, and that $\la:\Omega^1\tens_A \CS\to 
\CS$ is a left module map. If we define $\dirac=\la\circ\nabla_\CS$ then $[\dirac,a]\phi=\extd a\la \phi$. Then (6) is equivalent to $\la$ being a bimodule map, and
(5) is equivalent to 
\begin{align*}
\epsilon'\, \J[\dirac,a^*]\J^{-1}\phi=\la(\sigma_\CS(\phi\tens \extd a))\ .
\end{align*}
\end{proposition}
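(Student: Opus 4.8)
The plan is to treat the three assertions in turn, the crucial preliminary being Proposition~\ref{condd1}, which lets me replace Connes' operator $\J b\J^{-1}$ by the intrinsic right action it induces, namely $\psi.b^*=\J b\J^{-1}.\psi$. For the commutator formula I would simply feed the left Leibniz rule into $\dirac=\la\circ\nabla_\CS$: since $\la$ is a left module map,
\[\dirac(a.\phi)=\la\bigl(\extd a\tens\phi+a.\nabla_\CS\phi\bigr)=\extd a\la\phi+a.\dirac\phi,\]
so that $[\dirac,a]\phi=\extd a\la\phi$ as claimed, where $\extd a\la\phi$ denotes the Clifford action $\la(\extd a\tens\phi)$.

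For condition (6), I would substitute $\J b\J^{-1}=\rho_{b^*}$ (right multiplication by $b^*$) together with the commutator formula to obtain
\[[[\dirac,a],\J b\J^{-1}]\phi=\extd a\la(\phi.b^*)-(\extd a\la\phi).b^*.\]
This vanishes for all $a,b,\phi$ exactly when $\la$ intertwines the right actions on $\Omega^1\tens_A\CS$ and on $\CS$ for one-forms of the type $\extd a$; since the calculus is generated in degree one by $\extd A$ and $\la$ is already a left module map, the property propagates to a general one-form $a_0\extd a_1$, so (6) is equivalent to $\la$ being a bimodule map.

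For condition (5) the strategy is to compute $\dirac(\phi.a)$ in two ways. First, via the bimodule connection and the defining relation for $\sigma_\CS$ (and $\la$ a bimodule map from the previous step),
\[\dirac(\phi.a)=\la\nabla_\CS(\phi.a)=\la(\nabla_\CS(\phi).a)+\la(\sigma_\CS(\phi\tens\extd a))=(\dirac\phi).a+\la(\sigma_\CS(\phi\tens\extd a)).\]
Second, via the $\J$-twisted right action, using $\phi.a=\J a^*\J^{-1}\phi$, the identity $\J^{-1}=\eps\J$, and (5) in the form $\dirac\J=\epsilon'\J\dirac$,
\[\dirac(\phi.a)=\dirac\J a^*\J^{-1}\phi=\epsilon'\J\bigl([\dirac,a^*]+a^*\dirac\bigr)\J^{-1}\phi=\epsilon'\J[\dirac,a^*]\J^{-1}\phi+(\dirac\phi).a,\]
where the final term is simplified by a second use of (5), in the form $\dirac\J^{-1}=\epsilon'\J^{-1}\dirac$, together with $\J a^*\J^{-1}=\rho_a$. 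Comparing the two expressions cancels the common term $(\dirac\phi).a$ and produces the displayed identity.

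The converse runs this computation backwards, and here lies the main obstacle. Each step in the $\J$-computation above is reversible as pure linear algebra, so the displayed identity recovers the relation $\dirac\J=\epsilon'\J\dirac$ tested against the family of vectors $a^*\J^{-1}\phi$; the delicate point is upgrading this $a$-indexed family of equalities to the single operator identity (5). Writing $B=\J\dirac\J^{-1}$, which is linear since $\J$ is antilinear, the displayed identity is equivalent to $[\,\dirac-\epsilon' B,\rho_a\,]=0$ for all $a$, i.e.\ to $\dirac-\epsilon' B$ being right $A$-linear, whereas (5) is the stronger statement $\dirac-\epsilon' B=0$. Passing from the former to the latter needs non-degeneracy of the right module structure (so that the relevant elements exhaust $\CS$), and I expect this reduction, together with the careful antilinear bookkeeping of $\J$ throughout, to be where the real care is required.
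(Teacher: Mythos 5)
Your argument follows the paper's proof essentially step for step: the commutator formula from the left Leibniz rule, the identification of (6) with right $A$-linearity of $\la$ tested on elements $\extd a\tens\phi$ (which the paper dismisses as immediate, while you correctly add the remark that $\Omega^1$ is generated by $\extd A$ as a left module), and the two computations of $\dirac(\phi.a)$ for (5). The only genuine divergence is over the converse of the (5)-equivalence, and there your diagnosis is sharper than the paper's: the paper says only that ``the argument is clearly reversible'', whereas you correctly note that undoing the computation yields merely that $T=\dirac-\eps'\,\J\dirac\J^{-1}$ commutes with every right multiplication $\rho_a$, i.e.\ is a right module map, which is a priori weaker than $T=0$; since (5) was invoked at two separate points of the forward chain, reversing it does not isolate (5). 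That is a real observation. However, your proposed repair does not close the gap: surjectivity or non-degeneracy of the right action is irrelevant, because a nonzero right-module endomorphism can perfectly well commute with all $\rho_a$ --- for instance, replacing $\dirac$ by $\dirac+c\,\id$ with $c$ real leaves both $[\dirac,a^*]$ and $\la\sigma_\CS(\phi\tens\extd a)$ unchanged but violates (5) when $\eps'=-1$. So the converse requires an extra input (e.g.\ verifying $\J\dirac=\eps'\dirac\J$ directly, as is effectively done later through the hypotheses of Theorem~\ref{sptripres}), which neither your write-up nor the paper's one-line assertion supplies. Apart from this loose end, which you share with (and, to your credit, at least flag unlike) the published proof, your argument is correct and is the paper's argument.
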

\noindent\textbf{Proof:}\quad 
The first statement is given by
\[
\la\circ\nabla_\CS(a.\phi)=\la(\extd a\tens \phi+a.\nabla_\CS \phi)
=\extd a\la \phi+a.(\la)(\nabla_\CS \phi)\ ,
\]
as $\nabla_S$ is a connection, and the comment on bimodule maps is then immediate. If $\nabla_S$ is a bimodule connection we have $\dirac(\phi.a)=\la(\sigma_\CS(\phi\tens \extd a)+(\dirac\phi).a$. On the other hand, using (5) for the 2nd equality,
\begin{align*}
\dirac(\phi.a)=&\ \dirac(\J a^*\J^{-1}\phi)=\epsilon'\, \J \dirac a^*\J^{-1} \phi = \epsilon'\, \J[\dirac,a^*]\J^{-1}\phi+\epsilon'\, \J a^*\dirac\J^{-1}\phi \cr
=&\ \epsilon'\, \J[\dirac,a^*]\J^{-1}\phi+ \J a^*\J^{-1}\dirac \phi
= \epsilon'\, \J[\dirac,a^*]\J^{-1}\phi+ (\dirac \phi).a\ . \qquad\largesquare
\end{align*}
so the condition stated follows from (5). The argument is clearly reversible and (5) holds if the condition stated holds for all $a,\phi$. 

\medskip
Now we examine how to satisfy the conditions on  $\gamma$ in terms of the connection and the `Clifford action' $\la$:

\begin{proposition}
If there is a bimodule map $\gamma:\CS\to \CS$ with $\gamma^2=\id$, 
which intertwines the connection $\nabla_\CS$ (i.e.\ $\nabla_\CS\gamma=(\id\tens\gamma)\nabla_\CS$), and has
\[
\gamma\circ\la=-\la\circ(\id\tens\gamma):\Omega^1\tens_A \CS\to \CS\ ,\
\J\circ\gamma=\epsilon''\,\overline{\gamma}\circ \J:\CS\to\CS\ ,
\]
then $(\dirac,\J,\gamma)$ satisfies all the conditions which include $\gamma$ in (2-6) for an even dimensional spectral triple. 
\end{proposition}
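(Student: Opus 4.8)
The plan is to verify, one clause at a time, exactly those parts of axioms (2) and (3) that mention $\gamma$: namely $\gamma^2=1$, $[\gamma,a]=0$ and $\J\gamma=\eps''\gamma\J$ from (2), together with the grading condition $\dirac\gamma=(-1)^{n-1}\gamma\dirac$ of (3). The clauses (4), (5), (6) do not involve $\gamma$ and are already handled by Propositions~\ref{condd1} and~\ref{Diracnabla}, while the hermiticity $\gamma^*=\gamma$ sits in the Hilbert-space data (1b) and is deferred to the functional-analytic discussion; so only these four $\gamma$-clauses remain.

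Two of them I would dispose of immediately. The identity $\gamma^2=\id$ is assumed outright, which is $\gamma^2=1$. For $[\gamma,a]=0$ I would use that $\gamma$ is a bimodule map, hence in particular a left $A$-module map, so $\gamma(a.\phi)=a.\gamma(\phi)$ for all $a\in A$ and $\phi\in\CS$, which read as operators is precisely $\gamma a=a\gamma$.

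For the grading condition I would substitute $\dirac=\la\circ\nabla_\CS$ and push $\gamma$ through using the two compatibility hypotheses. The intertwining property gives $\dirac\gamma=\la\circ\nabla_\CS\circ\gamma=\la\circ(\id\tens\gamma)\circ\nabla_\CS$, while the sign rule $\gamma\circ\la=-\la\circ(\id\tens\gamma)$ gives $\gamma\dirac=\gamma\circ\la\circ\nabla_\CS=-\la\circ(\id\tens\gamma)\circ\nabla_\CS$. Comparing the two yields $\dirac\gamma=-\gamma\dirac$, and since we are in even dimension $n$ we have $(-1)^{n-1}=-1$, so this is exactly axiom (3). Note that $\sigma_\CS$ plays no role here.

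The one clause needing genuine care is $\J\gamma=\eps''\gamma\J$, because the hypothesis is stated in the bar-category form $\J\circ\gamma=\eps''\,\overline\gamma\circ\J$, in which $\J$ stands for the linear map $j:\CS\to\overline\CS$ that encodes the antilinear operator and $\overline\gamma$ is the image of $\gamma$ under the bar functor. The translation step is to write the antilinear operator as $\J=\mathrm{bar}^{-1}\circ j$, where $\mathrm{bar}:\CS\to\overline\CS$ is the canonical antilinear identification, and to use the defining naturality relation $\overline\gamma\circ\mathrm{bar}=\mathrm{bar}\circ\gamma$ of the bar functor. Substituting $j=\mathrm{bar}\circ\J$ into the hypothesis $j\circ\gamma=\eps''\,\overline\gamma\circ j$ and cancelling the bijection $\mathrm{bar}$ then returns the operator identity $\J\gamma=\eps''\gamma\J$. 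I expect this bookkeeping — tracking where antilinearity is absorbed into the bar functor so that $\overline\gamma$ may legitimately be replaced by $\gamma$ — to be the only real subtlety, the rest of the proposition being the direct substitutions above.
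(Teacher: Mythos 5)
Your proof is correct and follows essentially the same route as the paper: the only substantive computation in either case is the chain $\gamma\dirac=\gamma\circ\la\circ\nabla_\CS=-\la\circ(\id\tens\gamma)\circ\nabla_\CS=-\la\circ\nabla_\CS\circ\gamma=-\dirac\gamma$, with the remaining $\gamma$-clauses ($\gamma^2=1$, $[\gamma,a]=0$ from the bimodule-map hypothesis, and $\J\gamma=\eps''\gamma\J$ after unwinding $\overline{\gamma}$ through the canonical antilinear identification) being immediate, exactly as the paper asserts when it says there is "only one nontrivial thing to check".
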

\noindent\textbf{Proof:}\quad There is only one nontrivial thing to check,
\[
\gamma\,\dirac(\phi)=\gamma\circ(\la)\nabla_\CS\phi=-(\la)\circ(\id\tens\gamma)\nabla_\CS\phi=
-(\la)\circ\nabla_\CS\gamma\phi=-\dirac(\phi)\ .\quad\largesquare
\]

\medskip
At first sight it might seem that satisfying the conditions for $\J$ would be very similar to the case for $\gamma$. However, this is not the case. The problem is that $(\id\tens\J)\nabla_\CS$ is not even defined. In a tensor product over the complex numbers, we have
$\mathrm{i}\,\xi\tens \phi=\xi\tens \mathrm{i}\, \phi\in\Omega^1\tens_A \CS$. Now applying $(\id\tens\J)$ to this gives
$\mathrm{i}\,\xi\tens \J\phi=-\xi\tens \mathrm{i}\, \J\phi$, a contradiction unless both sides vanish. All this is before we actually look at the $\tens_A$ part, and find more problems in that elements of $A$ are multiplied on the wrong side. Basically, tensor products and antilinear maps do not mix. 
Our problems are resolved if we are more careful and use the 
 \textit{conjugate} of a bimodule.

\begin{definition}\cite{BegMa3} 
The conjugate of an $A$-bimodule $E$ is written $\overline{E}$, and is identical to $E$ as a set with addition. We denote an element of $\overline{E}$ by $\overline{e}$ where $e\in E$, so that they are not confused. The complex vector space structure, for $e,g\in E$ and $\lambda\in\C$ is
\[
\overline{e}+\overline{g}=\overline{e+g}\ ,\quad \lambda\,\overline{e}=\overline{\lambda^*\,e}\ .
\]
The $A$-bimodule structure is given by a change of side, for $a\in A$,
\[
a.\overline{e}=\overline{e.a^*}\ ,\quad \overline{e}.a=\overline{a^*.e}\ .
\]

For bimodules $E,F$, a bimodule map $\theta:E\to F$ gives another bimodule map $\overline{\theta}:\overline{E}\to \overline{F}$
by $\overline{\theta}(\overline{e})=\overline{\theta(e)}$.
There is also a well defined bimodule map $\Upsilon:\overline{E\tens_A F}\to \overline{F}\tens_A \overline{E}$ flipping the order, defined by, for $e\in E$ and $f\in F$,
given by
\[
\Upsilon(\overline{e\tens f})=\overline{f}\tens_A \overline{e}
\]
\end{definition}

Note that we \textit{do not use bar as a complex conjugation operation}, on elements it is purely a bookkeeping notation for the antilinear identity map. In fact, if we want to take the complex conjugate of $\lambda\in\C$, as above, we write it as $\lambda^*\in\C$ to avoid confusion. The alternative, as stated above, is to be incapable of incorporating antilinear maps into tensor products. With this notation, an antilinear map can be regarded as a linear map into the conjugate. The flip map $\Upsilon$ simply implements a change of order implicit in taking conjugates.

To illustrate this, we again consider the antilinear map $\J$, but mapping into the conjugate $\overline{\CS}$. 
Define a map $j:\CS\to \overline{\CS}$ by
$j(\phi)=\overline{\J\phi}$, and this is a linear bimodule map, as we now show, for $a\in A$ and $\phi\in\CS$,
\begin{align*}
&j(a.\phi)=\overline{\J(a.\phi)}=\overline{\J a\J^{-1}\J\phi}=\overline{\J(\phi).a^*}=a.\overline{\J(\phi)}=a.j(\phi)\ ,\cr
&j(\phi.a)=j(\J a^* \J^{-1}\phi)=\overline{\J^2 a^*\J^{-1}\phi}=\epsilon\, \overline{ a^*\J^{-1}\phi}=
\epsilon\, \overline{ \J^{-1}\phi}.a= \overline{ \J\phi}.a=j(\phi).a\ .
\end{align*}
The other antilinear map we will need is the star operation on $\Omega$, extending the star operation on $A$. We define the bimodule map $\star:\Omega\to \overline{\Omega}$ by $\star\,\xi=\overline{\xi^*}$ for $\xi\in\Omega$. 

Recall next that given a left bimodule connection $(\CS,\nabla_\CS,\sigma_\CS)$ where $\sigma_\CS$ is invertible, we have a canonical left bimodule connection $\nabla_{\overline{\CS}}$ on $\overline{\CS}$ given by \cite{BegMa3}
\begin{align} \label{riconbar}
\nabla_{\overline{\CS}}(\overline{\phi})=(\star^{-1}\tens\id)\Upsilon\,\overline{\sigma_\CS{}^{-1}\nabla_\CS\phi}\ .
\end{align}

The condition for $j$ to intertwine the left connections is $(\id\tens j)\nabla_\CS=\nabla_{\overline{\CS}}\,j$, or 
\begin{align} \label{jpreserves}
(\id\tens j)\nabla_\CS\phi=\nabla_{\overline{\CS}}\,j(\phi)=\nabla_{\overline{\CS}}(\overline{\J\phi})=
(\star^{-1}\tens\id)\Upsilon\,\overline{\sigma_\CS{}^{-1}\nabla_\CS(\J\phi)}\ .
\end{align}
The difference $(\id\tens j)\nabla_\CS-\nabla_{\overline{\CS}}\,j$
 is a left module map, so to check the difference is zero, it is enough to do so on a set of left generators for $\CS$. Using (\ref{jpreserves}) we can calculate
\begin{align*}
\overline{\dirac\J\phi}=&\ \overline{\la\,\nabla_\CS (\J\phi)}=\overline{\la\,\sigma_\CS}\Upsilon^{-1}(\star\tens j)\nabla_\CS\phi\ ,\cr
\overline{\J \dirac\phi}=&\ j\,\dirac\phi = j\,(\la)\,\nabla_\CS\phi\ .
\end{align*}
To satisfy property (5), we need $ j\,(\la)=\epsilon'\, \overline{\la\,\sigma_\CS}\Upsilon^{-1}(\star\tens j)$, which we can restate, using $\xi\in\Omega^1$ as
\begin{align}  \label{lapreserves}
\J(\xi\la\phi)=\epsilon'\, (\la)\sigma_\CS(\J\phi\tens\xi^*)\ .
\end{align}
The reader may complain that we have used antilinear maps in the tensor product in (\ref{lapreserves}), but we have used them in  \textit{both} positions with a swap, which is legal. As long as we keep up the bookkeeping, all conjugates and antilinear maps stay legal.

We summarise the above results in the following theorem, stated in bimodule language. Note that we have not yet discussed the Hilbert space structure, we only refer to conditions (2)-(6). We denote by $\mathrm{bb}$ the canonical identification
$s\mapsto \overline{\overline{s}}$ of a bimodule $\CS$ with its double conjugate.

\begin{theorem} \label{sptripres}
Suppose that $\CS$ is an $A$-bimodule and
$j:\CS\to\overline{\CS}$ a bimodule map obeying $\overline{j}\,j=\epsilon\,\mathrm{bb}:
\CS\to\overline{\overline{\CS}}$. 
Suppose that   $(\CS,\nabla_\CS,\sigma_\CS)$ is a left bimodule connection, where $\sigma_\CS$ is invertible, and that $(\id\tens j)\nabla_\CS=\nabla_{\overline{\CS}}\,j$
for $\nabla_{\overline{\CS}}$ the canonical left connection on ${\overline{\CS}}$. 
Suppose that $\la:\Omega^1\tens_A \CS\to 
\CS$ is a bimodule map obeying $ j\,(\la)=\epsilon'\, \overline{\la\,\sigma_\CS}\Upsilon^{-1}(\star\tens j)$. Then $\dirac=\la\circ\nabla:\CS\to \CS$ and
$\J:\CS\to\CS$ defined by $j(\phi)=\overline{\J(\phi)}$  satisfy conditions (2)-(6)
for an odd spectral triple.

If there is a bimodule map $\gamma:\CS\to \CS$ with $\gamma^2=\id$, 
which intertwines the connection $\nabla_\CS$, and has
\[
\gamma\circ\la=-\la\circ(\id\tens\gamma):\Omega^1\tens_A \CS\to \CS\ ,\
j\circ\gamma=\epsilon''\,\overline{\gamma}\circ j:\CS\to\overline{\CS}\ ,
\]
then $(\dirac,\J,\gamma)$ satisfies the conditions (2)-(6) for an even spectral triple. 
\end{theorem}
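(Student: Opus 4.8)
The plan is to assemble Theorem~\ref{sptripres} from the three preceding propositions, which have already done almost all of the work; the theorem is essentially their bimodule-language repackaging. First I would observe that Proposition~\ref{condd1} runs in reverse here: rather than starting from $\J,\gamma$ satisfying (2) and (4), we are handed a bimodule $\CS$ and a bimodule map $j:\CS\to\overline{\CS}$, and we recover the antilinear $\J:\CS\to\CS$ via $j(\phi)=\overline{\J\phi}$. The hypothesis $\overline{j}\,j=\epsilon\,\mathrm{bb}$ unwinds, using $\overline{j}(\overline{\phi})=\overline{j(\phi)}=\overline{\overline{\J\phi}}$ and the identification $\mathrm{bb}$, to exactly $\J^2=\epsilon$, which is the $\J$-part of (2). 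The fact that $j$ is a bimodule map between $\CS$ and $\overline{\CS}$ encodes, via the conjugate bimodule structure $a.\overline{e}=\overline{e.a^*}$, that $\J a\J^{-1}$ implements the right action $\psi.a=\J a^*\J^{-1}.\psi$ of Proposition~\ref{condd1}; this is precisely the commutation property (4). So I would first spell out that (2)'s $\J$-condition and (4) hold, then note that the right action so defined is the bimodule structure already carried by $\CS$.

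Next I would invoke Proposition~\ref{Diracnabla} to handle the $\dirac$-related conditions. Setting $\dirac=\la\circ\nabla_\CS$ with $\la$ a left module map gives $[\dirac,a]\phi=\extd a\la\phi$ immediately. Since $\la$ is assumed to be a \emph{bimodule} map, Proposition~\ref{Diracnabla} tells us that condition (6), the first order condition $[[\dirac,a],\J b\J^{-1}]=0$, holds. For condition (5), $\J\dirac=\epsilon'\dirac\J$, the content is the displayed identity $\epsilon'\,\J[\dirac,a^*]\J^{-1}\phi=\la(\sigma_\CS(\phi\tens\extd a))$; I would point out that the intermediate computation in the excerpt, passing through the canonical conjugate connection $\nabla_{\overline{\CS}}$ of~\eqref{riconbar}, shows that the hypotheses $(\id\tens j)\nabla_\CS=\nabla_{\overline{\CS}}\,j$ (equation~\eqref{jpreserves}) together with $j\,(\la)=\epsilon'\,\overline{\la\,\sigma_\CS}\Upsilon^{-1}(\star\tens j)$ (equation~\eqref{lapreserves}) are exactly what is needed to compute $\overline{\J\dirac\phi}=\epsilon'\,\overline{\dirac\J\phi}$ on left generators. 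Because the difference $(\id\tens j)\nabla_\CS-\nabla_{\overline{\CS}}\,j$ is a left module map, checking on generators suffices, so (5) follows for all of $\CS$. This completes the odd case, conditions (2)--(6) not involving $\gamma$.

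For the even case I would add $\gamma$ and reduce to the third (unnamed) proposition above. The bimodule-map, involutivity ($\gamma^2=\id$), and connection-intertwining hypotheses give, exactly as in that proposition's one-line proof, the grading anticommutation $\gamma\dirac=-\dirac\gamma$ via $\gamma\circ\la=-\la\circ(\id\tens\gamma)$ and $\nabla_\CS\gamma=(\id\tens\gamma)\nabla_\CS$; this is condition (3) in even dimension. That $\gamma$ is a bimodule map delivers $\gamma^2=1$ and $[\gamma,a]=0$ (the $\gamma$-part of (2)), and commuting with the right action gives the analogue for $\J b\J^{-1}$. The remaining relation $\J\gamma=\epsilon''\gamma\J$ is the content of the hypothesis $j\circ\gamma=\epsilon''\,\overline{\gamma}\circ j$: translating through $j(\phi)=\overline{\J\phi}$ and $\overline{\gamma}(\overline{\psi})=\overline{\gamma\psi}$ turns $j(\gamma\phi)=\overline{\J\gamma\phi}$ and $\epsilon''\overline{\gamma}(j\phi)=\epsilon''\overline{\gamma\J\phi}$ into $\J\gamma=\epsilon''\gamma\J$, completing (2).

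The main obstacle, and the only step that is not purely bookkeeping, is the translation of condition (5) through the conjugate formalism: one must verify that the bar-category identities~\eqref{jpreserves} and~\eqref{lapreserves}, phrased in terms of $j$, $\Upsilon$, $\star$ and $\nabla_{\overline{\CS}}$, genuinely reproduce the operator identity $\J\dirac=\epsilon'\dirac\J$ and not some twist of it. The delicate points are keeping the antilinearity of $\J$ correctly tracked (so that $\overline{\J\dirac\phi}=j\dirac\phi$ is literally the object being compared) and confirming that $\sigma_\CS$ enters through $\nabla_{\overline{\CS}}$ in the same way it enters Proposition~\ref{Diracnabla}'s displayed formula; once the reduction-to-generators argument is in place, the rest is formal.
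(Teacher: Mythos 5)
Your proposal is correct and follows essentially the same route as the paper, which explicitly presents Theorem~\ref{sptripres} as a summary of Propositions~\ref{condd1} and~\ref{Diracnabla}, the unnamed $\gamma$-proposition, and the surrounding discussion of $j$, $\Upsilon$, $\star$ and equations~(\ref{jpreserves})--(\ref{lapreserves}); your unwinding of $\overline{j}\,j=\epsilon\,\mathrm{bb}$ to $\J^2=\epsilon$, of the bimodule-map property of $j$ to condition (4), and of the two bar-category identities to condition (5) is exactly the intended argument. The only minor slip is that the reduction-to-generators remark pertains to verifying the hypothesis $(\id\tens j)\nabla_\CS=\nabla_{\overline{\CS}}\,j$ in examples rather than to deducing (5) in the theorem, where that identity is simply assumed.
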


\subsection{The complex valued inner product}\label{dirachermitian}
A hermitian inner product is antilinear in one position (in this case the first) and linear in the other, so it may be guessed that it appears rather more natural when we use conjugates. If we take a linear map $\<\!\<\ ,\ \>\!\>:\overline{\CS}\tens_A \CS\to \C$ then we have the right antilinearity properties, and \textit{explicitly writing the antilinear identity} we have, for $a\in A$ and $\phi,\psi\in\CS$,
\[
\<\!\< \overline{\psi} , a.\phi  \>\!\> = \<\!\< \overline{\psi}.a , \phi  \>\!\> = \<\!\< \overline{a^*.\psi} , \phi  \>\!\>\ ,
\]
which is the equation in property (1a), with explicit conjugates added. We have used the standard comma for inner product, but with the conjugate modules notation we could equally consistently have written $\<\!\< \overline{\psi} \tens \phi  \>\!\>$ instead of 
$\<\!\< \overline{\psi} , \phi  \>\!\>$. 

As we have an antilinear map $\J$, we can define a bilinear inner product, rather than a hermitian inner product, by
 $(\!(,)\!)=\<\!\<,\>\!\>\circ(j\tens\id):\CS\tens_A\CS\to \C$. This is now complex linear on both sides while the property of $\<\!\<\ ,\ \>\!\>$ under complex conjugation of the output appears now as $(\!(\psi,\phi)\!)^*=\eps(\!(\J\phi,\J \psi)\!)$. 
Then the isometry condition 
$\<\!\<\overline{\J\psi},\J\phi\>\!\>=\<\!\<\overline{\phi},\psi\>\!\>$ for $\J$ is now equivalent to $(\!(\psi,\J\phi)\!) = \epsilon\, (\!(\J\phi,\psi)\!)$, and this reduces to, for all $\phi,\psi\in\CS$,
\begin{align} 
(\!(\psi,\phi)\!) = \epsilon\, (\!(\phi,\psi)\!)\ .
\end{align}

Meanwhile, $\<\!\<\overline{\dirac\psi},\phi\>\!\>=\eps(\!(\J \dirac\psi,\phi)\!\)$ and $\<\!\<\overline{\psi},\dirac\phi\>\!\>=\eps(\!(\J \psi,\dirac\phi)\!\)$, so assuming (5) and relabelling $\psi$, we see that $\dirac$ being hermitian is equivalent to showing that
\begin{equation}\label{roundDherm} \eps'(\!( \dirac\psi,\phi)\!\)=(\!( \psi,\dirac\phi)\!\).\end{equation}
In the examples we shall deal directly with the definition of $\dirac$ being hermitian, but it is interesting to note some conditions on the bilinear inner product which would imply that $\dirac$ is hermitian. Note that bimodule connections extend canonically to tensor products.

\begin{proposition} For $\dirac$ constructed as in Theorem~\ref{sptripres}, suppose
\[
0=(\!( ,)\!\)\circ (\la\tens\id)\nabla_{\CS\tens\CS}:\CS\otimes_A\CS\to \C\ ,
\]
and also that
\[
(\!( ,)\!\)\circ ((\la)\sigma_\CS\tens\id)=- \, \epsilon'\, (\!( ,)\!\)\circ (\id\tens\la): \CS\otimes_A\Omega^1 \otimes_A \CS\to \C\ .
\]
Then $\dirac$ is hermitian.
\end{proposition}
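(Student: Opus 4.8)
The plan is to deduce the hermiticity criterion (\ref{roundDherm}) by expanding the canonical tensor product connection on $\CS\tens_A\CS$ and then matching terms against the two displayed hypotheses. First I would write out $\nabla_{\CS\tens\CS}$ for the left bimodule connection $(\nabla_\CS,\sigma_\CS)$, namely
\[
\nabla_{\CS\tens\CS}=\nabla_\CS\tens\id+(\sigma_\CS\tens\id)(\id\tens\nabla_\CS),
\]
so that on a generator $\psi\tens\phi$ one gets $\nabla_{\CS\tens\CS}(\psi\tens\phi)=\nabla_\CS\psi\tens\phi+(\sigma_\CS\tens\id)(\psi\tens\nabla_\CS\phi)$. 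Applying $\la\tens\id$ on the first two tensor legs and using $\dirac=\la\nabla_\CS$, the first summand collapses to $\dirac\psi\tens\phi$, while the second becomes $((\la)\sigma_\CS\tens\id)(\psi\tens\nabla_\CS\phi)$.

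Next I would feed this into the first hypothesis, $0=(\!(,)\!)\circ(\la\tens\id)\nabla_{\CS\tens\CS}$, evaluated at $\psi\tens\phi$, obtaining
\[
0=(\!(\dirac\psi,\phi)\!)+(\!(,)\!)\circ((\la)\sigma_\CS\tens\id)(\psi\tens\nabla_\CS\phi).
\]
The second hypothesis $(\!(,)\!)\circ((\la)\sigma_\CS\tens\id)=-\eps'\,(\!(,)\!)\circ(\id\tens\la)$, applied to the element $\psi\tens\nabla_\CS\phi\in\CS\tens_A\Omega^1\tens_A\CS$, rewrites the second summand as $-\eps'\,(\!(\psi,\la\nabla_\CS\phi)\!)=-\eps'\,(\!(\psi,\dirac\phi)\!)$. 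Substituting back gives $(\!(\dirac\psi,\phi)\!)=\eps'\,(\!(\psi,\dirac\phi)\!)$, and multiplying through by $\eps'$ (using $(\eps')^2=1$) yields precisely (\ref{roundDherm}); hence $\dirac$ is hermitian.

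Since the argument is essentially one line of bookkeeping once the tensor product connection is written down, I do not anticipate a deep obstacle. The points needing care are purely formal: I must confirm the orientation convention in the tensor product connection (which factor carries $\sigma_\CS$), as a slip there would corrupt the final sign $\eps'$, and I must check that $\la\tens\id$, $(\la)\sigma_\CS\tens\id$ and $\id\tens\la$ are well defined on the balanced tensor products over $A$, so that evaluating both hypotheses on $\psi\tens\nabla_\CS\phi$ is legitimate. The latter is exactly where the bimodule-map properties of $\la$ and $\sigma_\CS$ enter.
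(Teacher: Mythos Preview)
Your proof is correct and follows essentially the same approach as the paper: expand the tensor product connection as $\nabla_\CS\tens\id+(\sigma_\CS\tens\id)(\id\tens\nabla_\CS)$, apply $\la\tens\id$ so that the first hypothesis becomes $0=(\!(,)\!)\circ(\dirac\tens\id)+(\!(,)\!)\circ((\la)\sigma_\CS\tens\id)(\id\tens\nabla_\CS)$, and then invoke the second hypothesis to obtain (\ref{roundDherm}). The paper's proof is the same argument compressed into two lines.
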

\noindent\textbf{Proof:}\quad By definition of the connection on tensor products, the first equation is explicitly
\[
0=(\!( ,)\!\)\circ (\dirac\tens\id)+ (\!( ,)\!\)\circ ((\la)\sigma_\CS\tens\id)(\id\tens\nabla_\CS)\ ,
\]
and application of the second displayed equation gives (\ref{roundDherm}). \qquad$\largesquare$

Note that we do not require that $\<\!\<\ ,\ \>\!\>$ is the composition of a positive linear functional with an $A$-valued hermitian inner product and  $\nabla_S$ hermitian metric compatible (but both of these further features will apply in the $q$-sphere example). 

\subsection{Inner fluctuations}
Given a bimodule $L$, there is a functor $\mathcal{G}_L$ from the category ${}_A\mathcal{M}_A$ of $A$-bimodules to itself given by $\mathcal{G}_L(E)=\overline{L}\tens_A E\tens_A L$, sending a bimodule map $\theta:E\to F$ to
$\id\tens\theta\tens\id$. If we have a given isomorphism $L\tens_A \overline{L}\, \cong\, A$ of $A$-bimodules, then
\[
\mathcal{G}_L(E)\tens_A \mathcal{G}_L(F)=
\overline{L}\tens_A E\tens_A L \tens_A \overline{L}\tens_A F\tens_A L\, \cong\,\mathcal{G}_L(E\tens_A F)\ ,
\]
so the functor preserves the tensor product. 

The description of Morita contexts can be found in \cite{BassK} (and a $C^*$-algebra description in \cite{RieffelMorita}), and involves a bimodule $L$ so that the tensor product of $L$ with its dual, both ways round, is isomorphic to $A$, and the two isomorphisms obey associativity conditions. The special case we have is where the dual of the bimodule is its conjugate, and we get a non-degenerate inner product. In 
 \cite{NCline} this case is shown to give rise to an integer graded star algebra which is $L\tens_A \dots\tens_A L$ in positive degrees
 and $\overline{L}\tens_A \dots\tens_A \overline{L}$ in negative degrees. This star algebra can be thought of as the algebra of functions on a principal circle bundle on the noncommutative space, an idea defined formally in terms of a quantum principal bundle or Hopf-Galois extension \cite{BrzMaj:gau}. 
 
 Take $\overline{c}\in\overline{L}$ and $x\in L$ which are inverses under the product, i.e.\
$\overline{c}\tens x$ corresponds to $1\in A$. It will be convenient to write this identification as an inner product $\<,\>_L:\overline{L}\tens_A L\to A$. 
Then there is a linear map $\overline{c}\tens -\tens x:E\to \mathcal{G}_L(E)$ given by
$e\mapsto \overline{c}\tens e\tens x$, which is not necessarily a bimodule map. However it does have the tensorial property
\[
\xymatrix{
E\otimes_A F  \ar[rr]^{    (\overline{c}\tens -\tens x)\tens (\overline{c}\tens -\tens x) \ \ \ \ \ }  \ar[dr]^{\overline{c}\tens -\tens x }  & &  \mathcal{G}_L(E) \otimes_A \mathcal{G}_L(F)  \\
  &   \mathcal{G}_L(E\otimes_A F)  \ar[ur]^{\cong}
  }
\]

Now suppose we have a left bimodule connection $\nabla_L: L  \to \Omega^1\tens_A L$ and invertible
$\sigma_L: L\tens_A\Omega^1  \to \Omega^1\tens_A L$. 
If the connection preserves the inner product we get
\[
\nabla_{ \overline{L} }\tens\id+(\sigma_{ \overline{L} }\tens\id)(\id\tens\nabla_L)=\extd\circ \<,\>:\overline{L}\tens_A L\to \Omega^1\ ,
\]
and from this, remembering that the inner product is invertible, 
\[
\sigma_{ \overline{L} }{}^{-1}\,\nabla_{ \overline{L} }(\overline{c})\tens x=-\, \overline{c}\tens\nabla_L (x)\ .
\]
As $x$ is invertible, we can write $\nabla_L(x)=\kappa\tens x$ for some $\kappa\in\Omega^1$, and then we deduce 
$\sigma_{ \overline{L} }{}^{-1}\,\nabla_{ \overline{L} }(\overline{c})=-\,\overline{c}\tens\kappa$.

Now return to the assumptions and notations 
on the left bimodule connection $(\CS,\nabla_\CS,\sigma_\CS)$ and the action $\la:\Omega^1\tens_A \CS\to \CS$ which earlier we related to the Dirac operator.
Define an action of $\Omega^1$ on 
$\overline{L}\tens_A \CS\tens_A L$ by
\[
\xi\la(\overline{y}\tens \phi\tens x)=(\id\tens\la\tens\id)(\sigma_{\overline{L}}{}^{-1}(\xi\tens \overline{y})\tens \phi\tens x)
\]
and the Dirac operator $\dirac_{\mathcal{G}_L(\CS)}$ is the composition of this with the standard tensor product covariant derivative,
\[
\nabla_{\overline{L}}\tens\id\tens\id+(\sigma_{\overline{L}}\tens\id\tens\id)\big(
(\id\tens\nabla_\CS\tens\id)+(\id\tens\sigma_\CS\tens\id)(\id\tens\id\tens\nabla_L)\big)\ .
\]
On taking the composition we get some simplification, giving
\begin{align*}
\dirac_{\mathcal{G}_L(\CS)} =&\ (\sigma_{\overline{L}}{}^{-1}\nabla_{\overline{L}})\la\id\tens\id+\id\tens \dirac_\CS\tens\id+
(\id\tens(\la)\sigma_\CS\tens\id)(\id\tens\id\tens\nabla_L)\ .
\end{align*}
Now consider the commutative diagram, 
\[
\xymatrix{
\CS  \ar[r]^{\overline{c}\tens - \tens x\ \ \ }  \ar[d]^{\dirac_\CS }  &  \mathcal{G}_L(\CS)   \ar[d]^{  \dirac_{\mathcal{G}_L(\CS)} }  \\
  \CS  \ar[r]  &   \mathcal{G}_L(\CS)
  }
\]
where the bottom line is
\[
\phi \longmapsto \overline{x}\tens \dirac_\CS(\phi)\tens x- \overline{c}\tens \kappa\la \phi \tens x+ \overline{c}\tens (\la)\sigma_\CS(\phi\tens\kappa)\tens x\ .
\]
This can be rewritten as
\[
\xymatrix{
\CS  \ar[r]^{\overline{c}\tens - \tens x\ \ \ }  \ar[d]^{\dirac_\CS +\hat\kappa }  &  \mathcal{G}_L(\CS)   \ar[d]^{  \dirac_{\mathcal{G}_L(\CS)} }  \\
  \CS   \ar[r]^{\overline{c}\tens - \tens x\ \ \ }   &   \mathcal{G}_L(\CS)
  }
\]
where $\hat\kappa:\CS\to \CS$ for $\kappa\in\Omega^1$ is given by
\[
\hat\kappa(\phi)= (\la)\sigma_\CS(\phi\tens\kappa) - \kappa\la \phi\ .
\]
Rewriting (\ref{lapreserves}) gives $\J(\kappa^*  \la\J^{-1} \phi)=\epsilon'\, (\la)\sigma_\CS(\phi\tens\kappa)$, so 
\begin{align*}  
\hat\kappa(\phi)= \epsilon'\,      \J(\kappa^*  \la\J^{-1} \phi) -\kappa\la \phi  \ .
\end{align*}
If we follow \cite{ConMar} and specialise to the case where $L$ is $A$, then we can choose $x$ to be a unitary, in which case $\kappa^*=-\kappa$, and we have 
\begin{align*}  
\hat\kappa(\phi)= -\, \epsilon'\,      \J(\kappa  \la\J^{-1} \phi) -\kappa\la \phi  \ ,
\end{align*}
in agreement with the usual formula for inner fluctuations. In \cite{ConMar} it is explained that the inner fluctuations of the standard model of particle physics correspond to the gauge bosons other than the graviton, and arise via the mechanism of Morita equivalences.

\section{Examples of bimodule connections and Dirac operators}

Now we shall give three examples of our geometrical construction of Dirac operators from bimodule connections, on a matrix algebra, a quantum sphere, and a quantum disk. 

\subsection{A  Dirac operator on $M_2(\C)$} \label{SecMatrix}
Take the algebra $A=M_2(\C)$, with calculus
\[
\Omega^1=\Omega^{1,0}\oplus \Omega^{0,1}=M_2 \oplus M_2\ ,
\]
which we also write as $\Omega^{1,0}=M_2\,s$ and 
$\Omega^{1,0}=M_2\,t$, where $s=I_2\oplus 0$ and 
$t=0\oplus I_2$ are central elements (including $st=ts$), and to have a two dimensional calculus we impose $s^2=t^2=0$. 
The differential $\extd$ is the graded commutator $[E_{12}s+E_{21}t,-\}$ (i.e.\ the commutator when applied to even forms, and the anticommutator on odd forms). This is a star calculus, where we use the usual star on matrices and $s^*=-t$. 

Take an ansatz for a particular Dirac operator on the left module $\mathcal{S}=M_2(\C)\oplus M_2(\C)$, with the action of matrix product on each summand.
The Hilbert space inner product is $\<\!\<\overline{x\oplus u},y\oplus v\>\!\>=\mathrm{Tr}(x^*y+u^*v)$. 
Define $\dirac$ by the following formula,
\[
\dirac(x\oplus u)=(d_1u+uc_1) \oplus (d_2x+xc_2)\ ,
\]
for matrices $c_i,d_i$. Now
\begin{align*}
\<\!\<\overline{\dirac(x\oplus u)},y\oplus v\>\!\> =&\ 
\<\!\<\overline{(d_1u+uc_1) \oplus (d_2x+xc_2)},y\oplus v\>\!\> \cr
=&\ \mathrm{Tr}(u^*d_1^*y+c_1^*u^*y+x^*d_2^*v+c_2^*x^*v)\ ,\cr
\<\!\<\overline{x\oplus u},\dirac(y\oplus v)\>\!\> =&\ 
\<\!\<\overline{x\oplus u},(d_1v+vc_1) \oplus (d_2y+yc_2)\>\!\>  \cr
=&\ \mathrm{Tr}(u^*d_2y+u^*yc_2+x^*d_1v+x^*vc_1)\ .
\end{align*}
To have $\dirac$ hermitian we need $d_2=d_1^*$ and $c_2=c_1^*$. 
Define $\J:\mathcal{S} \to \mathcal{S}$ by $\J(x\oplus u)=(-u^*)\oplus x^*$, so $\epsilon=-1$. Now we have $
\J b\J^{-1}(x\oplus u)=x\,b^*\oplus u\,b^*$. 
Next
\begin{align*}
\J \dirac(x\oplus u)=&\ (-(d_1^*x+xc_1^*)^*) \oplus (d_1u+uc_1)^*\ ,\cr
\dirac\J(x\oplus u)=&\ (d_1x^*+x^*c_1) \oplus (-d_1^*u^*-u^*c_1^*)
\end{align*}
so $c_1=-d_1$ gives $\epsilon'=1$. Now the grading operator $\gamma(x\oplus u)=(-x)\oplus u$ completes the set of operators
for dimension $n=2$ with $\eps''=-1$. 
We calculate
\[
[\dirac,a](x\oplus u)=[d_1,a]u \oplus [d_1^*,a]x\ .
\]
To fit with the differential structure we set $d_1=E_{12}$, and seek $\la$ so that 
$\extd a\la (x\oplus u)=[\dirac,a](x\oplus u)$ or 
\[
([E_{12},a]\oplus [E_{21},a])\la 
(x\oplus u)=[E_{12},a]u \oplus [E_{21},a]x
\]
which we solve by defining the action of $\Omega^1$ as  $(p\oplus q)\la (x\oplus u)=p\,u\oplus q\,x$. 
The required connection $\nabla_\CS$ is then 
\[
\nabla_\CS(x\oplus u) = \extd x\tens (1\oplus 0) + 
\extd u\tens (0\oplus 1)\ ,
\]
and a little calculation gives, for $\xi\in\Omega^1=M_2\oplus M_2$,
\[
\sigma_\CS((x\oplus u)\tens\xi) = x.\xi\tens (1\oplus 0) + 
 u.\xi\tens (0\oplus 1)\ .
\]
With these choices we then verify the condition in Proposition~\ref{Diracnabla},
\begin{align*}
\J[\dirac,b]\J^{-1}(x\oplus u) =&\ u[E_{12},b^*]\oplus x [E_{21},b^*]
=\la\sigma\big((x\oplus u) \tens ([E_{12},b^*]\oplus  [E_{21},b^*])\big)\ .
\end{align*}
so this proposition applies. Similarly,  we can check directly that 
\[
\<\!\<\overline{ \J(x\oplus u)  },\J(y\oplus v)\>\!\>=
\<\!\<\overline{ (-v^*)\oplus y^*  },(-u^*)\oplus x^*\>\!\>=
\mathrm{Tr}(vu^*+yx^*) = \<\!\<\overline{x\oplus u},y\oplus v\>\!\>
\]
so $\J$ is an isometry. We can also recover this and that $\dirac$ is hermitian from our deduced data and application of Section~\ref{dirachermitian}.

To compare this with the known classification of spectral triples on matrix algebras, we refer to \cite{Barrmatrix}. We write $\CS=\C^2\tens M_2(\C)$ by writing $x\oplus u$ as a vector, where $x,u\in M_2(\C)$. We take the signature $(0,2)$ Clifford algebra with $\{\gamma^i,\gamma^j\}=-2\delta_{ij}$ given by $\gamma^i=\imath\sigma^i$ in terms of Pauli matrices, $i=1,2$. We set $\gamma=\imath^{3}\gamma^1\gamma^2=-\sigma^3$ which agrees with the one above. We also need an antilinear $C$ such that $C^2=\eps=-1$, $(Cv, Cw)=(w,v)$ for the standard left-antilinear inner product on $\C^2$, and $C\gamma^i=\eps'\gamma^iC=\gamma^iC$. The operation 
\[ C\begin{pmatrix} v_1 \cr v_2\end{pmatrix}=\begin{pmatrix} -\overline{v_2} \cr \overline{v_1}\end{pmatrix}\]
does the job and $\J=C\tens(\ )^*$ then gives the same $\J$ as above. Finally, our Dirac operator can now be written as 
\[ \dirac=-{1\over 2}\left(\gamma^1\tens[\gamma^1,\ ]-\gamma^2\tens[\gamma^2,\ ]\right)\]
which is a specific member of the general class of spectral triple here (where commutators in general are by arbitrary antihermitian matrices). We have seen how this arises naturally from an action $\la$ and a bimodule connection.

\subsection{A Dirac operator on the noncommutative Hopf fibration} \label{ncHopfDirac}

We follow the construction of the $q$-Dirac operator on the standard $q$-sphere as a framed quantum homogeneous space in \cite{Ma:rieq} but with a  couple of constant parameters (which can be seen as normalisations) and now with consideration of $*$, $\J$ and an inner product which we not covered there. We recall that the algebra $\C_q[SU_2]$ has generators $a,b,c,d$, which are assigned grades $|a|=|c|=1$ and $|b|=|d|=-1$ and we use the 
conventions where $ba=qab$ etc. The standard $q$-sphere 
$A=\C_q[S^2]$ is the subalgebra of grade zero elements in $\C_q[SU_2]$. The usual 3D calculus for $\C_q[SU_2]$ in \cite{Wor} has basis 1-forms $e^0,e^\pm$ of grades $|e^0|=0$ and $|e^\pm|=\pm 2$ and bimodule commutation relations $e^0x=q^{2|x|}x e^0$ central and $e^\pm x=q^{ |x|}x e^\pm$. For a calculus on the sphere, we take the horizontal forms (with basis $e^\pm$ of degree $|e^\pm|=\pm 2$), and then the grade zero submodule. This means that $\Omega^{1,0}$ and $\Omega^{0,1}$ for the cotangent bundle on the $q$-sphere can be identified with the degree $\mp 2$ subspaces of $\C_q[SU_2]$ respectively. Note also in this construction that both $\Omega^1$ and the horizontal forms $\Omega^1_{hor}$ on $\C_q[SU_2]$ are free modules (with basis $e^\pm,e^0$ and $e^\pm$ respectively) so we have a canonical projection $\pi:\Omega^1\to \Omega^1_{hor}$ of free left $\C_q[SU_2]$ which will be useful in computations, given by $e^0\to 0$. This is the set-up for the quantum Riemannian geometry of the standard $q$-sphere as a quantum homogeneous space from the quantum Hopf fibration \cite{Ma:rieq}; the $q$-monopole connection on the quantum principal bundle induces a canonical choice of `quantum Levi-Civita' connection on $\Omega^1=\Omega^{1,0}\oplus \Omega^{0,1}$.

For the spin bundle we similarly set generators $f^\pm$ with grades $|f^\pm|=\pm1$, and
$\S_\pm$ to be the grade zero elements in $\C_q[SU_2].f^\pm$, with $\CS=\CS_+\oplus \CS_-$.
Suppose that the generators commute with all grade zero algebra elements.
 In other words, $\CS_\pm$ can be identified with the degree $\mp1$ subspace of $\C_q[SU_2]$ and as a (bi)-module over $\C_q[S^2]$ (which means that $f^\pm$ commute with elements of $A$). This is again the set-up used in \cite{Ma:rieq} for the spin bundle as charge $\pm1$ $q$-monopole sections and again the $q$-monopole induces a covariant derivative $\nabla_S:\CS\to \Omega^1\tens_A\CS$. This is well-known and given explicitly by
\[
\nabla_\S(x\,f^++y\,f^-)=\pi\extd x.a \tens    d.f^+ -q^{-1}\, \pi\extd x. c\tens   b .f^+ +\pi \extd y. d \tens   a .f^- - q\,\pi\extd y.b \tens   c .f^-\ 
\]
One can check that this is a bimodule connection with
\[
\sigma_S((x f^++ yf^-)\tens f e^{\pm})=xf e^{\pm }(a\tens d - q^{-1}c\tens b)f^+ + y f e^{\pm }(d\tens a - q b\tens c)f^-\]
for $f$ of degree $\mp 2$.

For the action $\la$ of $\Omega^1$ on the spinors which preserves grades,  we follow \cite{Ma:rieq} and  set
\[
 f e^+\la y f^- = \alpha\, f y f^+\ ,\ f e^-\la x f^+ = \beta\, f x f^- , \quad \la:\Omega^{1,0}\tens\CS_-\to \CS_+,\quad \la:\Omega^{0,1}\tens\CS_+\to \CS_-\]
and other degree combinations zero, where we have explicitly inserted two constant complex parameters $\alpha,\beta$ (one could absorb one of these in the normalisation of the $f^\pm$). Apart from the constant parameters, this is just the product of the appropriate degree subspaces inside $\C_q[SU_2]$ as in \cite{Ma:rieq}. 

If we write $\pi\extd x=\del_+ x\,e^++\del_- x\,e^-$, the Dirac operator $\dirac=(\la\tens\id)\nabla_\S$ comes out  for $|x|=-1$ and $|y|=1$ as
\[
\dirac(x\,f^++y\,f^-) = \alpha q^{-1} \del_+ y\, f^+ +\beta q\, \del_- x\,f^- 
\]
which apart from the $\alpha,\beta$ weightings completes our recap of the Dirac operator introduced in \cite{Ma:rieq}. The grading bimodule map is given by $\gamma=\pm\id$ on $\CS_\pm$.  

The new ingredient we need beyond \cite{Ma:rieq} is $\CJ$. We set $\J(x f^\pm)=\pm\delta^{\pm1}\,x^* f^\mp$ for $\delta$ real, giving $\eps=-1$ and  $\epsilon''=-1$. The connection preserves $j$ since it vanishes on the generators, while using  $e^{\pm *}=-q^{\mp 1}e^\mp$ we get
\begin{align*}
&(\la)\sigma_\CS(\J (x\, f^+)\tens (y\, e^{-})^*) = -\,\delta\,q (\la)\sigma_\CS(x^* f^- \tens e^+\,y^*)\cr
 =&\  -\,\delta\,q^{-1} (\la)\sigma_\CS(x^* f^- \tens y^*\,e^+) = -\,\delta\,q^{-1} (\la)(x^*\,y^*\,  e^{+ }(d\tens a - q b\tens c)f^-)\cr
 =&\  -\,\delta\,q^{-2} (\la)(x^*\,y^*(d\,  e^{+ }\tens a - q\, b\,  e^{+ }\tens c)f^-)
 =  -\,\alpha\,\delta\,q^{-2} x^*\,y^*\,f^+\ ,\cr
&(\la)\sigma_\CS(\J (x\, f^-)\tens (y\, e^{+})^*) = \delta^{-1}q^{-1}\,(\la)\sigma_\CS(x^* f^+  \tens e^-\,y^*)\cr
 =&\  \delta^{-1}q^{1}\,(\la)\sigma_\CS(x^* f^+  \tens y^*\,e^-) = \delta^{-1}q^{1}\,(\la)(x^*\,y^*\, e^{- }(a\tens d - q^{-1}c\tens b)f^+)\cr
 =&\  \delta^{-1}q^{2}\,(\la)(x^*\,y^*(a\, e^{- }\tens d - q^{-1}c\, e^{- }\tens b)f^+)
 = \beta\, \delta^{-1}q^{2}\,x^*\,y^*\, f^-\ .
\end{align*}
Referring back to (\ref{lapreserves}) with $\eps'=1$, we need to compare these results with
\begin{align*}
\J(y\, e^{-}\la x\, f^+) =&\ \J(\beta\,y\,x\,f^-) = -\,\delta^{-1}\,\beta^*\, (y\,x)^*\, f^+\ ,\cr
\J(y\, e^{+}\la x\, f^-) =&\ \J(\alpha\,y\,x\,f^+) = \delta\,\alpha^*(y\,x)^*\,f^-\ .
\end{align*}
In the case $\epsilon'=1$, (\ref{lapreserves}) becomes the condition 
\begin{equation}\label{qsphereparam} \delta^2\,\alpha^* = \beta\, q^{2},\end{equation}
 for $q$ real, which requires that $\beta/\alpha^*$ is real. Assuming the latter, we therefore define $\delta$ as the (say, positive) square root of $\beta q^2/\alpha^*$ and have  now satisfied all the algebraic  axioms (2)-(6) of a spectral with dimension $n=2$, by Theorem~\ref{sptripres}. 
 
Next we define a positive hermitian inner product $\<,\>:\overline{ \S}\tens_A \S \to A$ by the following, for some $\mu>0$,
\[
\<\overline{ x_+\, f^+ + x_-\, f^-}, y_+\, f^+  +  y_-\, f^-  \>=x_+{}^*\,y_+ + \mu\, x_-{}^*\,y_- \ .
\]
So far we have a $A$ valued inner product, but we really need an honest $\C$ valued inner product for a Dirac operator. We define
\[
\<\!\<,\>\!\>=\frac{\smallint \<,\>\,e^+\wedge e^-}{\smallint e^+\wedge e^-}\ .
\]
where $\smallint$ is the de Rham cohomology class in 
$H_{dR}^2(\C_q[S^2])\cong \C$. This gives a hermitian inner product
$\<\!\<,\>\!\>:\overline{ \S}\tens_A \S \to \C$. (This is just the Haar integral of the $A$-valued inner product.)

\begin{proposition} \label{prrp1} For $ \<\!\<\ ,\ \>\!\>$ defined by the Haar integral and $\mu=q\delta^{-2}$, $\dirac$ is hermitian. 
\end{proposition}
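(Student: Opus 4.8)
The plan is to verify hermiticity directly, establishing $\<\!\<\overline{\dirac\psi},\phi\>\!\>=\<\!\<\overline{\psi},\dirac\phi\>\!\>$ for $\psi=x_+f^++x_-f^-$ and $\phi=y_+f^++y_-f^-$, where $|x_+|=|y_+|=-1$ and $|x_-|=|y_-|=1$. Feeding $\dirac(x\,f^++y\,f^-)=\alpha q^{-1}\del_+ y\,f^++\beta q\,\del_- x\,f^-$ into the $A$-valued inner product and then applying the Haar integral, the left-hand side becomes the integral of $\alpha^* q^{-1}(\del_+ x_-)^*y_++\mu\beta^* q(\del_- x_+)^* y_-$ and the right-hand side the integral of $\alpha q^{-1}x_+^*\del_+ y_-+\mu\beta q\,x_-^*\del_- y_+$. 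First I would note that all four summands are grade-zero, hence genuinely integrable, and reduce the claim to matching the $(\del_+ x_-)^*y_+$ term against $x_-^*\del_- y_+$, and the $(\del_- x_+)^*y_-$ term against $x_+^*\del_+ y_-$, separately.

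The two technical ingredients are a conjugation rule and an integration-by-parts rule. For the first, I would start from $(\extd g)^*=\extd(g^*)$, expand $\pi\extd g=\del_+ g\,e^++\del_- g\,e^-$, apply $*$ via $(\del_\pm g\,e^\pm)^*=e^{\pm*}(\del_\pm g)^*$ with $e^{\pm*}=-q^{\mp1}e^\mp$, and then commute the coefficient back to the left using $e^\mp c=q^{|c|}c\,e^\mp$ and the degree count $|\del_\pm g|=|g|\mp2$. Comparing coefficients of $e^\pm$ yields
\[
(\del_+ g)^*=-q^{|g|-1}\del_-(g^*)\ ,\qquad (\del_- g)^*=-q^{|g|+1}\del_+(g^*)\ .
\]
For the second, since $\<\!\<\ ,\ \>\!\>$ is built from the de Rham class it annihilates exact $2$-forms, and because $e^\pm$ are closed in the $2$D sphere calculus ($\extd e^\pm\propto e^0\wedge e^\pm$ projects to zero), $\int\extd(w\,e^\mp)=0$ collapses to $\int\del_\pm w=0$ (for $|w|=\pm2$). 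Combined with the twisted Leibniz rule $\del_\pm(uv)=q^{|v|}(\del_\pm u)v+u\,\del_\pm v$ that follows from $e^\pm v=q^{|v|}v\,e^\pm$, this furnishes the by-parts identity I need.

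Assembling these, using $|x_-|=1$ gives $(\del_+ x_-)^*=-\del_-(x_-^*)$, and by-parts with $w'=x_-^*y_+$ (degree $-2$) gives $\int\del_-(x_-^*)y_+=-q\int x_-^*\del_- y_+$, so that $\int(\del_+ x_-)^*y_+=q\int x_-^*\del_- y_+$. Hence the left-hand ``$+$'' term equals $\alpha^*\int x_-^*\del_- y_+$ and matches the right-hand ``$-$'' term $\mu\beta q\int x_-^*\del_- y_+$ exactly when $\alpha^*=\mu\beta q$; by \eqref{qsphereparam} one has $\alpha^*=\beta q^2\delta^{-2}$, so this is precisely $\mu=q\delta^{-2}$. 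The remaining pair runs identically (now $(\del_- x_+)^*=-\del_+(x_+^*)$ and by-parts with $w=x_+^*y_-$ of degree $2$) and produces the same condition $\mu=q\delta^{-2}$, so the two constraints are consistent and hermiticity follows. The main obstacle I anticipate is purely the $q$-power bookkeeping: keeping the factors from $e^{\pm*}=-q^{\mp1}e^\mp$, from commuting coefficients past forms, and from the twisted Leibniz rule all aligned, so that the grade-dependent powers cancel to leave exactly the clean condition on $\mu$.
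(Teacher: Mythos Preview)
Your argument is correct and rests on the same mechanism as the paper's: the difference of the two sides is recognised as the Haar integral of (the coefficient of) an exact top form, hence vanishes, and the residual algebraic constraint is precisely $\alpha^*=\mu\beta q$, equivalent via \eqref{qsphereparam} to $\mu=q\delta^{-2}$. The only difference is presentational: the paper computes $\pi\extd(x^*y\,e^-)=(x^*\del_+y-q(\del_-x)^*y)\,e^+\wedge e^-$ in one shot and reads off the vanishing, whereas you first isolate the two reusable lemmas (the conjugation identities $(\del_\pm g)^*=-q^{|g|\mp1}\del_\mp(g^*)$ and the by-parts rule $\int\del_\pm w=0$ for $|w|=\pm2$) and then apply them; both routes handle the second pair of terms by complex conjugation.
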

\noindent\textbf{Proof:}\quad 
We have, using our notations,
\begin{align*}
\<\overline{\dirac(x\,f^+)},y\,f^-\> =&\ \beta^*\, q\, \<\overline{    \del_- x\,f^-    },y\,f^-\>  =\
 \beta^*\, q\,\mu\,     (\del_- x)^*\,y \ ,\cr
\<\overline{x\,f^+},\dirac(y\,f^-)\> =&\ \alpha\, q^{-1}\, \<\overline{x\,f^+},    \del_+ y\, f^+  \>
= \alpha\, q^{-1}\, x^*\, \del_+ y
\ 
\end{align*}
for all $x\,f^+$ and $y\,f^-$ of grade zero. 
So if $ \beta^*\, q\,\mu=\alpha$ we have
\begin{align*}
\<\overline{x\,f^+},\dirac(y\,f^-)\>  - \<\overline{\dirac(x\,f^+)},y\,f^-\>
= \alpha\,q^{-1}\,(x^*\del_+ y 
- q\,(\del_- x){}^*y)\ .
\end{align*}
Using $|x|=-1$ and $|y|=1$, with $\pi\extd x=\del_+ x\,e^++\del_- x\,e^-$ etc, we also have
\begin{align*}
\pi\extd(x^*y)=&\ (x^*\del_+y - q\, (\del_- x){}^*y)\, e^+ + (x^*\del_-y - q^3\, (\del_+x){}^*y)\, e^-\ ,\cr
\pi\extd(x^*y\, e^-)=&\ 
\pi\extd(x^*y)\wedge e^-= (x^*\del_+y - q\, (\del_- x){}^*y)\, e^+ \wedge e^-\ ,
\end{align*}
which gives 
\begin{align*}
\<\!\<\overline{x\,f^+},\dirac(y\,f^-)\>\!\>  = \<\!\<\overline{\dirac(x\,f^+)},y\,f^-\>\!\>\ ,
\end{align*}
on applying the cohomology class given by the Haar integral. Taking the complex conjugate provides the other equation needed to show that $\dirac$ is hermitian. The condition on the parameters here is equivalent to the one stated given that we already assumed (\ref{qsphereparam}). \qquad$\largesquare$

Proceeding with $\dirac$ hermitian by the above proposition, it remains to look at the isometry property of $\J$. For this we note that the underlying  Haar integral on functions, $\int:\C_q[SU_2]\to \C$, is well-known to be a twisted trace in that there is an algebra automorphism $\varsigma$ such that $\int xy = \int \varsigma(y)x$ for all $x,y$ in $\C_q[SU_2]$. Explicitly, 
\[ \varsigma(a^ib^jc^kd^l)=q^{2(l-i)}a^ib^jc^kd^l\]
on monomials from which one can see that $\varsigma$ preserves degree and skew-commutes with $*$ in the sense $\varsigma(x^*)=(\varsigma^{-1}(x))^*$ for all $x\in \C_q[SU_2]$.

\begin{proposition} \label{prrp2} For $ \<\!\<\ ,\ \>\!\>$ defined by the Haar integral and $q\ne 1$, $\J$ is not an isometry but obeys
\[ \<\!\<\overline{\J(x.f^\pm)},\J(y.f^\pm)\>\!\>=q^{\pm 1} \<\!\<\overline{\varsigma^{-1}(y).f^\pm},x.f^\pm\> \!\>,\quad\forall |x|,|y|=\mp1\]
where $\varsigma$ is an algebra automorphism whereby the Haar integral is a twisted trace.
\end{proposition}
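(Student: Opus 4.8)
The plan is to evaluate both sides of the claimed formula directly from the definitions of $\J$, the $A$-valued inner product, and the value $\mu=q\delta^{-2}$ forced by Proposition~\ref{prrp1}, and then to watch everything collapse onto a single identity that encodes the twisted-trace property of the Haar integral. Throughout I write $\int$ for the (normalised) Haar integral, so that $\<\!\<\overline{\phi},\psi\>\!\>=\int\<\overline{\phi},\psi\>$; the normalisation constant from $\smallint e^+\wedge e^-$ cancels between the two sides and can be ignored.

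I would do the $+$ sign first, so $|x|=|y|=-1$. Substituting $\J(x\,f^+)=\delta\,x^*f^-$ and $\J(y\,f^+)=\delta\,y^*f^-$, and using that $\delta$ is real so that both scalars factor out of the inner product as $\delta^{2}$, while both vectors now sit in the $f^-$-slot carrying weight $\mu$, the $A$-valued pairing becomes $\delta^2\mu\,x\,y^*$. Inserting $\mu=q\delta^{-2}$ turns this into $q\,x\,y^*$, so the left-hand side is $q\int x\,y^*$. The right-hand side expands in the $f^+$-slot (weight $1$) as $q\int(\varsigma^{-1}(y))^*x$, so after cancelling $q$ the whole statement reduces to the single identity $\int x\,y^*=\int(\varsigma^{-1}(y))^*x$. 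The $-$ sign is entirely parallel: $\J(x\,f^-)=-\delta^{-1}x^*f^+$ produces $\delta^{-2}$ from the scalars, the left side now lands in the $f^+$-slot (weight $1$) and the right side in the $f^-$-slot (weight $\mu$) with the prefactor $q^{-1}$, and since $q^{-1}\mu=\delta^{-2}$ both sides again reduce to $\int x\,y^*=\int(\varsigma^{-1}(y))^*x$.

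It then remains to prove this reduced identity, which I would obtain from the two stated properties of $\varsigma$. All the elements involved have grade $0$ (here one uses that $\varsigma$ preserves degree), so they lie in $A=\C_q[S^2]$ and the Haar integral is defined. Applying the twisted-trace relation $\int uv=\int\varsigma(v)u$ with $u=x$ and $v=y^*$ gives $\int x\,y^*=\int\varsigma(y^*)\,x$, and the skew-commutation $\varsigma(y^*)=(\varsigma^{-1}(y))^*$ converts this into $\int(\varsigma^{-1}(y))^*x$, as needed.

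Finally I would record why this refutes isometry: the isometry condition in (1b), in the conjugate-module notation and with $\psi=x\,f^\pm,\ \phi=y\,f^\pm$, reads $\<\!\<\overline{\J(x\,f^\pm)},\J(y\,f^\pm)\>\!\>=\<\!\<\overline{y\,f^\pm},x\,f^\pm\>\!\>$, whereas the formula just proved carries the extra scalar $q^{\pm1}$ together with the twist $y\mapsto\varsigma^{-1}(y)$. Both are trivial exactly when $q=1$, where $\varsigma=\id$; for $q\neq1$ we have $q^{\pm1}\neq1$ and $\varsigma\neq\id$, so isometry fails and instead $\J$ is the stated twisted $q$-isometry. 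The only genuine care in the whole argument lies in the antilinear bookkeeping of the conjugate-module notation---keeping the real scalar $\delta$ on the correct side and combining $\delta^{\pm2}$ with $\mu$---and in picking the correct instance of the twisted trace; once $\mu=q\delta^{-2}$ is used, the computation is mechanical and the twisted-trace identity does all the work.
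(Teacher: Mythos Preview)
Your proof is correct and follows essentially the same route as the paper's: both compute the $A$-valued inner products on each side using the definitions of $\J$ and the weights $1,\mu$ on the $f^\pm$ slots, reduce via $\delta^2\mu=q$ to the identity $\int x\,y^*=\int(\varsigma^{-1}(y))^*x$, and then invoke the twisted-trace property of the Haar integral together with $\varsigma(y^*)=(\varsigma^{-1}(y))^*$. The only cosmetic difference is that the paper first computes $\<\overline{y\,f^\pm},x\,f^\pm\>$ without the twist and then appeals to the twisted trace to insert $\varsigma^{-1}$, whereas you compute the right-hand side with $\varsigma^{-1}$ already in place; the content is identical.
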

\noindent\textbf{Proof:}\quad 
Consider, for $|x|=|y|=\mp1$,
\[
\<\overline{\J(x.f^\pm)},\J(y.f^\pm)\> =  \<\overline{  \pm\delta^{\pm1}\,x^* f^\mp  },  \pm\delta^{\pm1}\,y^* f^\mp  \> 
= \left\{\begin{array}{cc}  \delta^2\,\mu\,x\,y^* & \mathrm{upper\ sign} \\ \delta^{-2}\, x\,y^* & \mathrm{lower\ sign}\end{array}\right.
\]
and compare this to 
\[
\<\overline{y.f^\pm},x.f^\pm\> = \left\{\begin{array}{cc}y^*\,x & \mathrm{upper\ sign} \\  \mu\,y^*\,x & \mathrm{lower\ sign}\end{array}\right.
\]
Given than $\delta^2\mu=q$ and integrating, the twisted trace property tells us that $\J$ is some kind of twisted $q$-isometry in the manner stated. It is clear when $q\ne 1$ that we can fun instances proving that $\CJ$ is not a usual isometry.
\qquad$\largesquare$

Note that the Haar integral is not the only choice. If we take an ordinary trace in the form of a linear map $\tau:A\to \C$ such that $\tau(xy)=\tau(yx)$, we can define
$\<\!\<\ ,\ \>\!\>=\tau\<\ ,\ \>$ in the same way, the above proof shows that we do then have $\CJ$ an isometry when $\mu=\delta^{-2}$, but we would then lose that $\dirac$ is hermitian as this depended on translation invariance in the form of vanishing on a total differential.

\medskip
To summarise, to finish off the algebraic conditions (2)-(6)  we needed $  \alpha = \beta^*\, q^{2}\,\delta^{-2}  $ and for $\dirac$ to be hermitian we need $\alpha= \beta^*\, q\,\mu$ which, given the first condition is equivalent to $\mu=q\delta^{-2}$. 
However we cannot in general make $\J$ an isometry when $q\ne 1$. Finally, by rescaling of the $f^\pm$ while preserving the form of our other constructions (this requires $f^+$ to change by at most a phase), we can without loss of generality set $\alpha=1$ and then have only one free parameter $\beta>0$ in our above construction, with $\delta=\sqrt{\beta} q$ and $\mu=\beta^{-1} q^{-1}$ uniquely determined up to the sign of $\delta$. Thus we have a 1-parameter moduli of Dirac operators under the above construction, with $\beta=1$ recovering the $\dirac$ introduced in \cite{Ma:rieq}. 

Considering the results on Dirac operators on the non commutative sphere in \cite{DLPS-sphere}, this should not come as a surprise that we cannot obey all the conditions. There the conclusion was to sacrifice the bimodule condition, which is (4) on our list, and replace it by the commutator being a compact operator. However we have kept all the algebraic conditions including the bimodule condition and  $\nabla_\CS$ preserving $j$,  kept $\dirac$ being hermitian and dropped only that $\CJ$ is an isometry in favour of some twisted $q$-version of that.

\subsection{A Dirac operator on the quantum disk}  \label{diskdir}
There is an algebra of functions on a `deformed disk' 
 $A=\C_q[D]$,  \index{deformed disk, $\C_q[D]$} generated by $z$ and $\bar z$
with commutation relation $z\bar z=q^{-2}\bar zz-q^{-2}+1$ and involution $z^*=\bar z$ with $q$ real and nonzero  (see \cite{klilesdisk}). 
The algebra  $\C_q[D]$ is $\mathbb{Z}$ graded, by $|z|=1$ and $|\bar z|=-1$. 
 Putting $\Y =1-\bar zz$, we have $z\Y =q^{-2}\Y z$ and $\bar z\Y =q^{2}\Y \bar z$, so for any polynomial $p(\Y )$
\[
z.p(\Y )=p(q^{-2}\Y ).z\ ,\quad \bar z.p(\Y )=p(q^{2}\Y ).\bar z\ .
\]

There is a differential calculus given by
\begin{eqnarray*}
\extd z\wedge \extd \bar z=-q^{-2}\, \extd \bar z\wedge \extd z\ &,& z. \extd z=q^{-2}\, \extd z. z\ ,\quad 
z. \extd \bar z=q^{-2}\, \extd \bar z. z\ ,\cr  \extd z\wedge \extd z=\extd \bar z\wedge \extd \bar z=0  &,&
\bar z.  \extd z=q^{2}\, \extd z. \bar z\ ,\quad \bar z.  \extd \bar z=q^{2}\, \extd \bar
z.  \bar z\ .
\end{eqnarray*} \label{dpolydisk}
A proof by induction on powers of $\Y $ gives, for any polynomial $p(\Y )$,
\begin{align} \label{missedp}
\extd p(\Y ) =&\ q^2  \frac{p(q^{-2}\Y )-p(\Y )}{\Y (1-q^{-2})} z\, \extd \bar z\ + 
 \frac{p(q^{2}\Y )-p(\Y )}{\Y (1-q^{2})}\bar z\,  \extd z\ .
\end{align}

Recall that the $*$-Hopf algebra $U_q(su_{1,1})$ is defined by generators $X_+,X_-$ and an invertible grouplike generator $q^{H\over 2}$ with 
\[
 q^{H\over 2}X_\pm q^{-{H\over 2}}=q^{\pm 1}X_\pm,\quad [X_+,X_-]={q^H-q^{-H}\over q-q^{-1}},\quad \Delta X_\pm=X_\pm\tens q^{H\over 2}+
q^{-{H\over 2}}\tens X_\pm   \]
and  $*$-structure $X_+^*= -X_-$,  $(q^{H\over 2})^*=q^{H\over 2}$ (we follow the conventions of \cite{Ma:book}). 
There is a left action of $U_q(su_{1,1})$ on $\C_q[D]$ (similar to that given by \cite{klilesdisk}, but adjusted to be unitary in the sense of
\cite{Ma:book}, i.e. $(h \la a)^*= S(h^*)\la a^* $) given by
\begin{align*}
&X_{\pm}\la 1=0\ ,\ q^{H\over 2}\la 1=1\ ,\ q^{H\over 2}\la z=q^{-1}z\ ,\ q^{H\over 2}\la \bar z=q\,\bar z\ ,\cr
&X_+\la  z = q^{-1/2}  \ , 
X_+\la\bar z= -q^{-1/2} \bar z^2
\ ,\ 
X_- \la \bar z= q^{1/2}
\ , 
X_- \la z= -q^{1/2} z^2 \ .
\end{align*}
This action extends to the calculus by
\begin{align*}
&q^{H\over 2}\la \extd z=q^{-1}\extd z\ ,\ q^{H\over 2}\la \extd\bar z=q\,\extd\bar z\ ,\ X_+\la \extd z=0\ ,\ X_-\la \extd\bar z=0\ ,\cr
&X_+\la \extd\bar z=-q^{-1/2}(\extd\bar z\,\bar z+\bar z\,\extd\bar z)\ ,\ X_-\la z=-q^{1/2}(z\,\extd z+\extd z\,z)\ .
\end{align*}

Now we consider integration on the deformed disk. In \cite{klilesdisk} an integral is given which has classical limit the Lebesgue integral on the unit disk, and is subsequently used to examine noncommutative function theory on the disk. However we use another integral with $U_q(su_{1,1})$ invariance.
A partially defined map $\int:\C_q[D]\to \C$, invariant for the $U_q(su_{1,1})$ action, is defined by
\[
\int  \Y^{n+1} =\frac{1}{[n]_{q^{-2}}} \ ,\quad n\ge 1\ ,
\]
and $\int$ applied to any monomial of nonzero grade gives zero. (We shall not go into detail over the domain of this integral.) To spell the invariance out explicitly, we require that the following diagram commutes:
\[
\xymatrix{
 U_q(su_{1,1}) \tens \C_q[D]  \ar[r]^{\ \ \ \quad  \la }  \ar[d]^{\id\tens\int }  &  \C_q[D]  \ar[d]^\int  \\
   U_q(su_{1,1}) \tens \C  \ar[r]^{\ \quad \epsilon\tens\id}   &  \C
  }
\]
We take care that there are two conflicting views of what is going on with the quantum disk. As a unital $C^*$ algebra, $\C_q[D]$ corresponds to a deformation of a compact topological space, the closed unit disk. Classically this is not a manifold, but it is a manifold with boundary. However the $U_q(sl_2)$ action is taking us in quite a different direction - classically it corresponds to the M\"obius action on the open disk, and as such its invariants are really related to hyperbolic space, rather than the closed unit disk. The problem with the integral is simply that the classical volume of hyperbolic space, under its usual invariant measure, is infinite.
Recalling that $\Y =1-\bar zz$, we have
\begin{align*}
q^{1/2}X_+\la \Y =&\ q^{-1}\bar z^2z-q^{-1}\bar z= - q^{-1}\bar z\Y \ ,\cr
q^{-1/2}X_-\la \Y =&\ - q^{-1}z+q^{-1}\bar zz^2=-q^{-1}\Y z\ ,\cr
q^{H\over 2}\la \Y =&\ 1-(q^{H\over 2}\la\bar z)( q^{H\over 2}\la z)=\Y \ .
\end{align*}
and induction gives, 
\[
q^{1/2}X_+\la \Y^n=-q^{-1}\bar z[n]_{q^{-2}}\Y^n\ ,\quad n\ge 0\ .
\]
Then
\begin{align*}
q^{1/2}X_+\la(z\Y^n)=&\ \Y^n-z\bar z[n]_{q^{-2}}\Y^n  \cr
=&\ -\ q^{-2}  [n-1]_{q^{-2}} \Y^n+q^{-2}   [n]_{q^{-2}}   \Y^{n+1}  \ .
\end{align*}
By invariance, the integral applied to this should give zero, so we get
\[
\int  [n-1]_{q^{-2}} \Y^n=\int   [n]_{q^{-2}}   \Y^{n+1} \ ,
\]
which, on choosing a normalisation, gives the formula we gave for the integral. 

We next show that the integral is a twisted trace, in the sense
\[
\int a\, b=\int \varsigma(b)\,a\ ,\quad \forall a,b \in \C_q[D]
\]
for the degree algebra automorphism $\varsigma(b)=q^{2|b|}\,b$ on homogeneous elements. This can be shown for $b=z$ by $\int az=0$ unless $a$ has degree $-1$, and in the $-1$ case writing 
$a=\bar z\,p(w)$ for a polynomial $p$. The result is then checked by explicit calculation. A straightforward inductive argument then extends this result to  $b=z^m$ and similarly for negative degrees. 

Next we take generators $\{s,\bar s\}$ of a spinor bimodule $\mathcal{S}$, with relations for homogenous $a\in \C_q[D]$,
\[
s.a=q^{|a|}\,a.s\ ,\quad \bar s.a=q^{|a|}\,a.\bar s\ 
\]
where the power of $q$ in the commutation relations is half that for the relations with $\extd z,
\bar\extd z$. Suppose that $\nabla_\mathcal{S}(s)=0$ and $\nabla_\mathcal{S}(\bar s)=0$, then 
define 
\[
\extd z\la \bar s=\alpha\,\Y \,s\ ,\ \extd\bar z\la  s=\beta\,\Y \,\bar s\ ,\ 
\extd z\la  s=0\ ,\ \extd\bar z\la  \bar s=0\ 
\]
for two parameters $\alpha,\beta$. We have
\begin{align*}
\sigma_\mathcal{S}(s\tens\extd a) =&\ \nabla_\mathcal{S}(s.a)-\nabla_\mathcal{S}(s).a
=  q^{|a|} \nabla_\mathcal{S}(a.s)-\nabla_\mathcal{S}(s).a = q^{|a|} \extd a\tens s\ ,\cr
\sigma_\mathcal{S}(\bar s\tens\extd a) =&\ q^{|a|} \extd a\tens \bar s\ .
\end{align*}
Then the Dirac operator is, writing $\extd a=\tfrac{\partial a}{\partial z} \extd z+\tfrac{\partial a}{\partial \bar z} \extd \bar z$, 
\begin{align*}
\dirac(a.s)=&\ (\tfrac{\partial a}{\partial z}\extd z+\tfrac{\partial a}{\partial \bar z}\extd \bar z)\la s=\beta\,(\tfrac{\partial a}{\partial \bar z})\,\Y \,\bar s\ ,\cr
\dirac(a.\bar s)=&\ (\tfrac{\partial a}{\partial z}\extd z+\tfrac{\partial a}{\partial \bar z}\extd \bar z)\la \bar s=\alpha\,(\tfrac{\partial a}{\partial z})\,\Y \,s\ .
\end{align*}
We set $\gamma(s)=s$ and $\gamma(\bar s)=-\bar s$. 

Next we set $\mathcal{J}(s)=\delta\bar s$ and $\mathcal{J}(\bar s)=-\delta^{-1} s$ for some real parameter $\delta$,  giving $\eps=-1$ and $\eps''=-1$. Since $\nabla_S$ on the generators is zero, it is clear that 
%\[
%(\id\tens j)\nabla_{\mathcal{S}}(sa)=\sigma_{\mathcal{S}}\tilde\nabla_{\overline{\mathcal{S}}}(j(sa))\ ,
%\]
the connection preserves $j$. We compute using the definitions and commutation rules above that
\begin{align*} (\la)\sigma_S(\J(a s)\tens \extd z)=& \delta(\la) \sigma_S(\bar s a^*\tens \extd z) = \delta q q^{|a^*|}  (\la)(a^*\extd z\tens \bar s)=\delta q q^{|a^*|}  a^*\alpha\Y s\\
\J(\extd \bar z\la a s)=& q^{2|a|}\J( a\extd\bar z\la s)=q^{2|a|}\J( \beta a \Y \bar s)=\beta^* \J( \Y  a \bar s)=\beta^* (-\delta^{-1})s (\Y a)^*\\
=&-\beta^*\delta^{-1}q^{|a^*|}a^*\Y s\ 
\end{align*}
for all $a$ of homogeneous degree. Similarly for the other cases. Hence (\ref{lapreserves}) holds with $\eps'=1$ provided 
\begin{equation}\label{qdiskparamJ} \delta^2 q\alpha=-\beta^*.\end{equation}
We assume this and Theorem~\ref{sptripres} then tells us that (2)-(6) hold  with  dimension $n=2$. 

Finally, we 
define a positive hermitian inner product $\<,\>:\overline{ \S}\tens_A \S \to A$ by the following, for some $\mu>0$,
\[
\<\overline{ s\,a_++\bar s\, a_-}, s\,b_++ \bar s\,b_-  \>=a_+{}^*\,\Y \,b_+ + \mu\, a_-{}^*\,\Y \,b_- \ 
\]
for all $a_\pm,b_\pm\in A$. Now we have, 
\begin{align*}
\<\overline{ \dirac(a.\bar s)}, b. s  \> =&\ \<\overline{ \alpha\,\tfrac{\partial a}{\partial z}\,\Y \,s  }, b. s  \> =\alpha^*q^{-|b|-|\tfrac{\partial a}{\partial z}|}
\<\overline{ s\,\tfrac{\partial a}{\partial z}\,\Y   },  s.b  \>  \cr
=&\ \alpha^*q^{-|b|-|\tfrac{\partial a}{\partial z}|}  \Y \, \tfrac{\partial a}{\partial z}{}^*\, \Y     \, b
= \alpha^*q^{|b|-|\tfrac{\partial a}{\partial z}|}  \Y \, \tfrac{\partial a}{\partial z}{}^*b\, \Y     \ ,\cr
\<\overline{ a.\bar s}, \dirac(b. s)  \> =&\ \<\overline{ a.\bar s}, \beta\,\tfrac{\partial b}{\partial \bar z}\,\Y \,\bar s  \> = \beta\, q^{ -|a|-|\frac{\partial b}{\partial \bar z}|  }
\<\overline{ \bar s.a}, \bar s \, \tfrac{\partial b}{\partial \bar z}\,\Y   \> \cr
=&\ \mu\, \beta\, q^{ -|a|-|\frac{\partial b}{\partial \bar z}|  }a^*\,  \Y     \, \tfrac{\partial b}{\partial \bar z}\,\Y  
= \mu\, \beta\, q^{ |a|-|\frac{\partial b}{\partial \bar z}|  } \Y     \, a^*\, \tfrac{\partial b}{\partial \bar z}\,\Y  \ .
\end{align*}
If we define the complex valued inner product $\<\!\<,\>\!\>$ by the integral of $\<,\>$, then as the integral vanishes unless the grade is zero we find
\begin{align*}
\<\!\<\overline{ \dirac(a.\bar s)}, b. s  \>\!\>
=&\ \alpha^*\int \Y \, \tfrac{\partial a}{\partial z}{}^*b\, \Y     \ ,\cr
\<\!\<\overline{ a.\bar s}, \dirac(b. s)  \>\!\> 
=&\ \mu\, \beta \int \Y     \, a^*\, \tfrac{\partial b}{\partial \bar z}\,\Y  \ .
\end{align*}
Now
\begin{align*}
\extd(a^*b)=&\ (\tfrac{\partial a}{\partial z}\,\extd z+\tfrac{\partial a}{\partial \bar z}\, \extd \bar z)^*b+a^*(\tfrac{\partial b}{\partial z}\,\extd z+\tfrac{\partial b}{\partial \bar z}\,\extd \bar z)=
(\extd\bar z\,\tfrac{\partial a}{\partial z}{}^*+\extd  z\,\tfrac{\partial a}{\partial \bar z}{}^*)b+a^*(\tfrac{\partial b}{\partial z}\,\extd z+\tfrac{\partial b}{\partial \bar z} \, \extd \bar z)\ ,
\end{align*}
so we get (using the previous notation) $\tfrac{\partial a^*b}{\partial \bar z}=q^{2(|b|-|\tfrac{\partial a}{\partial z}|)}\tfrac{\partial a}{\partial z}{}^*b+a^*\tfrac{\partial b}{\partial \bar z}$, so if $\mu\, \beta=-\alpha^*$ which, given (\ref{qdiskparamJ}) is equivalent to
\begin{equation}\label{qdiskparam}\mu =q^{-1}\delta^{-2}, \end{equation}
 we get
\[
\<\!\<\overline{ \dirac(a.\bar s)}, b. s  \>\!\> - \<\!\<\overline{ a.\bar s}, \dirac(b. s)  \>\!\> =
\alpha^*\int \Y \,  \tfrac{\partial a^*b}{\partial \bar z}\, \Y   \ .
\]
With this choice, to show that $\dirac$ is hermitian all we need to show is that for all $a$ with $|a|=-1$ we have
\[
\int \Y \, \tfrac{\partial a}{\partial \bar z}\, \Y =0\ .
\]
We set $a=\bar z\Y^m$ for some $m\ge 1$, and then $\frac{\partial a}{\partial \bar z}\,\extd \bar z=\extd \bar z\, \Y^m+\bar z\,
 \frac{\partial (\Y^m)}{\partial \bar z}\,\extd\bar z$, so
$\frac{\partial a}{\partial \bar z}=\Y^m+\bar z\,  \frac{\partial (\Y^m)}{\partial \bar z}$. Now from (\ref{missedp}),
\begin{align*}
\tfrac{\partial a}{\partial \bar z}=&\ \Y^m-q^2\,\bar zz\,[m]_{q^2}\,\Y^m \cr
=&\ \Y^m+q^2\,\Y\,[m]_{q^2}\,\Y^{m-1}-q^2\,[m]_{q^2}\,\Y^{m-1} \cr
=&\ [m+1]_{q^2}\Y^m-q^2\,[m]_{q^2}\Y^{m-1}\ ,\cr
q^{-2m} \tfrac{\partial a}{\partial \bar z}=&\ [m+1]_{q^{-2}}\Y^m-[m]_{q^{-2}}\Y^{m-1}\ .
\end{align*}
Now the formula for the integral shows that $\dirac$ is hermitian  (The use of the hermitian property for the inner product means that we have checked this for all cases.). However note that if we were to set $a=\bar z$, then the condition would not be satisfied. There is a condition on the domain of $\dirac$ which would classically include functions vanishing on the boundary of the disk -- we shall not pursue this matter further here. 

 The condition that $\mathcal{J}$ is an isometry would require equality under the integral of
 \[ \<\overline{\J(a s)}, \J(bs)\>=\mu \delta^2 a \Y b^*=q^{-1}a\Y b^*,\quad \<\overline{bs},as\>=q^{-|a|-|b|}b^* \Y a\]
on homogeneous elements. The second expression integrates to zero unless $|a|=|b|$ so gives $\varsigma(b^*)\Y a$. From this and the above twisted trace
property of the integral, we can conclude that 
\[  \<\!\<\overline{\J (as)},\J(bs)\>\!\>= q^{-1}\<\!\< \overline{\varsigma^{-1} (b)s},as\>\!\>,\quad \forall a,b\in \C_q[D]\]
much as in the spirit of the $q$-sphere example. The same applies with $s$ replaced by $\bar s$ and $q^{-1}$ by $q$.  Finally, we have some freedom
to rescale the generators $s,\bar s$ and using this we can without loss of generality assume $\alpha=1$ and $\beta<0$ say when $q>0$. In that
case we have a 1-parameter family of Dirac operators by our construction with $\delta=\sqrt{-q^{-1}\beta}, \mu=-\beta^{-1}$.

\section{Holomorphic bimodules and Chern connections}
An integrable almost complex structure (see \cite{BegSmiComplex}) on a star algebra $A$  with 
star differential calculus $(\Omega,\extd,\wedge)$ is a bimodule map $J:\Omega^1\to \Omega^1$ with $J^2=-\id$ obeying certain conditions. This basically is a decomposition of bimodules $\Omega^n=\oplus_{p+q=n}
\Omega^{p,q}$ with $\extd=\pd+\pdol$ where $\pd:\Omega^{p,q}\to \Omega^{p+1,q}$ and 
$\pdol:\Omega^{p,q}\to \Omega^{p,q+1}$ with $\pd^2=\pdol{}^2=0$ and $\Omega^{p,q}\wedge\Omega^{p',q'}\subset \Omega^{p+p',q+q'}$ . The direct sum gives projection maps 
$\pi^{p,q}:\Omega^{p+q}\to\Omega^{p,q}$.

We can then define the notion of a holomorphic bundle $E$ but note that in \cite{BegSmiComplex} the basic definition of this is given in terms of a left holomorphic section. This awkwardly lends itself to looking at hermitian inner products with the antilinear side on the right, which is opposite to the usual convention for the Dirac operator. However, in the spinor bundle case we can use the antilinear isomorphism $j:
\CS\to\overline{\CS}$ to swap the side of the antilinear part by commutativity of the following diagram:
\begin{align} \label{sideswitch}
\xymatrix{
\overline{\CS}\otimes_A \CS \ar[r]^{\<,\>}    & A \\
\CS \otimes_A \overline{\CS} \ar[ur]_{\<,\>} \ar[u]^{j\tens j^{-1}} 
  }
\end{align}
In general there is no necessity for a holomorphic bundle to have any obvious antilinear isomorphism. 
 
\subsection{Holomorphic bimodules}

  A left $\pdol$-connection $\pdol_E:E\to \Omega^{0,1}A\tens_A E$ on a left module $E$ is a linear map satisfying the left $\pdol$-Liebniz rule, for $e\in E$ and $a\in A$
\[
\pdol_E(a.e)=\pdol a\tens e+a.\pdol_E(e)\ .
\]

\begin{definition}\cite{BegSmiComplex} 
A holomorphic structure on a left $A$-module
 $E$ is is a left $\pdol$-connection $\pdol_E:E\to \Omega^{0,1}A\tens_A E$ with vanishing holomorphic curvature, i.e.\
 $(\pdol\tens\id-\id\wedge\pdol_E)\pdol_E:E\to \Omega^{0,2}A\tens_A E$ vanishes. Then we call $(E,\pdol_E)$ a left holomorphic module.
 If in addition there is a bimodule map 
$\sigma_E:E\tens_A\Omega^{0,1}A\to \Omega^{0,1}A\tens_A E$ so that
$\pdol_E(e.a)=\pdol_E(e).a+\sigma_E(e\tens\pdol a)$ for all $a\in A$ and $e\in E$, we say that $(E,\pdol_E,\sigma_E)$ is a left holomorphic bimodule. 
\end{definition}

Now suppose that we have a hermitian inner product on a holomorphic left module. Classically the $\pdol$-connection given by the holomorphic structure can be extended to a unique connection $\nabla_E:E\to\Omega^1\tens_A E$ preserving the hermitian inner product and with curvature only mapping to $\Omega^{1,1}\tens_A E$, the Chern connection. We shall show that this is also the case in noncommutative geometry. 
We give two constructions of the same connection, as both has its advantages.
For one we require some material on duals and coevaluations, and for the other we consider Christoffel symbols.

We begin with a hermitian inner product on $E$, $\<,\>:E\tens_A\overline{E}\to A$, and 
 we say that the left connection $\nabla_E:E\to \Omega^{1}A\tens_A E$ preserves the metric if
\begin{eqnarray} \label{hermcon1}
\extd\,\<,\>=(\id\tens\<,\>)(\nabla_E\tens\id)+(\<,\>\tens\id)(\id\tens \tilde\nabla):E\tens  \overline{E}\to \Omega^1A\ ,
\end{eqnarray}
where $\tilde\nabla:\overline{E}\to \overline{E}\tens_A\Omega^1A$ is the right connection constructed from $\nabla_E$ by
$\tilde\nabla(\overline{e})=\overline{f}\tens\kappa^*$, where $\nabla_E(e)=\kappa\tens f$.

\subsection{Duals and coevaluations}
For an $A$-bimodule $E$ we use the notation $E^\circ={}_A\Hom(E,A)$, and there is an evaluation bimodule map
$\ev_E:E\tens_A E^\circ\to A$. If $E$ is finitely generated projective (fgp for short) as a left module,  then there is a coevaluation map
$\coev_E:A\to E^\circ\tens_A E$ (written items of a `dual basis') so that
\begin{eqnarray*}
(\id\tens\ev_E)(\coev_E\tens \id)=\id:E^\circ\to E^\circ\ ,\quad (\ev_E\tens\id)(\id\tens \coev_E)=\id:E\to E\ .
\end{eqnarray*}

Choosing a side, we suppose that $\<,\>:E\tens_A\overline{E}\to A$ is a hermitian inner product on $E$ (and therefore that it is a bimodule map). This hermitian inner product  is called non-degenerate if there is a bimodule isomorphism $G:\overline{E}\to E^\circ$ so that
$\<,\>=\ev\circ(\id\tens G)$. In this case we can define an `inverse inner product' $\<,\>^{-1}:A\to \overline{E}\tens_A E$ by $\<,\>^{-1}=(G^{-1}\tens\id)\coev_E$.

\begin{proposition} \label{unichernnog}
Suppose that  $(E,\pdol_E)$ is a holomorphic left module, where $E$ is finitely generated projective as a left $A$-module
and  $\<,\>:E\tens\overline{E}\to A$ is a non-degenerate hermitian inner product on $E$. Then there is a unique left connection
$\nabla_E:E\to\Omega^1 \tens_A E$,  preserving the hermitian metric and obeying
$(\pi^{0,1}\tens\id)\nabla_E=\pdol_E$.

The $(\pi^{1,0}\tens\id)\nabla_E=\pd_E$ part is given by the following diagram, where $\tilde\pd: \overline{E}\to
\overline{E}\tens_A \Omega^{1,0}$ is the right $\pd$-covariant derivative defined by $\tilde\pd(\overline{e})=\overline{f}\tens \kappa^*$, where $\kappa\tens f=\pdol_E(e)$: 

\unitlength 0.5 mm
\begin{picture}(180,60)(-10,27)
\linethickness{0.3mm}
\put(20,60){\line(0,1){25}}
\linethickness{0.3mm}
\multiput(29.99,50.5)(0.01,-0.5){1}{\line(0,-1){0.5}}
\multiput(29.95,51)(0.04,-0.5){1}{\line(0,-1){0.5}}
\multiput(29.89,51.49)(0.06,-0.49){1}{\line(0,-1){0.49}}
\multiput(29.8,51.98)(0.09,-0.49){1}{\line(0,-1){0.49}}
\multiput(29.69,52.47)(0.11,-0.49){1}{\line(0,-1){0.49}}
\multiput(29.56,52.95)(0.14,-0.48){1}{\line(0,-1){0.48}}
\multiput(29.4,53.42)(0.16,-0.47){1}{\line(0,-1){0.47}}
\multiput(29.21,53.88)(0.09,-0.23){2}{\line(0,-1){0.23}}
\multiput(29.01,54.34)(0.1,-0.23){2}{\line(0,-1){0.23}}
\multiput(28.78,54.78)(0.11,-0.22){2}{\line(0,-1){0.22}}
\multiput(28.53,55.21)(0.12,-0.22){2}{\line(0,-1){0.22}}
\multiput(28.26,55.63)(0.14,-0.21){2}{\line(0,-1){0.21}}
\multiput(27.97,56.04)(0.15,-0.2){2}{\line(0,-1){0.2}}
\multiput(27.66,56.43)(0.1,-0.13){3}{\line(0,-1){0.13}}
\multiput(27.33,56.8)(0.11,-0.12){3}{\line(0,-1){0.12}}
\multiput(26.98,57.16)(0.12,-0.12){3}{\line(0,-1){0.12}}
\multiput(26.62,57.5)(0.12,-0.11){3}{\line(1,0){0.12}}
\multiput(26.23,57.82)(0.13,-0.11){3}{\line(1,0){0.13}}
\multiput(25.84,58.12)(0.13,-0.1){3}{\line(1,0){0.13}}
\multiput(25.43,58.4)(0.21,-0.14){2}{\line(1,0){0.21}}
\multiput(25,58.66)(0.21,-0.13){2}{\line(1,0){0.21}}
\multiput(24.56,58.9)(0.22,-0.12){2}{\line(1,0){0.22}}
\multiput(24.11,59.12)(0.22,-0.11){2}{\line(1,0){0.22}}
\multiput(23.65,59.31)(0.23,-0.1){2}{\line(1,0){0.23}}
\multiput(23.18,59.48)(0.47,-0.17){1}{\line(1,0){0.47}}
\multiput(22.71,59.63)(0.48,-0.15){1}{\line(1,0){0.48}}
\multiput(22.23,59.75)(0.48,-0.12){1}{\line(1,0){0.48}}
\multiput(21.74,59.85)(0.49,-0.1){1}{\line(1,0){0.49}}
\multiput(21.24,59.92)(0.49,-0.07){1}{\line(1,0){0.49}}
\multiput(20.75,59.97)(0.5,-0.05){1}{\line(1,0){0.5}}
\multiput(20.25,60)(0.5,-0.02){1}{\line(1,0){0.5}}
\put(19.75,60){\line(1,0){0.5}}
\multiput(19.25,59.97)(0.5,0.02){1}{\line(1,0){0.5}}
\multiput(18.76,59.92)(0.5,0.05){1}{\line(1,0){0.5}}
\multiput(18.26,59.85)(0.49,0.07){1}{\line(1,0){0.49}}
\multiput(17.77,59.75)(0.49,0.1){1}{\line(1,0){0.49}}
\multiput(17.29,59.63)(0.48,0.12){1}{\line(1,0){0.48}}
\multiput(16.82,59.48)(0.48,0.15){1}{\line(1,0){0.48}}
\multiput(16.35,59.31)(0.47,0.17){1}{\line(1,0){0.47}}
\multiput(15.89,59.12)(0.23,0.1){2}{\line(1,0){0.23}}
\multiput(15.44,58.9)(0.22,0.11){2}{\line(1,0){0.22}}
\multiput(15,58.66)(0.22,0.12){2}{\line(1,0){0.22}}
\multiput(14.57,58.4)(0.21,0.13){2}{\line(1,0){0.21}}
\multiput(14.16,58.12)(0.21,0.14){2}{\line(1,0){0.21}}
\multiput(13.77,57.82)(0.13,0.1){3}{\line(1,0){0.13}}
\multiput(13.38,57.5)(0.13,0.11){3}{\line(1,0){0.13}}
\multiput(13.02,57.16)(0.12,0.11){3}{\line(1,0){0.12}}
\multiput(12.67,56.8)(0.12,0.12){3}{\line(0,1){0.12}}
\multiput(12.34,56.43)(0.11,0.12){3}{\line(0,1){0.12}}
\multiput(12.03,56.04)(0.1,0.13){3}{\line(0,1){0.13}}
\multiput(11.74,55.63)(0.15,0.2){2}{\line(0,1){0.2}}
\multiput(11.47,55.21)(0.14,0.21){2}{\line(0,1){0.21}}
\multiput(11.22,54.78)(0.12,0.22){2}{\line(0,1){0.22}}
\multiput(10.99,54.34)(0.11,0.22){2}{\line(0,1){0.22}}
\multiput(10.79,53.88)(0.1,0.23){2}{\line(0,1){0.23}}
\multiput(10.6,53.42)(0.09,0.23){2}{\line(0,1){0.23}}
\multiput(10.44,52.95)(0.16,0.47){1}{\line(0,1){0.47}}
\multiput(10.31,52.47)(0.14,0.48){1}{\line(0,1){0.48}}
\multiput(10.2,51.98)(0.11,0.49){1}{\line(0,1){0.49}}
\multiput(10.11,51.49)(0.09,0.49){1}{\line(0,1){0.49}}
\multiput(10.05,51)(0.06,0.49){1}{\line(0,1){0.49}}
\multiput(10.01,50.5)(0.04,0.5){1}{\line(0,1){0.5}}
\multiput(10,50)(0.01,0.5){1}{\line(0,1){0.5}}

\linethickness{0.3mm}
\put(60,60){\line(0,1){25}}
\linethickness{0.3mm}
\multiput(60,60)(0.01,-0.5){1}{\line(0,-1){0.5}}
\multiput(60.01,59.5)(0.04,-0.5){1}{\line(0,-1){0.5}}
\multiput(60.05,59)(0.06,-0.49){1}{\line(0,-1){0.49}}
\multiput(60.11,58.51)(0.09,-0.49){1}{\line(0,-1){0.49}}
\multiput(60.2,58.02)(0.11,-0.49){1}{\line(0,-1){0.49}}
\multiput(60.31,57.53)(0.14,-0.48){1}{\line(0,-1){0.48}}
\multiput(60.44,57.05)(0.16,-0.47){1}{\line(0,-1){0.47}}
\multiput(60.6,56.58)(0.09,-0.23){2}{\line(0,-1){0.23}}
\multiput(60.79,56.12)(0.1,-0.23){2}{\line(0,-1){0.23}}
\multiput(60.99,55.66)(0.11,-0.22){2}{\line(0,-1){0.22}}
\multiput(61.22,55.22)(0.12,-0.22){2}{\line(0,-1){0.22}}
\multiput(61.47,54.79)(0.14,-0.21){2}{\line(0,-1){0.21}}
\multiput(61.74,54.37)(0.15,-0.2){2}{\line(0,-1){0.2}}
\multiput(62.03,53.96)(0.1,-0.13){3}{\line(0,-1){0.13}}
\multiput(62.34,53.57)(0.11,-0.12){3}{\line(0,-1){0.12}}
\multiput(62.67,53.2)(0.12,-0.12){3}{\line(0,-1){0.12}}
\multiput(63.02,52.84)(0.12,-0.11){3}{\line(1,0){0.12}}
\multiput(63.38,52.5)(0.13,-0.11){3}{\line(1,0){0.13}}
\multiput(63.77,52.18)(0.13,-0.1){3}{\line(1,0){0.13}}
\multiput(64.16,51.88)(0.21,-0.14){2}{\line(1,0){0.21}}
\multiput(64.57,51.6)(0.21,-0.13){2}{\line(1,0){0.21}}
\multiput(65,51.34)(0.22,-0.12){2}{\line(1,0){0.22}}
\multiput(65.44,51.1)(0.22,-0.11){2}{\line(1,0){0.22}}
\multiput(65.89,50.88)(0.23,-0.1){2}{\line(1,0){0.23}}
\multiput(66.35,50.69)(0.47,-0.17){1}{\line(1,0){0.47}}
\multiput(66.82,50.52)(0.48,-0.15){1}{\line(1,0){0.48}}
\multiput(67.29,50.37)(0.48,-0.12){1}{\line(1,0){0.48}}
\multiput(67.77,50.25)(0.49,-0.1){1}{\line(1,0){0.49}}
\multiput(68.26,50.15)(0.49,-0.07){1}{\line(1,0){0.49}}
\multiput(68.76,50.08)(0.5,-0.05){1}{\line(1,0){0.5}}
\multiput(69.25,50.03)(0.5,-0.02){1}{\line(1,0){0.5}}
\put(69.75,50){\line(1,0){0.5}}
\multiput(70.25,50)(0.5,0.02){1}{\line(1,0){0.5}}
\multiput(70.75,50.03)(0.5,0.05){1}{\line(1,0){0.5}}
\multiput(71.24,50.08)(0.49,0.07){1}{\line(1,0){0.49}}
\multiput(71.74,50.15)(0.49,0.1){1}{\line(1,0){0.49}}
\multiput(72.23,50.25)(0.48,0.12){1}{\line(1,0){0.48}}
\multiput(72.71,50.37)(0.48,0.15){1}{\line(1,0){0.48}}
\multiput(73.18,50.52)(0.47,0.17){1}{\line(1,0){0.47}}
\multiput(73.65,50.69)(0.23,0.1){2}{\line(1,0){0.23}}
\multiput(74.11,50.88)(0.22,0.11){2}{\line(1,0){0.22}}
\multiput(74.56,51.1)(0.22,0.12){2}{\line(1,0){0.22}}
\multiput(75,51.34)(0.21,0.13){2}{\line(1,0){0.21}}
\multiput(75.43,51.6)(0.21,0.14){2}{\line(1,0){0.21}}
\multiput(75.84,51.88)(0.13,0.1){3}{\line(1,0){0.13}}
\multiput(76.23,52.18)(0.13,0.11){3}{\line(1,0){0.13}}
\multiput(76.62,52.5)(0.12,0.11){3}{\line(1,0){0.12}}
\multiput(76.98,52.84)(0.12,0.12){3}{\line(0,1){0.12}}
\multiput(77.33,53.2)(0.11,0.12){3}{\line(0,1){0.12}}
\multiput(77.66,53.57)(0.1,0.13){3}{\line(0,1){0.13}}
\multiput(77.97,53.96)(0.15,0.2){2}{\line(0,1){0.2}}
\multiput(78.26,54.37)(0.14,0.21){2}{\line(0,1){0.21}}
\multiput(78.53,54.79)(0.12,0.22){2}{\line(0,1){0.22}}
\multiput(78.78,55.22)(0.11,0.22){2}{\line(0,1){0.22}}
\multiput(79.01,55.66)(0.1,0.23){2}{\line(0,1){0.23}}
\multiput(79.21,56.12)(0.09,0.23){2}{\line(0,1){0.23}}
\multiput(79.4,56.58)(0.16,0.47){1}{\line(0,1){0.47}}
\multiput(79.56,57.05)(0.14,0.48){1}{\line(0,1){0.48}}
\multiput(79.69,57.53)(0.11,0.49){1}{\line(0,1){0.49}}
\multiput(79.8,58.02)(0.09,0.49){1}{\line(0,1){0.49}}
\multiput(79.89,58.51)(0.06,0.49){1}{\line(0,1){0.49}}
\multiput(79.95,59)(0.04,0.5){1}{\line(0,1){0.5}}
\multiput(79.99,59.5)(0.01,0.5){1}{\line(0,1){0.5}}

\linethickness{0.3mm}
\multiput(99.99,60.5)(0.01,-0.5){1}{\line(0,-1){0.5}}
\multiput(99.95,61)(0.04,-0.5){1}{\line(0,-1){0.5}}
\multiput(99.89,61.49)(0.06,-0.49){1}{\line(0,-1){0.49}}
\multiput(99.8,61.98)(0.09,-0.49){1}{\line(0,-1){0.49}}
\multiput(99.69,62.47)(0.11,-0.49){1}{\line(0,-1){0.49}}
\multiput(99.56,62.95)(0.14,-0.48){1}{\line(0,-1){0.48}}
\multiput(99.4,63.42)(0.16,-0.47){1}{\line(0,-1){0.47}}
\multiput(99.21,63.88)(0.09,-0.23){2}{\line(0,-1){0.23}}
\multiput(99.01,64.34)(0.1,-0.23){2}{\line(0,-1){0.23}}
\multiput(98.78,64.78)(0.11,-0.22){2}{\line(0,-1){0.22}}
\multiput(98.53,65.21)(0.12,-0.22){2}{\line(0,-1){0.22}}
\multiput(98.26,65.63)(0.14,-0.21){2}{\line(0,-1){0.21}}
\multiput(97.97,66.04)(0.15,-0.2){2}{\line(0,-1){0.2}}
\multiput(97.66,66.43)(0.1,-0.13){3}{\line(0,-1){0.13}}
\multiput(97.33,66.8)(0.11,-0.12){3}{\line(0,-1){0.12}}
\multiput(96.98,67.16)(0.12,-0.12){3}{\line(0,-1){0.12}}
\multiput(96.62,67.5)(0.12,-0.11){3}{\line(1,0){0.12}}
\multiput(96.23,67.82)(0.13,-0.11){3}{\line(1,0){0.13}}
\multiput(95.84,68.12)(0.13,-0.1){3}{\line(1,0){0.13}}
\multiput(95.43,68.4)(0.21,-0.14){2}{\line(1,0){0.21}}
\multiput(95,68.66)(0.21,-0.13){2}{\line(1,0){0.21}}
\multiput(94.56,68.9)(0.22,-0.12){2}{\line(1,0){0.22}}
\multiput(94.11,69.12)(0.22,-0.11){2}{\line(1,0){0.22}}
\multiput(93.65,69.31)(0.23,-0.1){2}{\line(1,0){0.23}}
\multiput(93.18,69.48)(0.47,-0.17){1}{\line(1,0){0.47}}
\multiput(92.71,69.63)(0.48,-0.15){1}{\line(1,0){0.48}}
\multiput(92.23,69.75)(0.48,-0.12){1}{\line(1,0){0.48}}
\multiput(91.74,69.85)(0.49,-0.1){1}{\line(1,0){0.49}}
\multiput(91.24,69.92)(0.49,-0.07){1}{\line(1,0){0.49}}
\multiput(90.75,69.97)(0.5,-0.05){1}{\line(1,0){0.5}}
\multiput(90.25,70)(0.5,-0.02){1}{\line(1,0){0.5}}
\put(89.75,70){\line(1,0){0.5}}
\multiput(89.25,69.97)(0.5,0.02){1}{\line(1,0){0.5}}
\multiput(88.76,69.92)(0.5,0.05){1}{\line(1,0){0.5}}
\multiput(88.26,69.85)(0.49,0.07){1}{\line(1,0){0.49}}
\multiput(87.77,69.75)(0.49,0.1){1}{\line(1,0){0.49}}
\multiput(87.29,69.63)(0.48,0.12){1}{\line(1,0){0.48}}
\multiput(86.82,69.48)(0.48,0.15){1}{\line(1,0){0.48}}
\multiput(86.35,69.31)(0.47,0.17){1}{\line(1,0){0.47}}
\multiput(85.89,69.12)(0.23,0.1){2}{\line(1,0){0.23}}
\multiput(85.44,68.9)(0.22,0.11){2}{\line(1,0){0.22}}
\multiput(85,68.66)(0.22,0.12){2}{\line(1,0){0.22}}
\multiput(84.57,68.4)(0.21,0.13){2}{\line(1,0){0.21}}
\multiput(84.16,68.12)(0.21,0.14){2}{\line(1,0){0.21}}
\multiput(83.77,67.82)(0.13,0.1){3}{\line(1,0){0.13}}
\multiput(83.38,67.5)(0.13,0.11){3}{\line(1,0){0.13}}
\multiput(83.02,67.16)(0.12,0.11){3}{\line(1,0){0.12}}
\multiput(82.67,66.8)(0.12,0.12){3}{\line(0,1){0.12}}
\multiput(82.34,66.43)(0.11,0.12){3}{\line(0,1){0.12}}
\multiput(82.03,66.04)(0.1,0.13){3}{\line(0,1){0.13}}
\multiput(81.74,65.63)(0.15,0.2){2}{\line(0,1){0.2}}
\multiput(81.47,65.21)(0.14,0.21){2}{\line(0,1){0.21}}
\multiput(81.22,64.78)(0.12,0.22){2}{\line(0,1){0.22}}
\multiput(80.99,64.34)(0.11,0.22){2}{\line(0,1){0.22}}
\multiput(80.79,63.88)(0.1,0.23){2}{\line(0,1){0.23}}
\multiput(80.6,63.42)(0.09,0.23){2}{\line(0,1){0.23}}
\multiput(80.44,62.95)(0.16,0.47){1}{\line(0,1){0.47}}
\multiput(80.31,62.47)(0.14,0.48){1}{\line(0,1){0.48}}
\multiput(80.2,61.98)(0.11,0.49){1}{\line(0,1){0.49}}
\multiput(80.11,61.49)(0.09,0.49){1}{\line(0,1){0.49}}
\multiput(80.05,61)(0.06,0.49){1}{\line(0,1){0.49}}
\multiput(80.01,60.5)(0.04,0.5){1}{\line(0,1){0.5}}
\multiput(80,60)(0.01,0.5){1}{\line(0,1){0.5}}

\linethickness{0.3mm}
\put(70,45){\line(0,1){5}}
\linethickness{0.3mm}
\put(70,25){\line(0,1){10}}
\linethickness{0.3mm}
\put(100,25){\line(0,1){35}}
\linethickness{0.3mm}
\put(130,50){\line(0,1){35}}
\linethickness{0.3mm}
\multiput(130,50)(0.01,-0.5){1}{\line(0,-1){0.5}}
\multiput(130.01,49.5)(0.04,-0.5){1}{\line(0,-1){0.5}}
\multiput(130.05,49)(0.06,-0.49){1}{\line(0,-1){0.49}}
\multiput(130.11,48.51)(0.09,-0.49){1}{\line(0,-1){0.49}}
\multiput(130.2,48.02)(0.11,-0.49){1}{\line(0,-1){0.49}}
\multiput(130.31,47.53)(0.14,-0.48){1}{\line(0,-1){0.48}}
\multiput(130.44,47.05)(0.16,-0.47){1}{\line(0,-1){0.47}}
\multiput(130.6,46.58)(0.09,-0.23){2}{\line(0,-1){0.23}}
\multiput(130.79,46.12)(0.1,-0.23){2}{\line(0,-1){0.23}}
\multiput(130.99,45.66)(0.11,-0.22){2}{\line(0,-1){0.22}}
\multiput(131.22,45.22)(0.12,-0.22){2}{\line(0,-1){0.22}}
\multiput(131.47,44.79)(0.14,-0.21){2}{\line(0,-1){0.21}}
\multiput(131.74,44.37)(0.15,-0.2){2}{\line(0,-1){0.2}}
\multiput(132.03,43.96)(0.1,-0.13){3}{\line(0,-1){0.13}}
\multiput(132.34,43.57)(0.11,-0.12){3}{\line(0,-1){0.12}}
\multiput(132.67,43.2)(0.12,-0.12){3}{\line(0,-1){0.12}}
\multiput(133.02,42.84)(0.12,-0.11){3}{\line(1,0){0.12}}
\multiput(133.38,42.5)(0.13,-0.11){3}{\line(1,0){0.13}}
\multiput(133.77,42.18)(0.13,-0.1){3}{\line(1,0){0.13}}
\multiput(134.16,41.88)(0.21,-0.14){2}{\line(1,0){0.21}}
\multiput(134.57,41.6)(0.21,-0.13){2}{\line(1,0){0.21}}
\multiput(135,41.34)(0.22,-0.12){2}{\line(1,0){0.22}}
\multiput(135.44,41.1)(0.22,-0.11){2}{\line(1,0){0.22}}
\multiput(135.89,40.88)(0.23,-0.1){2}{\line(1,0){0.23}}
\multiput(136.35,40.69)(0.47,-0.17){1}{\line(1,0){0.47}}
\multiput(136.82,40.52)(0.48,-0.15){1}{\line(1,0){0.48}}
\multiput(137.29,40.37)(0.48,-0.12){1}{\line(1,0){0.48}}
\multiput(137.77,40.25)(0.49,-0.1){1}{\line(1,0){0.49}}
\multiput(138.26,40.15)(0.49,-0.07){1}{\line(1,0){0.49}}
\multiput(138.76,40.08)(0.5,-0.05){1}{\line(1,0){0.5}}
\multiput(139.25,40.03)(0.5,-0.02){1}{\line(1,0){0.5}}
\put(139.75,40){\line(1,0){0.5}}
\multiput(140.25,40)(0.5,0.02){1}{\line(1,0){0.5}}
\multiput(140.75,40.03)(0.5,0.05){1}{\line(1,0){0.5}}
\multiput(141.24,40.08)(0.49,0.07){1}{\line(1,0){0.49}}
\multiput(141.74,40.15)(0.49,0.1){1}{\line(1,0){0.49}}
\multiput(142.23,40.25)(0.48,0.12){1}{\line(1,0){0.48}}
\multiput(142.71,40.37)(0.48,0.15){1}{\line(1,0){0.48}}
\multiput(143.18,40.52)(0.47,0.17){1}{\line(1,0){0.47}}
\multiput(143.65,40.69)(0.23,0.1){2}{\line(1,0){0.23}}
\multiput(144.11,40.88)(0.22,0.11){2}{\line(1,0){0.22}}
\multiput(144.56,41.1)(0.22,0.12){2}{\line(1,0){0.22}}
\multiput(145,41.34)(0.21,0.13){2}{\line(1,0){0.21}}
\multiput(145.43,41.6)(0.21,0.14){2}{\line(1,0){0.21}}
\multiput(145.84,41.88)(0.13,0.1){3}{\line(1,0){0.13}}
\multiput(146.23,42.18)(0.13,0.11){3}{\line(1,0){0.13}}
\multiput(146.62,42.5)(0.12,0.11){3}{\line(1,0){0.12}}
\multiput(146.98,42.84)(0.12,0.12){3}{\line(0,1){0.12}}
\multiput(147.33,43.2)(0.11,0.12){3}{\line(0,1){0.12}}
\multiput(147.66,43.57)(0.1,0.13){3}{\line(0,1){0.13}}
\multiput(147.97,43.96)(0.15,0.2){2}{\line(0,1){0.2}}
\multiput(148.26,44.37)(0.14,0.21){2}{\line(0,1){0.21}}
\multiput(148.53,44.79)(0.12,0.22){2}{\line(0,1){0.22}}
\multiput(148.78,45.22)(0.11,0.22){2}{\line(0,1){0.22}}
\multiput(149.01,45.66)(0.1,0.23){2}{\line(0,1){0.23}}
\multiput(149.21,46.12)(0.09,0.23){2}{\line(0,1){0.23}}
\multiput(149.4,46.58)(0.16,0.47){1}{\line(0,1){0.47}}
\multiput(149.56,47.05)(0.14,0.48){1}{\line(0,1){0.48}}
\multiput(149.69,47.53)(0.11,0.49){1}{\line(0,1){0.49}}
\multiput(149.8,48.02)(0.09,0.49){1}{\line(0,1){0.49}}
\multiput(149.89,48.51)(0.06,0.49){1}{\line(0,1){0.49}}
\multiput(149.95,49)(0.04,0.5){1}{\line(0,1){0.5}}
\multiput(149.99,49.5)(0.01,0.5){1}{\line(0,1){0.5}}

\linethickness{0.3mm}
\multiput(169.99,50.5)(0.01,-0.5){1}{\line(0,-1){0.5}}
\multiput(169.95,51)(0.04,-0.5){1}{\line(0,-1){0.5}}
\multiput(169.89,51.49)(0.06,-0.49){1}{\line(0,-1){0.49}}
\multiput(169.8,51.98)(0.09,-0.49){1}{\line(0,-1){0.49}}
\multiput(169.69,52.47)(0.11,-0.49){1}{\line(0,-1){0.49}}
\multiput(169.56,52.95)(0.14,-0.48){1}{\line(0,-1){0.48}}
\multiput(169.4,53.42)(0.16,-0.47){1}{\line(0,-1){0.47}}
\multiput(169.21,53.88)(0.09,-0.23){2}{\line(0,-1){0.23}}
\multiput(169.01,54.34)(0.1,-0.23){2}{\line(0,-1){0.23}}
\multiput(168.78,54.78)(0.11,-0.22){2}{\line(0,-1){0.22}}
\multiput(168.53,55.21)(0.12,-0.22){2}{\line(0,-1){0.22}}
\multiput(168.26,55.63)(0.14,-0.21){2}{\line(0,-1){0.21}}
\multiput(167.97,56.04)(0.15,-0.2){2}{\line(0,-1){0.2}}
\multiput(167.66,56.43)(0.1,-0.13){3}{\line(0,-1){0.13}}
\multiput(167.33,56.8)(0.11,-0.12){3}{\line(0,-1){0.12}}
\multiput(166.98,57.16)(0.12,-0.12){3}{\line(0,-1){0.12}}
\multiput(166.62,57.5)(0.12,-0.11){3}{\line(1,0){0.12}}
\multiput(166.23,57.82)(0.13,-0.11){3}{\line(1,0){0.13}}
\multiput(165.84,58.12)(0.13,-0.1){3}{\line(1,0){0.13}}
\multiput(165.43,58.4)(0.21,-0.14){2}{\line(1,0){0.21}}
\multiput(165,58.66)(0.21,-0.13){2}{\line(1,0){0.21}}
\multiput(164.56,58.9)(0.22,-0.12){2}{\line(1,0){0.22}}
\multiput(164.11,59.12)(0.22,-0.11){2}{\line(1,0){0.22}}
\multiput(163.65,59.31)(0.23,-0.1){2}{\line(1,0){0.23}}
\multiput(163.18,59.48)(0.47,-0.17){1}{\line(1,0){0.47}}
\multiput(162.71,59.63)(0.48,-0.15){1}{\line(1,0){0.48}}
\multiput(162.23,59.75)(0.48,-0.12){1}{\line(1,0){0.48}}
\multiput(161.74,59.85)(0.49,-0.1){1}{\line(1,0){0.49}}
\multiput(161.24,59.92)(0.49,-0.07){1}{\line(1,0){0.49}}
\multiput(160.75,59.97)(0.5,-0.05){1}{\line(1,0){0.5}}
\multiput(160.25,60)(0.5,-0.02){1}{\line(1,0){0.5}}
\put(159.75,60){\line(1,0){0.5}}
\multiput(159.25,59.97)(0.5,0.02){1}{\line(1,0){0.5}}
\multiput(158.76,59.92)(0.5,0.05){1}{\line(1,0){0.5}}
\multiput(158.26,59.85)(0.49,0.07){1}{\line(1,0){0.49}}
\multiput(157.77,59.75)(0.49,0.1){1}{\line(1,0){0.49}}
\multiput(157.29,59.63)(0.48,0.12){1}{\line(1,0){0.48}}
\multiput(156.82,59.48)(0.48,0.15){1}{\line(1,0){0.48}}
\multiput(156.35,59.31)(0.47,0.17){1}{\line(1,0){0.47}}
\multiput(155.89,59.12)(0.23,0.1){2}{\line(1,0){0.23}}
\multiput(155.44,58.9)(0.22,0.11){2}{\line(1,0){0.22}}
\multiput(155,58.66)(0.22,0.12){2}{\line(1,0){0.22}}
\multiput(154.57,58.4)(0.21,0.13){2}{\line(1,0){0.21}}
\multiput(154.16,58.12)(0.21,0.14){2}{\line(1,0){0.21}}
\multiput(153.77,57.82)(0.13,0.1){3}{\line(1,0){0.13}}
\multiput(153.38,57.5)(0.13,0.11){3}{\line(1,0){0.13}}
\multiput(153.02,57.16)(0.12,0.11){3}{\line(1,0){0.12}}
\multiput(152.67,56.8)(0.12,0.12){3}{\line(0,1){0.12}}
\multiput(152.34,56.43)(0.11,0.12){3}{\line(0,1){0.12}}
\multiput(152.03,56.04)(0.1,0.13){3}{\line(0,1){0.13}}
\multiput(151.74,55.63)(0.15,0.2){2}{\line(0,1){0.2}}
\multiput(151.47,55.21)(0.14,0.21){2}{\line(0,1){0.21}}
\multiput(151.22,54.78)(0.12,0.22){2}{\line(0,1){0.22}}
\multiput(150.99,54.34)(0.11,0.22){2}{\line(0,1){0.22}}
\multiput(150.79,53.88)(0.1,0.23){2}{\line(0,1){0.23}}
\multiput(150.6,53.42)(0.09,0.23){2}{\line(0,1){0.23}}
\multiput(150.44,52.95)(0.16,0.47){1}{\line(0,1){0.47}}
\multiput(150.31,52.47)(0.14,0.48){1}{\line(0,1){0.48}}
\multiput(150.2,51.98)(0.11,0.49){1}{\line(0,1){0.49}}
\multiput(150.11,51.49)(0.09,0.49){1}{\line(0,1){0.49}}
\multiput(150.05,51)(0.06,0.49){1}{\line(0,1){0.49}}
\multiput(150.01,50.5)(0.04,0.5){1}{\line(0,1){0.5}}
\multiput(150,50)(0.01,0.5){1}{\line(0,1){0.5}}

\linethickness{0.3mm}
\multiput(179.99,70.5)(0.01,-0.5){1}{\line(0,-1){0.5}}
\multiput(179.95,71)(0.04,-0.5){1}{\line(0,-1){0.5}}
\multiput(179.89,71.49)(0.06,-0.49){1}{\line(0,-1){0.49}}
\multiput(179.8,71.98)(0.09,-0.49){1}{\line(0,-1){0.49}}
\multiput(179.69,72.47)(0.11,-0.49){1}{\line(0,-1){0.49}}
\multiput(179.56,72.95)(0.14,-0.48){1}{\line(0,-1){0.48}}
\multiput(179.4,73.42)(0.16,-0.47){1}{\line(0,-1){0.47}}
\multiput(179.21,73.88)(0.09,-0.23){2}{\line(0,-1){0.23}}
\multiput(179.01,74.34)(0.1,-0.23){2}{\line(0,-1){0.23}}
\multiput(178.78,74.78)(0.11,-0.22){2}{\line(0,-1){0.22}}
\multiput(178.53,75.21)(0.12,-0.22){2}{\line(0,-1){0.22}}
\multiput(178.26,75.63)(0.14,-0.21){2}{\line(0,-1){0.21}}
\multiput(177.97,76.04)(0.15,-0.2){2}{\line(0,-1){0.2}}
\multiput(177.66,76.43)(0.1,-0.13){3}{\line(0,-1){0.13}}
\multiput(177.33,76.8)(0.11,-0.12){3}{\line(0,-1){0.12}}
\multiput(176.98,77.16)(0.12,-0.12){3}{\line(0,-1){0.12}}
\multiput(176.62,77.5)(0.12,-0.11){3}{\line(1,0){0.12}}
\multiput(176.23,77.82)(0.13,-0.11){3}{\line(1,0){0.13}}
\multiput(175.84,78.12)(0.13,-0.1){3}{\line(1,0){0.13}}
\multiput(175.43,78.4)(0.21,-0.14){2}{\line(1,0){0.21}}
\multiput(175,78.66)(0.21,-0.13){2}{\line(1,0){0.21}}
\multiput(174.56,78.9)(0.22,-0.12){2}{\line(1,0){0.22}}
\multiput(174.11,79.12)(0.22,-0.11){2}{\line(1,0){0.22}}
\multiput(173.65,79.31)(0.23,-0.1){2}{\line(1,0){0.23}}
\multiput(173.18,79.48)(0.47,-0.17){1}{\line(1,0){0.47}}
\multiput(172.71,79.63)(0.48,-0.15){1}{\line(1,0){0.48}}
\multiput(172.23,79.75)(0.48,-0.12){1}{\line(1,0){0.48}}
\multiput(171.74,79.85)(0.49,-0.1){1}{\line(1,0){0.49}}
\multiput(171.24,79.92)(0.49,-0.07){1}{\line(1,0){0.49}}
\multiput(170.75,79.97)(0.5,-0.05){1}{\line(1,0){0.5}}
\multiput(170.25,80)(0.5,-0.02){1}{\line(1,0){0.5}}
\put(169.75,80){\line(1,0){0.5}}
\multiput(169.25,79.97)(0.5,0.02){1}{\line(1,0){0.5}}
\multiput(168.76,79.92)(0.5,0.05){1}{\line(1,0){0.5}}
\multiput(168.26,79.85)(0.49,0.07){1}{\line(1,0){0.49}}
\multiput(167.77,79.75)(0.49,0.1){1}{\line(1,0){0.49}}
\multiput(167.29,79.63)(0.48,0.12){1}{\line(1,0){0.48}}
\multiput(166.82,79.48)(0.48,0.15){1}{\line(1,0){0.48}}
\multiput(166.35,79.31)(0.47,0.17){1}{\line(1,0){0.47}}
\multiput(165.89,79.12)(0.23,0.1){2}{\line(1,0){0.23}}
\multiput(165.44,78.9)(0.22,0.11){2}{\line(1,0){0.22}}
\multiput(165,78.66)(0.22,0.12){2}{\line(1,0){0.22}}
\multiput(164.57,78.4)(0.21,0.13){2}{\line(1,0){0.21}}
\multiput(164.16,78.12)(0.21,0.14){2}{\line(1,0){0.21}}
\multiput(163.77,77.82)(0.13,0.1){3}{\line(1,0){0.13}}
\multiput(163.38,77.5)(0.13,0.11){3}{\line(1,0){0.13}}
\multiput(163.02,77.16)(0.12,0.11){3}{\line(1,0){0.12}}
\multiput(162.67,76.8)(0.12,0.12){3}{\line(0,1){0.12}}
\multiput(162.34,76.43)(0.11,0.12){3}{\line(0,1){0.12}}
\multiput(162.03,76.04)(0.1,0.13){3}{\line(0,1){0.13}}
\multiput(161.74,75.63)(0.15,0.2){2}{\line(0,1){0.2}}
\multiput(161.47,75.21)(0.14,0.21){2}{\line(0,1){0.21}}
\multiput(161.22,74.78)(0.12,0.22){2}{\line(0,1){0.22}}
\multiput(160.99,74.34)(0.11,0.22){2}{\line(0,1){0.22}}
\multiput(160.79,73.88)(0.1,0.23){2}{\line(0,1){0.23}}
\multiput(160.6,73.42)(0.09,0.23){2}{\line(0,1){0.23}}
\multiput(160.44,72.95)(0.16,0.47){1}{\line(0,1){0.47}}
\multiput(160.31,72.47)(0.14,0.48){1}{\line(0,1){0.48}}
\multiput(160.2,71.98)(0.11,0.49){1}{\line(0,1){0.49}}
\multiput(160.11,71.49)(0.09,0.49){1}{\line(0,1){0.49}}
\multiput(160.05,71)(0.06,0.49){1}{\line(0,1){0.49}}
\multiput(160.01,70.5)(0.04,0.5){1}{\line(0,1){0.5}}
\multiput(160,70)(0.01,0.5){1}{\line(0,1){0.5}}

\linethickness{0.3mm}
\put(160,60){\line(0,1){10}}
\linethickness{0.3mm}
\put(170,25){\line(0,1){25}}
\linethickness{0.3mm}
\put(180,25){\line(0,1){45}}
\linethickness{0.3mm}
\put(30,25){\line(0,1){25}}
\linethickness{0.3mm}
\put(10,25){\line(0,1){25}}
\put(94,77){\makebox(0,0)[cc]{$\<,\>^{-1}$}}

\put(174,86){\makebox(0,0)[cc]{$\<,\>^{-1}$}}

\put(70,40){\makebox(0,0)[cc]{$\partial$}}

\put(14,64){\makebox(0,0)[cc]{$\partial_E$}}

\put(25,82){\makebox(0,0)[cc]{$E$}}

\put(65,85){\makebox(0,0)[cc]{}}

\put(43,55){\makebox(0,0)[cc]{$=$}}

\put(115,55){\makebox(0,0)[cc]{$-$}}

\put(105,64){\makebox(0,0)[cc]{$E$}}

\put(75,65){\makebox(0,0)[cc]{$\overline{E}$}}

\put(154,64){\makebox(0,0)[cc]{$\tilde\partial$}}

\put(140,35){\makebox(0,0)[cc]{$\<,\>$}}

\put(77,47){\makebox(0,0)[cc]{$\<,\>$}}

\put(35,29.5){\makebox(0,0)[cc]{$E$}}

\put(1,30){\makebox(0,0)[cc]{$\Omega^{1,0}$}}

\end{picture}

\end{proposition}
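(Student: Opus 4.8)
The plan is to pin down $\nabla_E$ by decomposing it into bidegrees, $\nabla_E=\pd_E+\pdol_E$ with $\pd_E=(\pi^{1,0}\tens\id)\nabla_E$ the unknown part and $\pdol_E$ the prescribed holomorphic structure, and then to solve the metric-compatibility equation (\ref{hermcon1}) for $\pd_E$. First I would split (\ref{hermcon1}) into its $\Omega^{1,0}$- and $\Omega^{0,1}$-valued components. Since $\tilde\nabla(\overline{e})=\overline{f}\tens\kappa^*$ and $*$ swaps the bidegree of $1$-forms, the $\Omega^{1,0}$-part $\tilde\pd$ of $\tilde\nabla$ is built from $\pdol_E$, while the $\Omega^{0,1}$-part $\tilde\pdol$ is built from the unknown $\pd_E$. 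Hence the $\Omega^{1,0}$-component of (\ref{hermcon1}) reads
\[
\pd\<e,\overline{g}\> = (\id\tens\<,\>)(\pd_E(e)\tens\overline{g})+(\<,\>\tens\id)(e\tens\tilde\pd(\overline{g}))\ ,
\]
in which only $\pd_E$ is unknown, the term $\tilde\pd(\overline g)$ being determined by $\pdol_E$.

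Next I would solve this for $\pd_E$ using non-degeneracy. Writing $\Phi(e,\overline{g})=\pd\<e,\overline{g}\>-(\<,\>\tens\id)(e\tens\tilde\pd(\overline{g}))\in\Omega^{1,0}$ for the known right-hand side, and $\<,\>^{-1}(1)=\overline{h^a}\tens e_a$ for the inverse inner product (implicit sum over $a$), the reconstruction identity $\<e,\overline{h^a}\>\,e_a=e$ coming from the coevaluation lets me invert the pairing and set $\pd_E(e)=\Phi(e,\overline{h^a})\tens e_a$. This is precisely the map drawn in the diagram, the two terms being $\pd$ and $\tilde\pd$ capped off by $\<,\>^{-1}$ at the top and $\<,\>$ at the bottom. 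The same formula yields both existence (it defines a candidate $\pd_E$) and uniqueness (any metric-preserving connection with $(0,1)$-part $\pdol_E$ must satisfy the displayed $\Omega^{1,0}$-equation, and the reconstruction identity then forces its $\pd_E$ to coincide with this one). I would then verify the left Leibniz rule: since $\<,\>$ is a bimodule map and $\pd$ obeys Leibniz, one gets $\Phi(b.e,\overline{g})=\pd b\,\<e,\overline{g}\>+b\,\Phi(e,\overline{g})$, and the reconstruction identity collapses the first term to $\pd b\tens e$, giving $\pd_E(b.e)=\pd b\tens e+b\,\pd_E(e)$.

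The one genuinely non-routine point is showing that the remaining $\Omega^{0,1}$-component of (\ref{hermcon1}) holds automatically, so that $\nabla_E=\pd_E+\pdol_E$ preserves the whole metric and not merely its $(1,0)$-half. Here I would use the hermitian symmetry $\<e,\overline{g}\>^*=\<g,\overline{e}\>$ together with $(\pd\omega)^*=\pdol(\omega^*)$ and the defining formula for $\tilde\nabla$. Applying $*$ to the $\Omega^{1,0}$-component above for the pair $(e,\overline{g})$ turns it termwise into the $\Omega^{0,1}$-component for the pair $(g,\overline{e})$: the star of $(\id\tens\<,\>)(\pd_E(e)\tens\overline{g})$ becomes $(\<,\>\tens\id)(g\tens\tilde\pdol(\overline{e}))$, and the star of $(\<,\>\tens\id)(e\tens\tilde\pd(\overline{g}))$ becomes $(\id\tens\<,\>)(\pdol_E(g)\tens\overline{e})$. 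As $e,g$ are arbitrary, the $\Omega^{1,0}$-equation holding for all pairs is equivalent to the $\Omega^{0,1}$-equation holding for all pairs, so imposing the former secures the latter.

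Finally, the condition $(\pi^{0,1}\tens\id)\nabla_E=\pdol_E$ holds by construction, which completes the argument. I would also remark that the alternative route via Christoffel symbols (expanding $\nabla_E$ against a dual basis and solving the same $\Omega^{1,0}$-equation coefficientwise) produces the identical connection, which is useful for computations in the examples.
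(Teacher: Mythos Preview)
Your proposal is correct and follows essentially the same route as the paper: define $\pd_E$ by the diagram formula using $\<,\>^{-1}$, verify the $\Omega^{1,0}$-half of metric compatibility directly, and then obtain the $\Omega^{0,1}$-half by applying $*$ together with the hermitian symmetry $\<e,\overline{g}\>^*=\<g,\overline{e}\>$. You are in fact a little more explicit than the paper about uniqueness (via the reconstruction identity) and about the Leibniz rule, both of which the paper treats rather briefly here.
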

\proof
Note that the RHS of the diagram only depends on the value $\<,\>^{-1}\in \overline{E}\tens_A E$ (with the emphasis on
$\tens_A$). It is reasonably easy to see that this defines a left $\pd$-connection. 
Then applying $\id\tens\<-,\overline{c}\>$ to this shows that
\[
\pd\<e,\overline{c}\>=(\id\tens\<,\>)(\pd_E e\tens \overline{c})+(\<,\>\tens\id)(e\tens\tilde\pd\,\overline{c})\ .
\]
We now need to check applying the 
$\pdol$ derivative, which means checking the equation 

\unitlength 0.5 mm
\begin{picture}(155,55)(-10,23)
\linethickness{0.3mm}
\put(10,60){\line(0,1){15}}
\linethickness{0.3mm}
\put(30,60){\line(0,1){15}}
\linethickness{0.3mm}
\multiput(10,60)(0.01,-0.5){1}{\line(0,-1){0.5}}
\multiput(10.01,59.5)(0.04,-0.5){1}{\line(0,-1){0.5}}
\multiput(10.05,59)(0.06,-0.49){1}{\line(0,-1){0.49}}
\multiput(10.11,58.51)(0.09,-0.49){1}{\line(0,-1){0.49}}
\multiput(10.2,58.02)(0.11,-0.49){1}{\line(0,-1){0.49}}
\multiput(10.31,57.53)(0.14,-0.48){1}{\line(0,-1){0.48}}
\multiput(10.44,57.05)(0.16,-0.47){1}{\line(0,-1){0.47}}
\multiput(10.6,56.58)(0.09,-0.23){2}{\line(0,-1){0.23}}
\multiput(10.79,56.12)(0.1,-0.23){2}{\line(0,-1){0.23}}
\multiput(10.99,55.66)(0.11,-0.22){2}{\line(0,-1){0.22}}
\multiput(11.22,55.22)(0.12,-0.22){2}{\line(0,-1){0.22}}
\multiput(11.47,54.79)(0.14,-0.21){2}{\line(0,-1){0.21}}
\multiput(11.74,54.37)(0.15,-0.2){2}{\line(0,-1){0.2}}
\multiput(12.03,53.96)(0.1,-0.13){3}{\line(0,-1){0.13}}
\multiput(12.34,53.57)(0.11,-0.12){3}{\line(0,-1){0.12}}
\multiput(12.67,53.2)(0.12,-0.12){3}{\line(0,-1){0.12}}
\multiput(13.02,52.84)(0.12,-0.11){3}{\line(1,0){0.12}}
\multiput(13.38,52.5)(0.13,-0.11){3}{\line(1,0){0.13}}
\multiput(13.77,52.18)(0.13,-0.1){3}{\line(1,0){0.13}}
\multiput(14.16,51.88)(0.21,-0.14){2}{\line(1,0){0.21}}
\multiput(14.57,51.6)(0.21,-0.13){2}{\line(1,0){0.21}}
\multiput(15,51.34)(0.22,-0.12){2}{\line(1,0){0.22}}
\multiput(15.44,51.1)(0.22,-0.11){2}{\line(1,0){0.22}}
\multiput(15.89,50.88)(0.23,-0.1){2}{\line(1,0){0.23}}
\multiput(16.35,50.69)(0.47,-0.17){1}{\line(1,0){0.47}}
\multiput(16.82,50.52)(0.48,-0.15){1}{\line(1,0){0.48}}
\multiput(17.29,50.37)(0.48,-0.12){1}{\line(1,0){0.48}}
\multiput(17.77,50.25)(0.49,-0.1){1}{\line(1,0){0.49}}
\multiput(18.26,50.15)(0.49,-0.07){1}{\line(1,0){0.49}}
\multiput(18.76,50.08)(0.5,-0.05){1}{\line(1,0){0.5}}
\multiput(19.25,50.03)(0.5,-0.02){1}{\line(1,0){0.5}}
\put(19.75,50){\line(1,0){0.5}}
\multiput(20.25,50)(0.5,0.02){1}{\line(1,0){0.5}}
\multiput(20.75,50.03)(0.5,0.05){1}{\line(1,0){0.5}}
\multiput(21.24,50.08)(0.49,0.07){1}{\line(1,0){0.49}}
\multiput(21.74,50.15)(0.49,0.1){1}{\line(1,0){0.49}}
\multiput(22.23,50.25)(0.48,0.12){1}{\line(1,0){0.48}}
\multiput(22.71,50.37)(0.48,0.15){1}{\line(1,0){0.48}}
\multiput(23.18,50.52)(0.47,0.17){1}{\line(1,0){0.47}}
\multiput(23.65,50.69)(0.23,0.1){2}{\line(1,0){0.23}}
\multiput(24.11,50.88)(0.22,0.11){2}{\line(1,0){0.22}}
\multiput(24.56,51.1)(0.22,0.12){2}{\line(1,0){0.22}}
\multiput(25,51.34)(0.21,0.13){2}{\line(1,0){0.21}}
\multiput(25.43,51.6)(0.21,0.14){2}{\line(1,0){0.21}}
\multiput(25.84,51.88)(0.13,0.1){3}{\line(1,0){0.13}}
\multiput(26.23,52.18)(0.13,0.11){3}{\line(1,0){0.13}}
\multiput(26.62,52.5)(0.12,0.11){3}{\line(1,0){0.12}}
\multiput(26.98,52.84)(0.12,0.12){3}{\line(0,1){0.12}}
\multiput(27.33,53.2)(0.11,0.12){3}{\line(0,1){0.12}}
\multiput(27.66,53.57)(0.1,0.13){3}{\line(0,1){0.13}}
\multiput(27.97,53.96)(0.15,0.2){2}{\line(0,1){0.2}}
\multiput(28.26,54.37)(0.14,0.21){2}{\line(0,1){0.21}}
\multiput(28.53,54.79)(0.12,0.22){2}{\line(0,1){0.22}}
\multiput(28.78,55.22)(0.11,0.22){2}{\line(0,1){0.22}}
\multiput(29.01,55.66)(0.1,0.23){2}{\line(0,1){0.23}}
\multiput(29.21,56.12)(0.09,0.23){2}{\line(0,1){0.23}}
\multiput(29.4,56.58)(0.16,0.47){1}{\line(0,1){0.47}}
\multiput(29.56,57.05)(0.14,0.48){1}{\line(0,1){0.48}}
\multiput(29.69,57.53)(0.11,0.49){1}{\line(0,1){0.49}}
\multiput(29.8,58.02)(0.09,0.49){1}{\line(0,1){0.49}}
\multiput(29.89,58.51)(0.06,0.49){1}{\line(0,1){0.49}}
\multiput(29.95,59)(0.04,0.5){1}{\line(0,1){0.5}}
\multiput(29.99,59.5)(0.01,0.5){1}{\line(0,1){0.5}}

\linethickness{0.3mm}
\put(20,45){\line(0,1){5}}
\linethickness{0.3mm}
\put(20,25){\line(0,1){10}}
\linethickness{0.3mm}
\put(65,60){\line(0,1){15}}
\linethickness{0.3mm}
\multiput(74.99,50.5)(0.01,-0.5){1}{\line(0,-1){0.5}}
\multiput(74.95,51)(0.04,-0.5){1}{\line(0,-1){0.5}}
\multiput(74.89,51.49)(0.06,-0.49){1}{\line(0,-1){0.49}}
\multiput(74.8,51.98)(0.09,-0.49){1}{\line(0,-1){0.49}}
\multiput(74.69,52.47)(0.11,-0.49){1}{\line(0,-1){0.49}}
\multiput(74.56,52.95)(0.14,-0.48){1}{\line(0,-1){0.48}}
\multiput(74.4,53.42)(0.16,-0.47){1}{\line(0,-1){0.47}}
\multiput(74.21,53.88)(0.09,-0.23){2}{\line(0,-1){0.23}}
\multiput(74.01,54.34)(0.1,-0.23){2}{\line(0,-1){0.23}}
\multiput(73.78,54.78)(0.11,-0.22){2}{\line(0,-1){0.22}}
\multiput(73.53,55.21)(0.12,-0.22){2}{\line(0,-1){0.22}}
\multiput(73.26,55.63)(0.14,-0.21){2}{\line(0,-1){0.21}}
\multiput(72.97,56.04)(0.15,-0.2){2}{\line(0,-1){0.2}}
\multiput(72.66,56.43)(0.1,-0.13){3}{\line(0,-1){0.13}}
\multiput(72.33,56.8)(0.11,-0.12){3}{\line(0,-1){0.12}}
\multiput(71.98,57.16)(0.12,-0.12){3}{\line(0,-1){0.12}}
\multiput(71.62,57.5)(0.12,-0.11){3}{\line(1,0){0.12}}
\multiput(71.23,57.82)(0.13,-0.11){3}{\line(1,0){0.13}}
\multiput(70.84,58.12)(0.13,-0.1){3}{\line(1,0){0.13}}
\multiput(70.43,58.4)(0.21,-0.14){2}{\line(1,0){0.21}}
\multiput(70,58.66)(0.21,-0.13){2}{\line(1,0){0.21}}
\multiput(69.56,58.9)(0.22,-0.12){2}{\line(1,0){0.22}}
\multiput(69.11,59.12)(0.22,-0.11){2}{\line(1,0){0.22}}
\multiput(68.65,59.31)(0.23,-0.1){2}{\line(1,0){0.23}}
\multiput(68.18,59.48)(0.47,-0.17){1}{\line(1,0){0.47}}
\multiput(67.71,59.63)(0.48,-0.15){1}{\line(1,0){0.48}}
\multiput(67.23,59.75)(0.48,-0.12){1}{\line(1,0){0.48}}
\multiput(66.74,59.85)(0.49,-0.1){1}{\line(1,0){0.49}}
\multiput(66.24,59.92)(0.49,-0.07){1}{\line(1,0){0.49}}
\multiput(65.75,59.97)(0.5,-0.05){1}{\line(1,0){0.5}}
\multiput(65.25,60)(0.5,-0.02){1}{\line(1,0){0.5}}
\put(64.75,60){\line(1,0){0.5}}
\multiput(64.25,59.97)(0.5,0.02){1}{\line(1,0){0.5}}
\multiput(63.76,59.92)(0.5,0.05){1}{\line(1,0){0.5}}
\multiput(63.26,59.85)(0.49,0.07){1}{\line(1,0){0.49}}
\multiput(62.77,59.75)(0.49,0.1){1}{\line(1,0){0.49}}
\multiput(62.29,59.63)(0.48,0.12){1}{\line(1,0){0.48}}
\multiput(61.82,59.48)(0.48,0.15){1}{\line(1,0){0.48}}
\multiput(61.35,59.31)(0.47,0.17){1}{\line(1,0){0.47}}
\multiput(60.89,59.12)(0.23,0.1){2}{\line(1,0){0.23}}
\multiput(60.44,58.9)(0.22,0.11){2}{\line(1,0){0.22}}
\multiput(60,58.66)(0.22,0.12){2}{\line(1,0){0.22}}
\multiput(59.57,58.4)(0.21,0.13){2}{\line(1,0){0.21}}
\multiput(59.16,58.12)(0.21,0.14){2}{\line(1,0){0.21}}
\multiput(58.77,57.82)(0.13,0.1){3}{\line(1,0){0.13}}
\multiput(58.38,57.5)(0.13,0.11){3}{\line(1,0){0.13}}
\multiput(58.02,57.16)(0.12,0.11){3}{\line(1,0){0.12}}
\multiput(57.67,56.8)(0.12,0.12){3}{\line(0,1){0.12}}
\multiput(57.34,56.43)(0.11,0.12){3}{\line(0,1){0.12}}
\multiput(57.03,56.04)(0.1,0.13){3}{\line(0,1){0.13}}
\multiput(56.74,55.63)(0.15,0.2){2}{\line(0,1){0.2}}
\multiput(56.47,55.21)(0.14,0.21){2}{\line(0,1){0.21}}
\multiput(56.22,54.78)(0.12,0.22){2}{\line(0,1){0.22}}
\multiput(55.99,54.34)(0.11,0.22){2}{\line(0,1){0.22}}
\multiput(55.79,53.88)(0.1,0.23){2}{\line(0,1){0.23}}
\multiput(55.6,53.42)(0.09,0.23){2}{\line(0,1){0.23}}
\multiput(55.44,52.95)(0.16,0.47){1}{\line(0,1){0.47}}
\multiput(55.31,52.47)(0.14,0.48){1}{\line(0,1){0.48}}
\multiput(55.2,51.98)(0.11,0.49){1}{\line(0,1){0.49}}
\multiput(55.11,51.49)(0.09,0.49){1}{\line(0,1){0.49}}
\multiput(55.05,51)(0.06,0.49){1}{\line(0,1){0.49}}
\multiput(55.01,50.5)(0.04,0.5){1}{\line(0,1){0.5}}
\multiput(55,50)(0.01,0.5){1}{\line(0,1){0.5}}

\linethickness{0.3mm}
\put(95,50){\line(0,1){25}}
\linethickness{0.3mm}
\multiput(75,50)(0.01,-0.5){1}{\line(0,-1){0.5}}
\multiput(75.01,49.5)(0.04,-0.5){1}{\line(0,-1){0.5}}
\multiput(75.05,49)(0.06,-0.49){1}{\line(0,-1){0.49}}
\multiput(75.11,48.51)(0.09,-0.49){1}{\line(0,-1){0.49}}
\multiput(75.2,48.02)(0.11,-0.49){1}{\line(0,-1){0.49}}
\multiput(75.31,47.53)(0.14,-0.48){1}{\line(0,-1){0.48}}
\multiput(75.44,47.05)(0.16,-0.47){1}{\line(0,-1){0.47}}
\multiput(75.6,46.58)(0.09,-0.23){2}{\line(0,-1){0.23}}
\multiput(75.79,46.12)(0.1,-0.23){2}{\line(0,-1){0.23}}
\multiput(75.99,45.66)(0.11,-0.22){2}{\line(0,-1){0.22}}
\multiput(76.22,45.22)(0.12,-0.22){2}{\line(0,-1){0.22}}
\multiput(76.47,44.79)(0.14,-0.21){2}{\line(0,-1){0.21}}
\multiput(76.74,44.37)(0.15,-0.2){2}{\line(0,-1){0.2}}
\multiput(77.03,43.96)(0.1,-0.13){3}{\line(0,-1){0.13}}
\multiput(77.34,43.57)(0.11,-0.12){3}{\line(0,-1){0.12}}
\multiput(77.67,43.2)(0.12,-0.12){3}{\line(0,-1){0.12}}
\multiput(78.02,42.84)(0.12,-0.11){3}{\line(1,0){0.12}}
\multiput(78.38,42.5)(0.13,-0.11){3}{\line(1,0){0.13}}
\multiput(78.77,42.18)(0.13,-0.1){3}{\line(1,0){0.13}}
\multiput(79.16,41.88)(0.21,-0.14){2}{\line(1,0){0.21}}
\multiput(79.57,41.6)(0.21,-0.13){2}{\line(1,0){0.21}}
\multiput(80,41.34)(0.22,-0.12){2}{\line(1,0){0.22}}
\multiput(80.44,41.1)(0.22,-0.11){2}{\line(1,0){0.22}}
\multiput(80.89,40.88)(0.23,-0.1){2}{\line(1,0){0.23}}
\multiput(81.35,40.69)(0.47,-0.17){1}{\line(1,0){0.47}}
\multiput(81.82,40.52)(0.48,-0.15){1}{\line(1,0){0.48}}
\multiput(82.29,40.37)(0.48,-0.12){1}{\line(1,0){0.48}}
\multiput(82.77,40.25)(0.49,-0.1){1}{\line(1,0){0.49}}
\multiput(83.26,40.15)(0.49,-0.07){1}{\line(1,0){0.49}}
\multiput(83.76,40.08)(0.5,-0.05){1}{\line(1,0){0.5}}
\multiput(84.25,40.03)(0.5,-0.02){1}{\line(1,0){0.5}}
\put(84.75,40){\line(1,0){0.5}}
\multiput(85.25,40)(0.5,0.02){1}{\line(1,0){0.5}}
\multiput(85.75,40.03)(0.5,0.05){1}{\line(1,0){0.5}}
\multiput(86.24,40.08)(0.49,0.07){1}{\line(1,0){0.49}}
\multiput(86.74,40.15)(0.49,0.1){1}{\line(1,0){0.49}}
\multiput(87.23,40.25)(0.48,0.12){1}{\line(1,0){0.48}}
\multiput(87.71,40.37)(0.48,0.15){1}{\line(1,0){0.48}}
\multiput(88.18,40.52)(0.47,0.17){1}{\line(1,0){0.47}}
\multiput(88.65,40.69)(0.23,0.1){2}{\line(1,0){0.23}}
\multiput(89.11,40.88)(0.22,0.11){2}{\line(1,0){0.22}}
\multiput(89.56,41.1)(0.22,0.12){2}{\line(1,0){0.22}}
\multiput(90,41.34)(0.21,0.13){2}{\line(1,0){0.21}}
\multiput(90.43,41.6)(0.21,0.14){2}{\line(1,0){0.21}}
\multiput(90.84,41.88)(0.13,0.1){3}{\line(1,0){0.13}}
\multiput(91.23,42.18)(0.13,0.11){3}{\line(1,0){0.13}}
\multiput(91.62,42.5)(0.12,0.11){3}{\line(1,0){0.12}}
\multiput(91.98,42.84)(0.12,0.12){3}{\line(0,1){0.12}}
\multiput(92.33,43.2)(0.11,0.12){3}{\line(0,1){0.12}}
\multiput(92.66,43.57)(0.1,0.13){3}{\line(0,1){0.13}}
\multiput(92.97,43.96)(0.15,0.2){2}{\line(0,1){0.2}}
\multiput(93.26,44.37)(0.14,0.21){2}{\line(0,1){0.21}}
\multiput(93.53,44.79)(0.12,0.22){2}{\line(0,1){0.22}}
\multiput(93.78,45.22)(0.11,0.22){2}{\line(0,1){0.22}}
\multiput(94.01,45.66)(0.1,0.23){2}{\line(0,1){0.23}}
\multiput(94.21,46.12)(0.09,0.23){2}{\line(0,1){0.23}}
\multiput(94.4,46.58)(0.16,0.47){1}{\line(0,1){0.47}}
\multiput(94.56,47.05)(0.14,0.48){1}{\line(0,1){0.48}}
\multiput(94.69,47.53)(0.11,0.49){1}{\line(0,1){0.49}}
\multiput(94.8,48.02)(0.09,0.49){1}{\line(0,1){0.49}}
\multiput(94.89,48.51)(0.06,0.49){1}{\line(0,1){0.49}}
\multiput(94.95,49)(0.04,0.5){1}{\line(0,1){0.5}}
\multiput(94.99,49.5)(0.01,0.5){1}{\line(0,1){0.5}}

\linethickness{0.3mm}
\put(55,25){\line(0,1){25}}
\linethickness{0.3mm}
\put(115,50){\line(0,1){25}}
\linethickness{0.3mm}
\multiput(115,50)(0.01,-0.5){1}{\line(0,-1){0.5}}
\multiput(115.01,49.5)(0.04,-0.5){1}{\line(0,-1){0.5}}
\multiput(115.05,49)(0.06,-0.49){1}{\line(0,-1){0.49}}
\multiput(115.11,48.51)(0.09,-0.49){1}{\line(0,-1){0.49}}
\multiput(115.2,48.02)(0.11,-0.49){1}{\line(0,-1){0.49}}
\multiput(115.31,47.53)(0.14,-0.48){1}{\line(0,-1){0.48}}
\multiput(115.44,47.05)(0.16,-0.47){1}{\line(0,-1){0.47}}
\multiput(115.6,46.58)(0.09,-0.23){2}{\line(0,-1){0.23}}
\multiput(115.79,46.12)(0.1,-0.23){2}{\line(0,-1){0.23}}
\multiput(115.99,45.66)(0.11,-0.22){2}{\line(0,-1){0.22}}
\multiput(116.22,45.22)(0.12,-0.22){2}{\line(0,-1){0.22}}
\multiput(116.47,44.79)(0.14,-0.21){2}{\line(0,-1){0.21}}
\multiput(116.74,44.37)(0.15,-0.2){2}{\line(0,-1){0.2}}
\multiput(117.03,43.96)(0.1,-0.13){3}{\line(0,-1){0.13}}
\multiput(117.34,43.57)(0.11,-0.12){3}{\line(0,-1){0.12}}
\multiput(117.67,43.2)(0.12,-0.12){3}{\line(0,-1){0.12}}
\multiput(118.02,42.84)(0.12,-0.11){3}{\line(1,0){0.12}}
\multiput(118.38,42.5)(0.13,-0.11){3}{\line(1,0){0.13}}
\multiput(118.77,42.18)(0.13,-0.1){3}{\line(1,0){0.13}}
\multiput(119.16,41.88)(0.21,-0.14){2}{\line(1,0){0.21}}
\multiput(119.57,41.6)(0.21,-0.13){2}{\line(1,0){0.21}}
\multiput(120,41.34)(0.22,-0.12){2}{\line(1,0){0.22}}
\multiput(120.44,41.1)(0.22,-0.11){2}{\line(1,0){0.22}}
\multiput(120.89,40.88)(0.23,-0.1){2}{\line(1,0){0.23}}
\multiput(121.35,40.69)(0.47,-0.17){1}{\line(1,0){0.47}}
\multiput(121.82,40.52)(0.48,-0.15){1}{\line(1,0){0.48}}
\multiput(122.29,40.37)(0.48,-0.12){1}{\line(1,0){0.48}}
\multiput(122.77,40.25)(0.49,-0.1){1}{\line(1,0){0.49}}
\multiput(123.26,40.15)(0.49,-0.07){1}{\line(1,0){0.49}}
\multiput(123.76,40.08)(0.5,-0.05){1}{\line(1,0){0.5}}
\multiput(124.25,40.03)(0.5,-0.02){1}{\line(1,0){0.5}}
\put(124.75,40){\line(1,0){0.5}}
\multiput(125.25,40)(0.5,0.02){1}{\line(1,0){0.5}}
\multiput(125.75,40.03)(0.5,0.05){1}{\line(1,0){0.5}}
\multiput(126.24,40.08)(0.49,0.07){1}{\line(1,0){0.49}}
\multiput(126.74,40.15)(0.49,0.1){1}{\line(1,0){0.49}}
\multiput(127.23,40.25)(0.48,0.12){1}{\line(1,0){0.48}}
\multiput(127.71,40.37)(0.48,0.15){1}{\line(1,0){0.48}}
\multiput(128.18,40.52)(0.47,0.17){1}{\line(1,0){0.47}}
\multiput(128.65,40.69)(0.23,0.1){2}{\line(1,0){0.23}}
\multiput(129.11,40.88)(0.22,0.11){2}{\line(1,0){0.22}}
\multiput(129.56,41.1)(0.22,0.12){2}{\line(1,0){0.22}}
\multiput(130,41.34)(0.21,0.13){2}{\line(1,0){0.21}}
\multiput(130.43,41.6)(0.21,0.14){2}{\line(1,0){0.21}}
\multiput(130.84,41.88)(0.13,0.1){3}{\line(1,0){0.13}}
\multiput(131.23,42.18)(0.13,0.11){3}{\line(1,0){0.13}}
\multiput(131.62,42.5)(0.12,0.11){3}{\line(1,0){0.12}}
\multiput(131.98,42.84)(0.12,0.12){3}{\line(0,1){0.12}}
\multiput(132.33,43.2)(0.11,0.12){3}{\line(0,1){0.12}}
\multiput(132.66,43.57)(0.1,0.13){3}{\line(0,1){0.13}}
\multiput(132.97,43.96)(0.15,0.2){2}{\line(0,1){0.2}}
\multiput(133.26,44.37)(0.14,0.21){2}{\line(0,1){0.21}}
\multiput(133.53,44.79)(0.12,0.22){2}{\line(0,1){0.22}}
\multiput(133.78,45.22)(0.11,0.22){2}{\line(0,1){0.22}}
\multiput(134.01,45.66)(0.1,0.23){2}{\line(0,1){0.23}}
\multiput(134.21,46.12)(0.09,0.23){2}{\line(0,1){0.23}}
\multiput(134.4,46.58)(0.16,0.47){1}{\line(0,1){0.47}}
\multiput(134.56,47.05)(0.14,0.48){1}{\line(0,1){0.48}}
\multiput(134.69,47.53)(0.11,0.49){1}{\line(0,1){0.49}}
\multiput(134.8,48.02)(0.09,0.49){1}{\line(0,1){0.49}}
\multiput(134.89,48.51)(0.06,0.49){1}{\line(0,1){0.49}}
\multiput(134.95,49)(0.04,0.5){1}{\line(0,1){0.5}}
\multiput(134.99,49.5)(0.01,0.5){1}{\line(0,1){0.5}}

\linethickness{0.3mm}
\multiput(154.99,50.5)(0.01,-0.5){1}{\line(0,-1){0.5}}
\multiput(154.95,51)(0.04,-0.5){1}{\line(0,-1){0.5}}
\multiput(154.89,51.49)(0.06,-0.49){1}{\line(0,-1){0.49}}
\multiput(154.8,51.98)(0.09,-0.49){1}{\line(0,-1){0.49}}
\multiput(154.69,52.47)(0.11,-0.49){1}{\line(0,-1){0.49}}
\multiput(154.56,52.95)(0.14,-0.48){1}{\line(0,-1){0.48}}
\multiput(154.4,53.42)(0.16,-0.47){1}{\line(0,-1){0.47}}
\multiput(154.21,53.88)(0.09,-0.23){2}{\line(0,-1){0.23}}
\multiput(154.01,54.34)(0.1,-0.23){2}{\line(0,-1){0.23}}
\multiput(153.78,54.78)(0.11,-0.22){2}{\line(0,-1){0.22}}
\multiput(153.53,55.21)(0.12,-0.22){2}{\line(0,-1){0.22}}
\multiput(153.26,55.63)(0.14,-0.21){2}{\line(0,-1){0.21}}
\multiput(152.97,56.04)(0.15,-0.2){2}{\line(0,-1){0.2}}
\multiput(152.66,56.43)(0.1,-0.13){3}{\line(0,-1){0.13}}
\multiput(152.33,56.8)(0.11,-0.12){3}{\line(0,-1){0.12}}
\multiput(151.98,57.16)(0.12,-0.12){3}{\line(0,-1){0.12}}
\multiput(151.62,57.5)(0.12,-0.11){3}{\line(1,0){0.12}}
\multiput(151.23,57.82)(0.13,-0.11){3}{\line(1,0){0.13}}
\multiput(150.84,58.12)(0.13,-0.1){3}{\line(1,0){0.13}}
\multiput(150.43,58.4)(0.21,-0.14){2}{\line(1,0){0.21}}
\multiput(150,58.66)(0.21,-0.13){2}{\line(1,0){0.21}}
\multiput(149.56,58.9)(0.22,-0.12){2}{\line(1,0){0.22}}
\multiput(149.11,59.12)(0.22,-0.11){2}{\line(1,0){0.22}}
\multiput(148.65,59.31)(0.23,-0.1){2}{\line(1,0){0.23}}
\multiput(148.18,59.48)(0.47,-0.17){1}{\line(1,0){0.47}}
\multiput(147.71,59.63)(0.48,-0.15){1}{\line(1,0){0.48}}
\multiput(147.23,59.75)(0.48,-0.12){1}{\line(1,0){0.48}}
\multiput(146.74,59.85)(0.49,-0.1){1}{\line(1,0){0.49}}
\multiput(146.24,59.92)(0.49,-0.07){1}{\line(1,0){0.49}}
\multiput(145.75,59.97)(0.5,-0.05){1}{\line(1,0){0.5}}
\multiput(145.25,60)(0.5,-0.02){1}{\line(1,0){0.5}}
\put(144.75,60){\line(1,0){0.5}}
\multiput(144.25,59.97)(0.5,0.02){1}{\line(1,0){0.5}}
\multiput(143.76,59.92)(0.5,0.05){1}{\line(1,0){0.5}}
\multiput(143.26,59.85)(0.49,0.07){1}{\line(1,0){0.49}}
\multiput(142.77,59.75)(0.49,0.1){1}{\line(1,0){0.49}}
\multiput(142.29,59.63)(0.48,0.12){1}{\line(1,0){0.48}}
\multiput(141.82,59.48)(0.48,0.15){1}{\line(1,0){0.48}}
\multiput(141.35,59.31)(0.47,0.17){1}{\line(1,0){0.47}}
\multiput(140.89,59.12)(0.23,0.1){2}{\line(1,0){0.23}}
\multiput(140.44,58.9)(0.22,0.11){2}{\line(1,0){0.22}}
\multiput(140,58.66)(0.22,0.12){2}{\line(1,0){0.22}}
\multiput(139.57,58.4)(0.21,0.13){2}{\line(1,0){0.21}}
\multiput(139.16,58.12)(0.21,0.14){2}{\line(1,0){0.21}}
\multiput(138.77,57.82)(0.13,0.1){3}{\line(1,0){0.13}}
\multiput(138.38,57.5)(0.13,0.11){3}{\line(1,0){0.13}}
\multiput(138.02,57.16)(0.12,0.11){3}{\line(1,0){0.12}}
\multiput(137.67,56.8)(0.12,0.12){3}{\line(0,1){0.12}}
\multiput(137.34,56.43)(0.11,0.12){3}{\line(0,1){0.12}}
\multiput(137.03,56.04)(0.1,0.13){3}{\line(0,1){0.13}}
\multiput(136.74,55.63)(0.15,0.2){2}{\line(0,1){0.2}}
\multiput(136.47,55.21)(0.14,0.21){2}{\line(0,1){0.21}}
\multiput(136.22,54.78)(0.12,0.22){2}{\line(0,1){0.22}}
\multiput(135.99,54.34)(0.11,0.22){2}{\line(0,1){0.22}}
\multiput(135.79,53.88)(0.1,0.23){2}{\line(0,1){0.23}}
\multiput(135.6,53.42)(0.09,0.23){2}{\line(0,1){0.23}}
\multiput(135.44,52.95)(0.16,0.47){1}{\line(0,1){0.47}}
\multiput(135.31,52.47)(0.14,0.48){1}{\line(0,1){0.48}}
\multiput(135.2,51.98)(0.11,0.49){1}{\line(0,1){0.49}}
\multiput(135.11,51.49)(0.09,0.49){1}{\line(0,1){0.49}}
\multiput(135.05,51)(0.06,0.49){1}{\line(0,1){0.49}}
\multiput(135.01,50.5)(0.04,0.5){1}{\line(0,1){0.5}}
\multiput(135,50)(0.01,0.5){1}{\line(0,1){0.5}}

\linethickness{0.3mm}
\put(145,60){\line(0,1){15}}
\linethickness{0.3mm}
\put(155,25){\line(0,1){25}}
\put(19,40){\makebox(0,0)[cc]{$\pdol$}}

\put(58,65){\makebox(0,0)[cc]{$\pdol_E$}}

\put(85,35){\makebox(0,0)[cc]{$\<,\>$}}

\put(125,35){\makebox(0,0)[cc]{$\<,\>$}}

\put(27,47){\makebox(0,0)[cc]{$\<,\>$}}

\put(150,65){\makebox(0,0)[cc]{$\hat\pd$}}

\put(15,74.5){\makebox(0,0)[cc]{$E$}}

\put(35,75){\makebox(0,0)[cc]{$\overline{E}$}}

\put(45,50){\makebox(0,0)[cc]{$=$}}

\put(105,50){\makebox(0,0)[cc]{$+$}}

\end{picture}

\noindent where  $\hat\pd: \overline{E}\to
\overline{E}\tens_A \Omega^{0,1}$ is the right $\pdol$-covariant derivative defined by $\hat\pd(\overline{c})=\overline{g}\tens \xi^*$, where $\xi\tens g=\pd_E(c)$, as defined in the previous picture.
We have
\begin{align*}
(\pdol\<e,\overline{c}\>)^*=&\ \pd\<c,\overline{e}\>=(\id\tens\<,\>)(\pd_E c\tens \overline{e})+(\<,\>\tens\id)(c\tens \tilde\pd(\overline{e})) 
= \xi\,\<g,\overline{e}\>+\<c,\overline{f}\>\, \kappa^*\ ,
\end{align*}
so on taking star again,
\begin{align*}
\pdol\<e,\overline{c}\>
=&\ \<g,\overline{e}\>^*\,\xi^*+\kappa\,\<c,\overline{f}\>^* =\<e,\overline{g}\>\,\xi^*+\kappa\,\<f,\overline{c}\>\cr
=&\ (\<,\>\tens\id)(e\tens \hat\pd(\overline{c}))+(\<,\>\tens\id)(\pdol_E(e)\tens \overline{c})\ .\qquad\largesquare
\end{align*}

\begin{proposition}
Suppose that the conditions for Proposition~\ref{unichernnog} hold, and that $E$ is a bimodule with 
 $(E,\pdol_E,\sigma_E)$ a left bimodule $\pdol$-connection on $E$ for $\sigma_E:E\tens_A\Omega^{0,1}A\to 
\Omega^{0,1}A\tens_A E$. Then the connection $\nabla_E$ in Proposition~\ref{unichernnog} is a bimodule connection, where
$\sigma_E:E\tens_A\Omega^{1,0}A\to 
\Omega^{1,0}A\tens_A E$ is defined by,  for $\eta\in\Omega^{1,0}A$
\begin{align*}
\sigma_E(e\tens\eta) =&\ (\<,\>\tens\id\tens\id)\big(\id\tens
(\id\tens \star^{-1})\Upsilon\,\overline{\sigma_E}\, \Upsilon^{-1}(\star\tens\id)
\tens\id\big)(e\tens\eta\tens\<,\>^{-1})
\end{align*}
or maybe more obviously as,
 for $\<,\>^{-1}=\overline{c}\tens g$, and where $\xi\tens k=\sigma_E(c\tens\eta^*)$
\begin{align*}
\sigma_E(e\tens\eta) =&\ \<e,\overline{k}\>\,\xi^*\tens g\ .
\end{align*}
\end{proposition}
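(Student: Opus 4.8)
The plan is to build $\nabla_E=\pd_E+\pdol_E$, where $\pdol_E$ is the given holomorphic $\pdol$-connection with its $(0,1)$-braiding (which I write $\sigma_E^{0,1}$) and $\pd_E$ is the $(1,0)$-part produced by Proposition~\ref{unichernnog}, and to check the two halves of the right Leibniz rule separately. Since $\pd_E$ lands in $\Omega^{1,0}\tens_A E$ and $\pdol_E$ in $\Omega^{0,1}\tens_A E$, while $\extd a=\pd a+\pdol a$, the bimodule-connection identity $\sigma_E(e\tens\extd a)=\nabla_E(e.a)-\nabla_E(e).a$ splits into the $(0,1)$-part $\sigma_E^{0,1}(e\tens\pdol a)=\pdol_E(e.a)-\pdol_E(e).a$, which holds by hypothesis, and the $(1,0)$-part $\sigma_E^{1,0}(e\tens\pd a)=\pd_E(e.a)-\pd_E(e).a$, which is what remains. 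The full braiding is then $\sigma_E^{1,0}\oplus\sigma_E^{0,1}$ on $\Omega^1=\Omega^{1,0}\oplus\Omega^{0,1}$, a bimodule map as a direct sum of bimodule maps.

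First I would record that the right $\pd$-derivative $\tilde\pd$ on $\overline E$ appearing in Proposition~\ref{unichernnog} is itself a right bimodule connection. Starting from $\pdol_E(c.a^*)=\pdol_E(c).a^*+\sigma_E^{0,1}(c\tens\pdol(a^*))$ and conjugating through the bar-module rule $a.\overline c=\overline{c.a^*}$ together with $\tilde\pd(\overline c)=\overline f\tens\kappa^*$ for $\kappa\tens f=\pdol_E(c)$, a short calculation gives $\tilde\pd(a.\overline c)=a.\tilde\pd(\overline c)+\tilde\sigma(\pd a\tens\overline c)$, with left braiding $\tilde\sigma=(\id\tens\star^{-1})\Upsilon\,\overline{\sigma_E}\,\Upsilon^{-1}(\star\tens\id):\Omega^{1,0}\tens_A\overline E\to\overline E\tens_A\Omega^{1,0}$. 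Tracking a general $\eta\in\Omega^{1,0}$ through the $\star,\Upsilon$ bookkeeping yields the explicit form $\tilde\sigma(\eta\tens\overline c)=\overline k\tens\xi^*$ where $\xi\tens k=\sigma_E^{0,1}(c\tens\eta^*)$; this is exactly the operator appearing inside the first displayed formula for $\sigma_E^{1,0}$.

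Next I would exploit metric compatibility, which from the proof of Proposition~\ref{unichernnog} reads $\pd\<e,\overline c\>=(\id\tens\<,\>)(\pd_E e\tens\overline c)+(\<,\>\tens\id)(e\tens\tilde\pd\,\overline c)$. Applying this to $e.a$, and separately to $e$ with $\overline c$ replaced by $a.\overline c$, then subtracting: the two $\pd\<-,-\>$ terms cancel because $\<e.a,\overline c\>=\<e,a.\overline c\>$ by balancing over $A$, and $(\<,\>\tens\id)(e\tens a.\tilde\pd\,\overline c)$ cancels $(\<,\>\tens\id)(e.a\tens\tilde\pd\,\overline c)$ for the same reason after inserting the right bimodule rule for $\tilde\pd$ from the previous step. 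What survives is
\begin{align*}
(\id\tens\<,\>)\big((\pd_E(e.a)-\pd_E(e).a)\tens\overline c\big)=(\<,\>\tens\id)\big(e\tens\tilde\sigma(\pd a\tens\overline c)\big)
\end{align*}
for all $\overline c\in\overline E$. Because $\<,\>$ is non-degenerate, every $X\in\Omega^{1,0}\tens_A E$ is recovered as $X=(\id\tens\<,\>\tens\id)(X\tens\<,\>^{-1})$, so the maps $X\mapsto(\id\tens\<,\>)(X\tens\overline c)$ separate points. Feeding the surviving identity into this reconstruction with $\<,\>^{-1}=\overline c\tens g$ and $\tilde\sigma(\pd a\tens\overline c)=\overline k\tens\xi^*$ ($\xi\tens k=\sigma_E^{0,1}(c\tens(\pd a)^*)$) produces $\pd_E(e.a)-\pd_E(e).a=\<e,\overline k\>\xi^*\tens g$, the second (``more obvious'') formula, while running the same reconstruction invariantly produces the first; thus the two formulas agree and define $\sigma_E^{1,0}$, which is a bimodule map as the composite $(\<,\>\tens\id\tens\id)(\id\tens\tilde\sigma\tens\id)(\id\tens\id\tens\<,\>^{-1})$ of bimodule maps. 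This establishes the right Leibniz rule and hence that $\nabla_E$ is a bimodule connection.

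I expect the main obstacle to be the bookkeeping in the two calculational steps: identifying $\tilde\sigma$ through the antilinear $\star$ and $\Upsilon$ conjugations, and carrying out the cancellations in the metric-compatibility computation while keeping the left/right $A$-actions and the balancing $\<e.a,\overline c\>=\<e,a.\overline c\>$ straight. The non-degeneracy reconstruction is the conceptual crux that upgrades the paired identity to an honest equality in $\Omega^{1,0}\tens_A E$, but it is mechanical once the separation property is set up.
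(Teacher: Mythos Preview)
Your proof is correct and follows essentially the same route as the paper. The paper computes $\pd_E(e.a)-\pd_E(e).a$ directly from the explicit diagrammatic formula for $\pd_E$ and the centrality of $\<,\>^{-1}$ (i.e.\ $a.\overline c\tens g=\overline c\tens g.a$), whereas you phrase the same computation through the metric-compatibility identity and then reconstruct via non-degeneracy; but since the diagrammatic formula \emph{is} metric compatibility rearranged, the two computations are the same. The core step---identifying $\tilde\pd(a.\overline c)-a.\tilde\pd(\overline c)=\overline k\tens\xi^*$ with $\xi\tens k=\sigma_E^{0,1}(c\tens\pdol a^*)$---is identical in both, and your $(\pd a)^*=\pdol(a^*)$ matches the paper's convention.
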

\begin{proof} 
From the diagram in Proposition~\ref{unichernnog}, where $\<,\>^{-1}(1)=\overline{c}\tens g$,
\begin{align*}
\pd_E(e.a) =&\ \pd\<e\,a\tens\overline{c}\>\tens g
-(\<,\>\tens\id\tens\id)(e\,a\tens \tilde\pd(\overline{c})\tens g) \cr
=&\ \pd\<e\tens a\,\overline{c}\>\tens g
-(\<,\>\tens\id\tens\id)(e\tens a\,\tilde \pd(\overline{c})\tens g) \cr
=&\ \pd\<e\tens a\,\overline{c}\>\tens g
-(\<,\>\tens\id\tens\id)(e\tens \tilde \pd(a\,\overline{c})\tens g) \cr
&\ +(\<,\>\tens\id\tens\id)(e\tens \tilde \pd(a\, \overline{c})\tens g)
-(\<,\>\tens\id\tens\id)(e\tens a\,\tilde \pd(\overline{c})\tens g)\ .
\end{align*}
Now $a.\overline{c}\tens g=\overline{c}\tens g.a\in \overline{E}\tens_A E$, so we have
\begin{align*}
\pd_E(e.a) 
=&\ \pd\<e\tens \overline{c}\>\tens g\,a
-(\<,\>\tens\id\tens\id)(e\tens \tilde \pd(\overline{c})\tens g\,a) \cr
&\ +(\<,\>\tens\id\tens\id)(e\tens \tilde \pd(a\, \overline{c})\tens g)
-(\<,\>\tens\id\tens\id)(e\tens a\,\tilde \pd(\overline{c})\tens g)\ ,
\end{align*}
so we have
\begin{align*}
\pd_E(e.a)-\pd_E(e).a =&\ 
(\<,\>\tens\id)\big(e\tens ( \tilde \pd(\overline{c.a^*})-a\,\tilde \pd(\overline{c}))\big)\tens g\ .
\end{align*}
Now $\pdol_E(c.a^*)=\pdol_E(c).a^*+\sigma(c\tens\pdol a^*)$, and if we put 
$\kappa\tens f=\pdol_E(c)$ and $\xi\tens k=\sigma_E(c\tens\pdol a^*)$ then
\begin{align*}
 \tilde \pd(\overline{c.a^*})-a\,\tilde \pd(\overline{c}) =&\ \overline{f.a^*}\tens\kappa^*
 + \overline{k}\tens\xi^*-a\,\overline{f}\tens\kappa^* 
 = \overline{k}\tens\xi^*\ ,
\end{align*}
giving
\begin{align*}
\pd_E(e.a)-\pd_E(e).a =&\ 
\<e,\overline{k}\>.\xi^*\tens g\ .
\end{align*}
Finally we use
\begin{align*}
(\id\tens \star^{-1})\Upsilon\,\overline{\sigma_E}\, \Upsilon^{-1}(\star\tens\id)(\pd a\tens\overline{c})
=\overline{k}\tens\xi^*\ ,
\end{align*}
which gives the result.
\end{proof}

\subsection{Christoffel symbol approach}
We shall use the matrix formalism for finitely generated projective modules and results given in \cite{BegMa3}. Of course the use of projection matrices for 
finitely generated projective modules is long established, but the use of matrices for inner products and Christoffel symbols in noncommutative geometry is more recent. 

Suppose that $E$ is a left finitely generated projective module, and fix a dual basis $e^i\in E$ and $e_i\in E^\circ$ for $1\le i\le n$, where $E^\circ=
{}_A\Hom(E,A)$. Then $P_{ji}=e_i(e^j)=\ev(e^j\tens e_i)$ is a matrix with entries in $A$, and $P^2=P$. 
We can describe a left covariant derivative $\nabla_E$ by 
Christoffel symbols, defined by
\begin{eqnarray} \label{chrsymbdef}
\Gamma_k^i = -\,(\id\tens\ev)(\nabla_E e^i \tens e_k)\in\Omega^1 \ ,
\end{eqnarray}
so we have
\begin{eqnarray}\label{christoffeldef}
\nabla_E e^i \,=\, -\,\Gamma_k^i\tens e^k\ .
\end{eqnarray}
Fit the Christoffel symbols
into matrix notation by setting
\begin{eqnarray}\label{chrmatrix}
(\Gamma)_{ij}\,=\,\Gamma^i_j\ .
\end{eqnarray}
Then a necessary and sufficient condition that $\Gamma\in M_n(\Omega^1)$ is the Christoffel symbols for a left connection on $E$ is that
\begin{eqnarray*}
\Gamma\,P\,=\,\Gamma\ ,\quad \Gamma\,=\,P\,\Gamma-\extd P.P\ .
\end{eqnarray*}
The curvature of the connection is given by
\[
R_E(e^i)=-((\extd\Gamma+\Gamma\wedge\Gamma).P)_{ik}\tens e^k\ .
\]

Suppose that we set $g^{ij}=\<e^i,\overline{e^j}\>$, so the hermitian condition gives
$g^{ij*}=g^{ji}$. This corresponds to the invertible bimodule map
$G:\overline{E}\to E^\circ$ being
 $G(\overline{e^i})=e_j.g^{ji}$, and we write the inverse as 
$G^{-1}(e_i)=\overline{g_{ij}.e^j}$, where without
loss of generality
we can assume that $g_{ij}.\ev(e^j\tens e_k)= g_{ik}$.
It is convenient to define matrices $g^\bullet,g_\bullet\in M_n(A)$ by 
\begin{eqnarray}\label{matrixform}
 (g_\bullet)_{ij}=g_{ij}\ ,\quad (g^\bullet)_{ij}=g^{ij}\ .
\end{eqnarray}
and then we have
\begin{eqnarray}\label{matrixform77}
g^{\bullet*}=g^\bullet \ ,\quad g_\bullet^*=g_\bullet\ ,\quad
g^\bullet g_\bullet=P\ ,\quad g_\bullet P=g_\bullet\ ,\quad Pg^\bullet= g^\bullet\ .
\end{eqnarray}

\begin{proposition} \label{unichern}
Given a fgp holomorphic left $A$-module $E$ with a hermitian metric $\<,\>:E\tens \overline{E}\to A$, there is a unique connection
$\nabla_E:E\to\Omega^1 A\tens_A E$, called the Chern connection, which preserves the hermitian metric and for which
$(\pi^{0,1}\tens\id)\nabla_E=\pdol_E$, the canonical $\pdol$-operator for $E$. If we write $\Gamma=\Gamma_++\Gamma_-$, with $\Gamma_+\in M_n(\Omega^{1,0})$ and 
$\Gamma_-\in M_n(\Omega^{0,1})$, then $\Gamma_-$ is determined by $\pdol_E$ and
\begin{align*} 
-\,\Gamma_+= \partial g^\bullet.g_\bullet+g^\bullet\ (\Gamma_-)^* \, g_\bullet\ .
\end{align*}
\end{proposition}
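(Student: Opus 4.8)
The plan is to translate the abstract metric-preservation identity \eqref{hermcon1} into the matrix/Christoffel language of this subsection and then read off $\Gamma_+$ from the $(1,0)$-component. Existence and uniqueness of $\nabla_E$ as a hermitian connection with $(\pi^{0,1}\tens\id)\nabla_E=\pdol_E$ are already supplied by Proposition~\ref{unichernnog}; the only new content here is to extract the explicit matrix formula and to see that it pins $\Gamma_+$ down once $\Gamma_-$ is fixed.

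First I would evaluate the metric-preservation equation \eqref{hermcon1} on the pair $e^i\tens\overline{e^j}$ of dual-basis elements. Writing $\nabla_E e^i=-\Gamma^i_k\tens e^k$ and using the defining formula $\tilde\nabla(\overline{e^j})=\overline{e^k}\tens(-\Gamma^j_k)^*$ for the associated right connection (so $\kappa\tens f=-\Gamma^j_k\tens e^k$ gives $\tilde\nabla\,\overline{e^j}=\overline{f}\tens\kappa^*$), the two right-hand terms become $-\Gamma^i_k g^{kj}$ and $-g^{ik}(\Gamma^j_k)^*$ respectively, where $g^{ij}=\<e^i,\overline{e^j}\>$. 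With the conjugate-transpose convention $(\Gamma^*)_{ij}=(\Gamma_{ji})^*$ — the same convention under which $g^{\bullet*}=g^\bullet$ in \eqref{matrixform77} — this packages into
\[
\extd g^\bullet=-\,\Gamma\,g^\bullet-g^\bullet\,\Gamma^*\ .
\]

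Next I would split this by bidegree using $\extd=\pd+\pdol$ and $\Gamma=\Gamma_++\Gamma_-$. Since the $*$-operation interchanges $\Omega^{1,0}$ and $\Omega^{0,1}$, we have $\Gamma_+,(\Gamma_-)^*\in M_n(\Omega^{1,0})$ and $\Gamma_-,(\Gamma_+)^*\in M_n(\Omega^{0,1})$, so the $(1,0)$-component reads
\[
\pd g^\bullet=-\,\Gamma_+\,g^\bullet-g^\bullet\,(\Gamma_-)^*\ ,
\]
the $(0,1)$-component being its conjugate and hence redundant. Here $\Gamma_-$ is exactly the Christoffel matrix of $\pdol_E$, forced by $(\pi^{0,1}\tens\id)\nabla_E=\pdol_E$, so it is determined by the data. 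I would then multiply this $(1,0)$-equation on the right by $g_\bullet$ and invoke \eqref{matrixform77}: from $g^\bullet g_\bullet=P$ together with $\Gamma_+P=\Gamma_+$ (which follows from $\Gamma P=\Gamma$, as $P$ has entries in $A$ and so preserves bidegree) one gets $\Gamma_+ g^\bullet g_\bullet=\Gamma_+$, whence
\[
-\,\Gamma_+=\pd g^\bullet.g_\bullet+g^\bullet\,(\Gamma_-)^*\,g_\bullet\ ,
\]
as claimed. As a consistency check, $\Gamma_+P=\Gamma_+$ is automatic from this formula using $g_\bullet P=g_\bullet$, and the remaining connection condition $\Gamma=P\Gamma-\extd P.P$ holds because the $\nabla_E$ of Proposition~\ref{unichernnog} realises precisely these Christoffel symbols.

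The main obstacle is the careful bookkeeping in the first step: correctly propagating the $*$-operation through the conjugate module $\overline{E}$ and the right connection $\tilde\nabla$, and in particular using that $*$ sends $\Omega^{1,0}$ to $\Omega^{0,1}$, so that the bidegree splitting isolates $\Gamma_+$ together with $(\Gamma_-)^*$ in the $(1,0)$ slot rather than mixing the two degrees. Once the matrix identity $\extd g^\bullet=-\Gamma g^\bullet-g^\bullet\Gamma^*$ is established, the rest is linear algebra with the idempotent $P$.
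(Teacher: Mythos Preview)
Your proposal is correct and follows essentially the same route as the paper: evaluate the metric-preservation identity on $e^i\tens\overline{e^j}$ to get $\extd g^\bullet=-\Gamma g^\bullet-g^\bullet\Gamma^*$, take the $(1,0)$-component using that $*$ swaps $\Omega^{1,0}$ and $\Omega^{0,1}$, and rearrange. You make explicit the final step (right-multiply by $g_\bullet$ and use $\Gamma_+P=\Gamma_+$) which the paper leaves as ``rearranged to give the answer'', and you correctly appeal to Proposition~\ref{unichernnog} for existence rather than re-deriving it.
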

\proof Take a dual basis $(e^i,e_i)$ for $E$, and set $\nabla_E(e^i)=-\,\Gamma^i_{a}\tens e^a$, using the definition of the Christoffel symbols in (\ref{chrsymbdef}). Then the equation (\ref{hermcon1}) for preserving the metric 
evaluated at $e^i\tens \overline{e^j}$ becomes
\begin{eqnarray*}
\extd\,g^{ij} &=& -\,(\id\tens\<,\>)(\Gamma^i_{a}\tens e^a\tens \overline{e^j})-(\<,\>\tens\id)(e^i\tens \overline{e^a}\tens (\Gamma^j_{a})^*)\cr
&=& -\,\Gamma^i_{a}\,g^{aj}-g^{ia}\,(\Gamma^j_{a})^*\ .
\end{eqnarray*}
 Applying $\pi^{1,0}$ to this gives
\begin{eqnarray*}
\pi^{1,0}(\extd g^{ij}) 
&=& -\,\pi^{1,0}(\Gamma^i_{a})\,g^{aj}-g^{ia}\,\pi^{1,0}((\Gamma^j_{a})^*) \cr
&=&  -\,\pi^{1,0}(\Gamma^i_{a})\,g^{aj}-g^{ia}\,(\pi^{0,1}(\Gamma^j_{a}))^* \ ,
\end{eqnarray*}
which is rearranged to give the answer. \qquad$\largesquare$

\begin{proposition}  
The $\Omega^{0,2}$ and $\Omega^{2,0}$ components of the curvature of the Chern connection in Proposition~\ref{unichern}  vanish.
\end{proposition}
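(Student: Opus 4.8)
The plan is to decompose the curvature by bidegree and to dispatch the two pieces by quite different mechanisms: the $\Omega^{0,2}$-component will vanish because it \emph{is} the holomorphic curvature of $\pdol_E$, while the $\Omega^{2,0}$-component will vanish because metric compatibility forces the curvature to be skew-Hermitian and $\star$ interchanges $\Omega^{2,0}$ with $\Omega^{0,2}$.

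First I would set $F=\extd\Gamma+\Gamma\wedge\Gamma\in M_n(\Omega^2)$, so that the curvature is $R_E(e^i)=-(F.P)_{ik}\tens e^k$, and split $F=F_{2,0}+F_{1,1}+F_{0,2}$ using $\extd=\pd+\pdol$ and $\Gamma=\Gamma_++\Gamma_-$ with $\Gamma_+\in M_n(\Omega^{1,0})$, $\Gamma_-\in M_n(\Omega^{0,1})$. A bidegree count gives $F_{0,2}=\pdol\Gamma_-+\Gamma_-\wedge\Gamma_-$ and $F_{2,0}=\pd\Gamma_++\Gamma_+\wedge\Gamma_+$. For the $\Omega^{0,2}$ part, the condition $(\pi^{0,1}\tens\id)\nabla_E=\pdol_E$ means $\pdol_E(e^i)=-(\Gamma_-)^i_a\tens e^a$, so $F_{0,2}.P$ is precisely the matrix expression for the holomorphic curvature $(\pdol\tens\id-\id\wedge\pdol_E)\pdol_E$. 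Since $(E,\pdol_E)$ is a holomorphic module, this vanishes, giving $F_{0,2}.P=0$.

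For the $\Omega^{2,0}$ part I would extract a skew-Hermiticity relation from the metric. In the proof of Proposition~\ref{unichern} the compatibility condition is the matrix identity $\extd g^\bullet=-\Gamma g^\bullet-g^\bullet\Gamma^\dagger$, where $(\Gamma^\dagger)_{ij}=(\Gamma_{ji})^*$. Applying $\extd$, using $\extd^2=0$ and the graded Leibniz rule, and resubstituting the identity, the cross terms carrying $g^\bullet$ cancel and one is left with $(\extd\Gamma+\Gamma\wedge\Gamma)\,g^\bullet=-\,g^\bullet(\extd\Gamma^\dagger-\Gamma^\dagger\wedge\Gamma^\dagger)$. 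Now $(\extd\Gamma)^\dagger=\extd\Gamma^\dagger$ and, from $(\xi\wedge\eta)^*=-\eta^*\wedge\xi^*$ on $1$-forms, $(\Gamma\wedge\Gamma)^\dagger=-\Gamma^\dagger\wedge\Gamma^\dagger$, so the right-hand side is $-g^\bullet F^\dagger$ and hence $Fg^\bullet+g^\bullet F^\dagger=0$. Right-multiplying by $g_\bullet$ and using $g^\bullet g_\bullet=P$ from (\ref{matrixform77}) gives $F.P=-g^\bullet F^\dagger g_\bullet$. Since $\star$ restricts to an isomorphism $\Omega^{2,0}\to\Omega^{0,2}$ (because $(\Omega^{1,0})^*=\Omega^{0,1}$ for a star-compatible complex structure), the bidegree $(2,0)$ component of $F^\dagger$ is $(F_{0,2})^\dagger$, whence $F_{2,0}.P=-g^\bullet(F_{0,2})^\dagger g_\bullet$. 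Finally, taking $\dagger$ of $Pg^\bullet=g^\bullet$ and using $g^{\bullet\dagger}=g^\bullet$ yields $g^\bullet P^\dagger=g^\bullet$, so $g^\bullet(F_{0,2})^\dagger g_\bullet=g^\bullet(F_{0,2}P)^\dagger g_\bullet=0$ by the first part. Therefore $F_{2,0}.P=0$.

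The hard part will be the $\Omega^{2,0}$ computation. Obtaining the skew-Hermiticity relation with the correct sign requires careful tracking of the star operation on wedge products of matrix-valued $1$-forms, and — crucially — the non-invertibility of the metric matrix means one cannot merely ``conjugate'' the identity $F_{0,2}.P=0$, but must carry the projector through using $g^\bullet g_\bullet=P$ and $g^\bullet P^\dagger=g^\bullet$. The one structural input I would pin down at the outset is that $\star$ genuinely exchanges $\Omega^{2,0}$ and $\Omega^{0,2}$, i.e. that the almost complex structure is compatible with the star calculus; everything else is bookkeeping around that fact.
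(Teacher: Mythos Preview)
Your argument is correct and takes a genuinely different route from the paper's own proof. Both proofs dispatch the $\Omega^{0,2}$ part identically (it is the holomorphic curvature of $\pdol_E$). For the $\Omega^{2,0}$ part the paper substitutes the explicit formula $-\Gamma_+=\pd g^\bullet.g_\bullet+g^\bullet\Gamma_-^*g_\bullet$ from Proposition~\ref{unichern}, expands $\pd\Gamma_++\Gamma_+\wedge\Gamma_+$ by hand, uses the holomorphic-curvature vanishing to kill one term, and then reduces what remains to an expression of the form $(\cdots)P^*\wedge\pd(P^*).g_\bullet$, which dies by the idempotent identity $Q.\pd Q.Q=0$ for $Q=P^*$. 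Your route instead differentiates the metric-compatibility identity $\extd g^\bullet=-\Gamma g^\bullet-g^\bullet\Gamma^\dagger$ once more to obtain the skew-Hermiticity $F.P=-g^\bullet F^\dagger g_\bullet$, and then uses that $\star$ swaps $\Omega^{2,0}$ with $\Omega^{0,2}$ to transport the already-known $(0,2)$ vanishing to the $(2,0)$ side; the projector bookkeeping via $g^\bullet P^\dagger=g^\bullet$ is exactly what is needed since $g^\bullet$ is only a one-sided inverse. Your approach is more structural and makes transparent why the two vanishings are Hermitian conjugates of one another, at the cost of invoking the star-compatibility $(\Omega^{p,q})^*=\Omega^{q,p}$ of the complex structure, which the paper's direct computation does not need explicitly. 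The paper's approach, by contrast, stays entirely within the $\Gamma_+$ side and uses only the explicit Christoffel formula plus a projector trick, so it is more self-contained but less illuminating.
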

\proof
First
\begin{align*}
R_E(e^i)=&\ (\extd\tens\id_E-\id\wedge\nabla_E)\nabla_E(e^i) = -\,\extd\Gamma^i{}_{a}\tens e^a + \Gamma^i{}_{a}\wedge\nabla_E e^a \cr
=&\ -\,\extd\Gamma^i{}_{a}\tens e^a - \Gamma^i{}_{a}\wedge\Gamma^a{}_{b}\tens e^b = -\,(  \extd\Gamma^i{}_{a} + \Gamma^i{}_{j}\wedge\Gamma^j{}_{a}  )P_{ab}\tens e^b\ .
\end{align*}
The $\Omega^{0,2}$ component of the curvature vanishes because $\pdol_E$ has zero holomorphic curvature. 
From Proposition~\ref{unichern}, 
\begin{align*}
\pd \Gamma_+ =&\  \partial g^\bullet\wedge\partial g_\bullet-\partial g^\bullet\wedge \Gamma_-{}^* \, g_\bullet
-g^\bullet\ \partial (\Gamma_-{}^*) \, g_\bullet   +   g^\bullet\ \Gamma_-{}^* \wedge \partial g_\bullet\ ,\cr
\Gamma_+ \wedge \Gamma_+ =&\ \partial g^\bullet.g_\bullet\wedge \partial g^\bullet.g_\bullet+
\partial g^\bullet \wedge  \Gamma_-{}^* \, g_\bullet
+  g^\bullet\ \Gamma_-{}^* \, g_\bullet   \wedge \partial g^\bullet.g_\bullet
+ g^\bullet\ \Gamma_-{}^* \wedge \Gamma_-{}^* \, g_\bullet\ ,
\end{align*}
and then
\begin{align*}
\pd \Gamma_+ + \Gamma_+ \wedge \Gamma_+=&\ (\pd g^\bullet+ g^\bullet\, \Gamma_-{}^*)\wedge(\pd g_\bullet+ g_\bullet.\pd g^\bullet. g_\bullet) -g^\bullet\ \partial (\Gamma_-{}^*) \, g_\bullet
+ g^\bullet\ \Gamma_-{}^* \wedge \Gamma_-{}^* \, g_\bullet \cr
=&\  (\pd g^\bullet+ g^\bullet\, \Gamma_-{}^*)\wedge(\pd g_\bullet+ g_\bullet.\pd g^\bullet. g_\bullet) - g^\bullet\ (\pdol \Gamma_-+\Gamma_- \wedge \Gamma_-)^* \, g_\bullet\ ,
\end{align*}
and the last bracket vanishes as it is just the holomorphic curvature of the holomorphic connection. 
Then
\begin{align*}
(\pd g_\bullet+ g_\bullet.\pd g^\bullet. g_\bullet).P = \pd(P^*).g_\bullet
\end{align*}
and
\begin{align*}
(\pd g^\bullet+ g^\bullet\, \Gamma_-{}^*).P^* =\pd g^\bullet+ g^\bullet\, \Gamma_-{}^*\ ,
\end{align*}
so
\begin{align*}
(\pd \Gamma_+ + \Gamma_+ \wedge \Gamma_+).P
=&\  (\pd g^\bullet+ g^\bullet\, \Gamma_-{}^*).P^* \wedge \pd(P^*).g_\bullet
\end{align*}
where $Q=P^*=g_\bullet g^\bullet$ obeys $Q^2=Q$. 
Differentiating this gives $\pd Q.Q=(1-Q).\pd Q$, so $Q.\pd Q.Q=0$.\qquad$\largesquare$

\section{Examples of Chern connections}

\subsection{The Chern connection on $\Omega^{1,0}$ for $M_2(\C)$}
For the algebra $A=M_2(\C)$, the decomposition $\Omega^1=\Omega^{1,0}\oplus \Omega^{0,1}$ of the differential calculus
gives an integrable almost complex structure. The complex differentials are given by
the graded commutators $\pd=[E_{12}s,-\}$ and $\pdol=[E_{21}t,-\}$.
There is a Riemannian structure 
\[
\<u\oplus v,\overline{x\oplus y}\> = ux^*+vy^*\in A\ ,
\]
and this can be converted to a Hilbert space inner product by taking the trace.

Now put a holomorphic structure on the bimodule $E=\Omega^{1,0}$. For the holomorphic connection $\pdol_E$ use 
\[
\Omega^{1,0} \stackrel{\pdol}\longrightarrow \Omega^{1,1} \stackrel{\wedge^{-1}}\longrightarrow 
\Omega^{0,1}\tens_{M_2(\C)} \Omega^{1,0}\ ,
\]
where we have used the fact that $\wedge:\Omega^{0,1}\tens_{M_2(\C)} \Omega^{1,0}\to \Omega^{1,1} $ is a bimodule isomorphism. 
To check that this is a left $\pdol$-connection, for $\xi\in \Omega^{1,0}$,
\[
\pdol_E(a.\xi)=\wedge^{-1}(\pdol a\wedge\xi+a.\pdol\xi)=\pdol a\tens\xi+a.\pdol_E(\xi)\ .
\]
The curvature of this $\pdol$-connection maps to $\Omega^{0,2}\tens_{M_2(\C)} \Omega^{1,0}$, and thus must be zero as we set $s^2=t^2=0$, thus we have exhibited a holomorphic structure on $E=\Omega^{1,0}$. 

We take the single basis element $s$ on $E=\Omega^{1,0}$. 
As $\pdol_E(s)=2\,E_{21}t\tens s$ we get the Christoffel symbol $\Gamma_{-1,1}=-2\,E_{21}t$. We take the metric
\[
\<b\,s,\overline{a\,s}\>=b\,a^*\ ,
\]
so $g^\bullet$ is a 1 by 1 matrix with the single element $g^{1,1}=1$. 
Then by Proposition~\ref{unichern}
\[
\Gamma_{+1,1}=-\,(-2\,E_{21}t)^*=-2\,E_{12}s\ .
\]
so we have
\[
\nabla_E(s)=2\,E_{12}s\tens s + 2\,E_{21}t\tens s \ .
\]
We get a bimodule covariant derivative, as
\[
\nabla_E(s.a)-\nabla_E(s).a=\nabla_E(a.s)-\nabla_E(s).a=\extd a\tens s+[a,\nabla_E(s)]=-\extd a\tens s\ ,
\]
which extends to the map
\[
\sigma_E(a.s\tens\xi)=-a.\xi\tens s\ .
\]
This gives a natural bimodule connection on $\Omega^{1,0}$ which classically on a Kahler manifold would be part of the Levi-Civita connection for the hermitian metric. Note that in Section~\ref{SecMatrix} we have coincidentally taken $S=\Omega^{1,0}\oplus\Omega^{0,1}$ as a bundle but in this example the Chern connection is not the one used there to construct the $\dirac$ operator.

\subsection{Chern connection on the standard quantum sphere}
On $A=\C_q[S^2]$ take the holomorphic connection on $\CS_+$ (generated by $f^+$ as given earlier) given by $\pdol_{\CS_+}:\CS_+\to \Omega^{0,1}\tens_A \CS_+$, where
\[
\pdol_{\CS_+}(x\,f^+)=\pdol x.k_1\tens k_2.f^+\ ,
\]
where $k=a\tens d-q^{-1}\,c\tens b=k_1\tens k_2$ in a compact notation with summation understood. 
(Note $\pdol x$ denotes taking the $e^-$ component of $\pi\extd$.)
This has zero curvature as $\Omega^{0,2}=0$ and we are in the  case where the grading operator $\gamma$ splits the spinor bundle into two parts, one of which is holomorphic. Now we use (\ref{sideswitch}) to switch the side of the antilinearity on the inner product in Section~\ref{ncHopfDirac} as follows:
 \begin{align*}
\<x.f^+,\overline{y.f^+}\> =&\ \<\overline{\J( x.f^+) } , \J^{-1}(y.f^+)\>=\<\delta\,\overline{x^*.f^- } , \delta\,y^*.f^-\>
=\delta^2\mu\, x\,y^*\ .
\end{align*}
For $\dirac$ to be hermitian, Proposition~\ref{prrp1} gives $\delta^2\mu=q$, but as in fact we will only be interested in the inner product on $\CS_+$, we are free to absorb this $q$ factor in the normalisation of the inner product and hence we omit it in what follows. 
Then 
we can write $\<,\>^{-1}(1)=\overline{k_1^*.f^+}\tens k_2.f^+$. 
Then the formula in Proposition~\ref{unichernnog} gives
\begin{align*}
\pd_{\CS_+}(x\,f^+) =&\ \pd\<x\,f^+, \overline{k_1^*.f^+}\>\tens k_2.f^+ -
( \<,\>\tens\id\tens\id )  (x\,f^+\tens \tilde\pd(\overline{k_1^*.f^+}) \tens k_2.f^+)
\end{align*}
Now, where $k_1'\tens k_2'$ is an independent copy of $k$,
\[
\pdol_{\CS_+}(k_1^*.f^+)=\pdol k_1^*.k_1'\tens k_2'.f^+
\]
so
\begin{align*}
\pd_{\CS_+}(x\,f^+) =&\ \pd\<x\,f^+\tens \overline{k_1^*.f^+}\>\tens k_2.f^+ -
 \<x\,f^+,\overline{k_2'.f^+}\>\,(\pdol k_1^*.k_1')^*   \tens k_2.f^+   \cr
 =&\ \pd(x\,k_1)\tens k_2.f^+ -
 x\,(k_2')^*\,(k_1')^*\,\pd k_1   \tens k_2.f^+  = \pd x.k_1\tens k_2.f^+ \ .
\end{align*}
Thus the Chern connection is just
\[
\nabla_{\CS_+}(x\,f^+) =\pi\extd x.k_1\tens k_2.f^+ \ ,
\]
which is just the connection on $\CS_+$ given in Section~\ref{ncHopfDirac}.

We also compute the example $E=\Omega^{1,0}$. We have $\pdol(x\,e^+)=\pdol x\wedge e^+$, and using the isomorphism $\wedge:\Omega^{0,1}\tens_A \Omega^{1,0}\to
\Omega^{1,1}$ we write a holomorphic connection on $E$ as, where $k_1'\tens k_2'$ is another copy of $k_1\tens k_2$ (similarly for further primes)
\[
\pdol_E(x\,e^+) = \pdol x.k_1k_1' \tens k_2'k_2\,e^+\ .
\]
We take the inner product
\[
\<x\,e^+,\overline{y\,e^+}\>_E= x\,y^*\ ,
\]
and then $\<,\>_E^{-1}=\overline{k_1^{\prime*}k_1^*\,e^+} \tens k_2'k_2\,e^+$. Now using
\[
\pdol_E(k_1^{\prime*}k_1^*\,e^+) = \pdol (k_1^{\prime*}k_1^*).k_1''k_1''' \tens k_2'''k_2''\,e^+\ .
\]
we get
\begin{align*}
(\tilde\pd\tens\id)\<,\>_E^{-1} =&\  \overline{  k_2'''k_2''\,e^+  }    \tens (\pdol (k_1^{\prime*}k_1^*).k_1''k_1''' )^*        \tens k_2'k_2\,e^+\ .
\end{align*}
We use the diagram formula in Proposition~\ref{unichernnog} to write
\begin{align*}
\pd_E(x\,e^+) =&\  \pd\< x\,e^+, \overline{k_1^{\prime*}k_1^*\,e^+} \ \>_E \tens k_2'k_2\,e^+ 
-    \< x\,e^+, \overline{  k_2'''k_2''\,e^+  }    \>\, (\pdol (k_1^{\prime*}k_1^*).k_1''k_1''' )^*        \tens k_2'k_2\,e^+ \cr
=&\  \pd(x\,k_1k_1') \tens k_2'k_2\,e^+ 
-    x\,( k_2'''k_2'')^*  \, (k_1''k_1''' )^*   (\pdol (k_1^{\prime*}k_1^*) )^*        \tens k_2'k_2\,e^+ \cr
=&\  \pd(x\,k_1k_1') \tens k_2'k_2\,e^+ 
-  x\, \pd(k_1k_1')      \tens k_2'k_2\,e^+ = \pd(x).k_1k_1'\tens k_2'k_2\,e^+ \ .
\end{align*}
Putting these parts together, we get
\[
\nabla_E(x\,e^+) = \pi\extd x.k_1k_1'\tens k_2'k_2\,e^+\ ,
\]
which, given that the relevant component of the metric in \cite{Ma:rieq} can be written as $g_{-+}=e^-k_1k_2'\tens k_2'k_2 e^+$, is the same as the $\Omega^{1,0}$ part of the quantum Levi-Civita connection on the $q$-sphere found there. Thus, both the connection for the Dirac operator and the Levi-Civita connection for the $q$-sphere are obtained from the Chern construction.

\subsection{A Chern connection on the quantum open disk}
The calculus on the quantum disk in Section~\ref{diskdir} was constructed to be 
$U_q(sl_2)$ invariant, and also carries a hint of the hyperbolic structure. If we look for a central quantum symmetric metric, we are naturally led to  
\[
g=\Y^{-2}\,(\extd z\tens \extd \bar z+q^{-2}\, \extd \bar z\tens \extd z)
\]
Of course, this cannot be regarded as being a Riemannian structure for the closed disk, but rather it lives on the open disk, in fact if we let $q\to 1$ we get the classical hyperbolic metric. The inverse of $w$ appearing here indicates that we are dealing with unbounded functions on the disk. Now for $n\ge 1$,
\[
q^{1/2}X_+\la(\Y^n\Y^{-n})=0
\]
so
\[
q^{1/2}X_+\la(\Y^{-n})=q^{-1}\Y^{-n}\bar z[n]_{q^{-2}}=q^{2n-1}\bar z[n]_{q^{-2}}\Y^{-n}\ .
\]
Now we check that the metric is invariant to the $U_q(sl_2)$ action. This is easy for $q^{H\over 2}$ while for $X_+$ ($X_-$ is similar), 
\begin{align*}
& q^{1/2}X_+\la(\Y^{-2}\,(\extd z\tens \extd \bar z+q^{-2}\, \extd \bar z\tens \extd z)) \cr
=&\ (q^{1/2}X_+\la(\Y^{-2}))\,(   q^{H\over 2}  \la (\extd z\tens \extd \bar z+q^{-2}\, \extd \bar z\tens \extd z)) \cr
& +\ (q^{-H\over 2} \la(\Y^{-2}))\,(   X_+ \la (\extd z\tens \extd \bar z+q^{-2}\, \extd \bar z\tens \extd z)) \cr
=&\ q^{3}\bar z[2]_{q^{-2}}\Y^{-2}(\extd z\tens \extd \bar z+q^{-2}\, \extd \bar z\tens \extd z)\cr
& +\ \Y^{-2}\,(   q^{-H\over 2} \la\extd z\tens X_+ \la  \extd \bar z+q^{-2}\,X_+ \la  \extd \bar z\tens q^{H\over 2} \la \extd z)) \cr
=&\ q^{3}\bar z[2]_{q^{-2}}\Y^{-2}(\extd z\tens \extd \bar z+q^{-2}\, \extd \bar z\tens \extd z)\cr
& -\ \Y^{-2}\,(1+q^{-2})\,(   q\,\extd z\tens \bar z. \extd \bar z +q^{-3}\, \bar z. \extd \bar z\tens \extd z)) \cr
=&\ q^{3}\bar z\, \Y^{-2}([2]_{q^{-2}}  -1-q^{-2}  )(\extd z\tens \extd \bar z+q^{-2}\, \extd \bar z\tens \extd z)=0\ .
\end{align*}

An integrable almost complex structure is given by $J(\extd z)=\mathrm{i}\, \extd z$ and 
$J(\extd \bar z)=-\mathrm{i}\, \extd \bar z$. 
Now we examine the Chern connection for the holomorphic bundle $E=\Omega^{1,0}$ (we use $E$ as $\Omega^{1,0}$ is a rather cumbersome subscript) on the unit disk $\C_q[D]$. For the holomorphic connection $\pdol_E$ (and thus the holomorphic structure) on $E=\Omega^{1,0}$ use 
\[
\Omega^{1,0} \stackrel{\pdol}\longrightarrow \Omega^{1,1} \stackrel{\wedge^{-1}}\longrightarrow 
\Omega^{0,1}\tens_{\C_q[D]} \Omega^{1,0}\ ,
\]
where we have used the fact that $\wedge:\Omega^{0,1}\tens_{\C_q[D]} \Omega^{1,0}\to \Omega^{1,1} $ is a bimodule isomorphism. 
To check that this is a left $\pdol$-connection, for $\xi\in \Omega^{1,0}$,
\[
\pdol_E(a.\xi)=\wedge^{-1}(\pdol a\wedge\xi+a.\pdol\xi)=\pdol a\tens\xi+a.\pdol_E(\xi)\ .
\]
The curvature of this $\pdol$-connection maps to $\Omega^{0,2}\tens_{\C_q[D]} \Omega^{1,0}$, and thus must be zero. 

We take the single basis element $\extd z$ on $E=\Omega^{1,0}$. 
As $\pdol_E(\extd z)=0$ we get $\Gamma_- =0$. Taking the invariant metric, $g^\bullet$ is a 1 by 1 matrix with the single element
\[
g^{1,1}=\<\extd z,\overline{\extd z}\>=\Y^2\ .
\]
Then 
\[
\Gamma_{+1,1}=-\,\pd(\Y^2)\,\Y^{-2}=\bar z\,\extd z\,[2]_{q^{-2}}\Y^{-1}\ ,
\]
so we have the Chern connection
\[
\nabla_E(\extd z)=-\bar z\,\extd z\,[2]_{q^{-2}}\Y^{-1}\tens \extd z\ 
\]
associated to the above hermitian metric. 

Now we shall consider the different bimodule, the sub-bimodule $S_+$ of the spinor bundle from Section~\ref{diskdir} generated by $s$. 
We define $\pdol_{\CS_+}(s)=0$ (i.e.\ $\Gamma_{-1,1}=0$), giving this sub-bimodule a holomorphic structure. We use (\ref{sideswitch}) to switch the sides of the previous spinor inner product:
\begin{align*}
\< a\,s,\overline{b\,s}\>=&\ \<\overline{\J(a\,s)},\J^{-1}(b\,s)\>=\delta^2\,\<\overline{\bar s\,a^*},\bar s\,b^*\>
=\delta^2\,\mu\,a\,w\,b^*\ .
\end{align*}
Following the previous method, we have $\<s,\bar s\>=\delta^2\,\mu\,w$, so $g^{11}=\delta^2\,\mu\,w$. Then the Chern connection has its other 
Christoffel symbol
\[
\Gamma_{+1,1}=-\pd(w)w^{-1}=\bar z.\extd z.w^{-1}\ .
\]
This is the natural Chern connection here but note that it is not the bimodule connection used for the construction of the Dirac operator.

\end{document}